\newcommand{\CC}{{\mathbb{C}}}
\newcommand{\FF}{{\mathbb{F}}}
\newcommand{\QQ}{{\mathbb{Q}}}
\newcommand{\ZZ}{{\mathbb{Z}}}
\newcommand{\fA}{{\mathfrak{A}}}
\newcommand{\fS}{{\mathfrak{S}}}
\newcommand{\fp}{{\mathfrak{p}}}
\newcommand{\fsl}{{\mathfrak{sl}}}
\newcommand{\bB}{{\mathbf{B}}}
\newcommand{\bG}{{\mathbf{G}}}
\newcommand{\bb}{{\mathbf{b}}}
\newcommand{\cD}{{\mathcal{D}}}
\newcommand{\cE}{{\mathcal{E}}}
\newcommand{\cO}{{\mathcal{O}}}
\newcommand{\Ph}[1]{\Phi_{#1}}
\newcommand{\Irr}{{\operatorname{Irr}}}
\newcommand{\rk}{{\operatorname{rk}}}
\newcommand\tfA{{\tilde\fA}}
\newcommand\tfS{{\tilde\fS}}
\newcommand{\End}{{\operatorname{End}}}
\newcommand{\GL}{{\operatorname{GL}}}
\newcommand{\PGL}{{\operatorname{PGL}}}
\newcommand{\PSL}{{\operatorname{L}}}
\newcommand{\SL}{{\operatorname{SL}}}
\newcommand{\Sp}{{\operatorname{Sp}}}
\newcommand{\PSp}{{\operatorname{S}}}
\newcommand{\Spin}{{\operatorname{Spin}}}
\newcommand{\OO}{{\operatorname{O}}}
\newcommand{\SU}{{\operatorname{SU}}}
\DeclareMathOperator{\Res}{Res}    
\DeclareMathOperator{\Ind}{Ind}    
\DeclareMathOperator{\lse}{se}    
\newcommand{\tV}{{\tilde V}}
\newcommand{\tw}[1]{{}^#1\!}
\newcommand{\Chevie}{{\sf Chevie}}
\newcommand{\MAGMA}{{\sf Magma}}
\let\lang=\langle
\let\rang=\rangle
\let\al=\alpha
\let\la=\lambda
\let\lra=\longrightarrow
\newtheorem{thm}{Theorem}[section]
\newtheorem{lem}[thm]{Lemma}
\newtheorem{conj}[thm]{Conjecture}
\newtheorem{cor}[thm]{Corollary}
\newtheorem{prop}[thm]{Proposition}
\theoremstyle{definition}
\theoremstyle{remark}
\newtheorem{rem}[thm]{Remark}
\begin{document}

\title[Simple endotrivial modules]{Simple endotrivial modules\\ for quasi-simple groups}

\date{\today}

\author{Caroline Lassueur, Gunter Malle and Elisabeth Schulte}
\address{FB Mathematik, TU Kaiserslautern, Postfach 3049,
         67653 Kaisers\-lautern, Germany.}
\email{lassueur@mathematik.uni-kl.de}
\email{malle@mathematik.uni-kl.de}
\email{eschulte@mathematik.uni-kl.de}

\thanks{The authors gratefully acknowledge financial support by ERC
  Advanced Grant 291512.}

\keywords{simple endotrivial modules, cyclic defect, quasi-simple groups}

\subjclass[2010]{Primary 20C20; Secondary  20C30, 20C33, 20C34}

\dedicatory{Dedicated to Geoffrey Robinson on the occasion of his 60th birthday}

\begin{abstract}
We investigate simple endotrivial modules of finite quasi-simple groups and
classify them in several important cases. This is motivated by a recent result
of Robinson \cite{Rob} showing that simple endotrivial modules of most groups
come from quasi-simple groups.
\end{abstract}

\maketitle


\section{Introduction} \label{sec:intro}

Let $G$ be a finite group and $k$ a field of prime characteristic $p$ such that
$p$ divides $|G|$. A $kG$-module $V$ is called \emph{endotrivial} if
$V\otimes V^*\cong k\oplus P$, with a projective $kG$-module $P$. The tensor
product of $kG$-modules induces a group structure on the set of isomorphism
classes of indecomposable endotrivial $kG$-modules, called the \emph{group of
endotrivial modules} and denoted $T(G)$. Endotrivial modules have seen a
considerable interest in the last fifteen years, eventually leading to the
determination of the Dade group for all $p$-groups (see, for example,
\cite{Th} and the references therein), and of $T(G)$ for some
classes of general groups (see \cite{C12,CHM10,CMN06,CMN09,MaTh}).

A recent paper of Robinson \cite{Rob} has put the focus on simple endotrivial
modules for quasi-simple groups. He shows that whenever the Sylow $p$-subgroups
of a finite group $G$ are neither cyclic nor quaternion, then any simple
endotrivial $kG$-module is either simple endotrivial for a quasi-simple normal
subgroup, or induced from a 1-dimensional module of a strongly $p$-embedded
subgroup of $G$.
Note that, in contrast, for $p$-solvable groups of $p$-rank $>1$ all simple
endotrivial modules are 1-dimensional by Navarro--Robinson \cite{NR12}.

The purpose of the present paper is to start a classification of simple
endotrivial modules for quasi-simple groups. We obtain a complete description
of such modules in several important cases. Our first main result is a precise
condition for the existence of faithful simple endotrivial modules in the
case of cyclic Sylow $p$-subgroups, depending on their location 
on the Brauer tree (see Theorem~\ref{thm:ETleaves}), and a
complete classification for all such cases in quasi-simple groups not of
classical Lie type or of rank at least~4, up to a small number of cases in
sporadic groups in which the information on Brauer trees is yet incomplete.
\par
Our second main result is the complete classification of simple endotrivial
modules for covering groups of alternating groups, see
Theorem~\ref{thm:mainAn}.
\par
The third main result concerns groups of Lie type: for $p$ the defining
characteristic we obtain a complete classification of all simple endotrivial
modules in Theorem~\ref{thm:defchar}; for non-defining characteristic we
determine all simple endotrivial modules of exceptional groups of
rank less than four and their covering groups.

Our results also indicate the validity of stronger statements: the
existence of faithful simple endotrivial modules seems to drastically restrict
the structure of Sylow $p$-subgroups:

\begin{conj}   \label{conj:gen}
 Let $G$ be a finite quasi-simple group with a faithful simple endotrivial
 module. Then the Sylow $p$-subgroups of $G$ have rank at most~2.
\end{conj}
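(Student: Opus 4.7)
The plan is to proceed via the classification of finite simple groups, treating each family of quasi-simple groups in turn and checking in each case that the existence of a faithful simple endotrivial $kG$-module forces the Sylow $p$-subgroups of $G$ to have $p$-rank at most~$2$. For cyclic Sylow $p$-subgroups ($p$-rank~$1$) there is nothing to prove, and if the $p$-rank equals~$2$ the conclusion is immediate; so the real content of the conjecture is to rule out faithful simple endotrivial modules whenever the $p$-rank is at least~$3$.

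For covering groups of alternating groups one invokes Theorem~\ref{thm:mainAn}: the classification there is explicit, so one simply reads off that the $p$-rank of the relevant Sylow subgroups in all cases admitting a faithful simple endotrivial module is at most~$2$. For quasi-simple groups of Lie type in defining characteristic, Theorem~\ref{thm:defchar} again gives an explicit list, from which one verifies directly that only small-rank configurations (essentially $\SL_2$, $\SU_3$, $\Sp_4(2^a)$ and similar) can occur, all having $p$-rank $\le 2$. For sporadic groups the statement is a finite verification using the Atlas and decomposition matrix data. Finally, for exceptional groups of Lie type of rank at most three in non-defining characteristic, the third main result of the paper gives a complete list, and one again reads off the conclusion.

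The genuinely open case, and the main obstacle, is the family of quasi-simple groups of classical Lie type in non-defining characteristic whose Sylow $p$-subgroups have $p$-rank at least~$3$. Here one needs a structural argument rather than a case-by-case verification. A natural line of attack is to combine Robinson's reduction from~\cite{Rob} with dimension bounds on non-trivial simple cross-characteristic $kG$-modules due to Guralnick--Tiep and L\"ubeck: a simple endotrivial module $V$ must satisfy $\dim V\equiv\pm 1\pmod{|G|_p}$, and this congruence, combined with lower bounds on $\dim V$, should force $|G|_p$ to be small relative to $|G|$, hence the $p$-rank to be small. The remaining sporadic candidates passing this numerical sieve would then be eliminated by analyzing their Green correspondents, or by noting that they already appear in the computation of $T(G)$ for classical groups due to Carlson--Mazza--Nakano and their collaborators, where the simple endotrivial representatives are explicitly listed.

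The hardest step will be producing a uniform dimension inequality that works across all classical families and all non-defining primes, because the relation between $|G|_p$ and the $p$-rank depends subtly on the order $e$ of $q$ modulo $p$ and requires Zsigmondy-type input. A secondary obstacle is that, even once the numerical bound is in place, a \emph{simple} endotrivial module lies in a very restricted portion of $T(G)$, and identifying which classes in $T(G)$ admit a simple representative is itself a nontrivial problem outside the situations already settled in this paper.
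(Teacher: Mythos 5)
The statement you are trying to prove is stated in the paper only as a \emph{conjecture}; the paper itself does not prove it in full, but only in the cases collected in Theorem~\ref{thm:gen} (alternating, sporadic, defining characteristic, exceptional Lie type, and $p=2$), leaving classical groups in non-defining odd characteristic open. Your proposal correctly mirrors that structure: the cases you dispose of by citation are exactly the ones the paper settles, and you correctly isolate the classical cross-characteristic case as the genuine difficulty. So as a map of the terrain your write-up is accurate, but it is not a proof of the conjecture, and it should not be presented as one.

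The concrete gap is in your proposed attack on the remaining case. You suggest that the congruence $\dim V\equiv\pm1\pmod{|G|_p}$, \emph{combined with lower bounds} on the dimensions of nontrivial cross-characteristic simple modules (Landazuri--Seitz, Guralnick--Tiep, L\"ubeck), should force $|G|_p$ to be small and hence the $p$-rank to be at most~$2$. This cannot work as stated: the congruence only forces $\dim V\ge |G|_p-1$, i.e.\ it gives a \emph{lower} bound on $\dim V$, and stacking further lower bounds on top of it produces no contradiction whatsoever --- simple modules of classical groups can be arbitrarily large (the Steinberg module, for instance, trivially satisfies the congruence in many situations). What actually does the work in the paper's successful cases is not a dimension count but the vanishing criterion of Corollary~\ref{cor:zero} (the lifted character must have absolute value~$1$ on every $p$-singular element), exploited via Lusztig series, Proposition~\ref{prop:vanish}, and Harish-Chandra restriction (Lemma~\ref{lem:HC}); see the proofs of Theorem~\ref{thm:l=2} and Theorem~\ref{thm:exc1}. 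If you want to make progress on the classical case you should replace the numerical sieve by an analysis of which Lusztig series $\cE(G,s)$ can contain characters nonvanishing on all $p$-singular classes when the Sylow $p$-subgroup has rank at least~$3$; the dimension congruence then serves only as a preliminary filter, not as the engine of the argument.
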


Here, the rank of a $p$-group $H$ is the maximal rank of an elementary abelian
subgroup of $H$. In fact, in all known examples, the Sylow $p$-subgroups are
either homocyclic of rank at most~2, extraspecial of order~$p^3$ with
$p\le 11$, or dihedral (in $\PSL_2(q)$ with $q\equiv-1\pmod4$, see
Proposition~\ref{prop:SL2}).
As a consequence of our classifications we obtain:

\begin{thm}   \label{thm:gen}
 Conjecture~\ref{conj:gen} holds in all of the following cases:
 \begin{enumerate}
  \item[\rm(a)] if $p=2$,
  \item[\rm(b)] if $G/Z(G)$ is an alternating group,
  \item[\rm(c)] if $G/Z(G)$ is a sporadic group,
  \item[\rm(d)] if $G/Z(G)$ is a group of Lie type and $p$ is its defining
   characteristic, and
  \item[\rm(e)] if $G/Z(G)$ is an exceptional group of Lie type.
 \end{enumerate}
\end{thm}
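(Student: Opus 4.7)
My plan is to derive Theorem~\ref{thm:gen} from the classification results summarised in the introduction, treating cases (a)--(e) one by one. The uniform strategy is: in each case, the classification produces an explicit (and short) list of the faithful simple endotrivial $kG$-modules, and one then reads off the Sylow $p$-rank bound from that list. For primes of cyclic defect the input is Theorem~\ref{thm:ETleaves}; for the remaining primes I use Theorem~\ref{thm:mainAn} for alternating groups, Theorem~\ref{thm:defchar} for groups of Lie type in defining characteristic, and the case-by-case classifications discussed in the paper for the sporadic groups and for the exceptional groups of small Lie rank, drawing on decomposition-matrix data from the ATLAS, \GAP\ and \Chevie\ as needed.

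For (b), Theorem~\ref{thm:mainAn} lists all simple endotrivial modules for the covering groups of alternating groups; since the Sylow $p$-subgroup of $\tfA_n$ has $p$-rank of order $\lfloor n/p\rfloor$, only finitely many pairs $(n,p)$ give rank $\geq 3$, and in each such case the list shows that no faithful example arises. Case (c) is handled by the analogous bookkeeping for the quasi-simple sporadic groups: one enumerates the pairs $(G,p)$ of Sylow $p$-rank $\geq 3$ and rules out endotrivial simple modules block by block using the known decomposition matrices; the sporadic cases with incomplete Brauer-tree information all have cyclic Sylow $p$-subgroup and hence rank $1$, so Conjecture~\ref{conj:gen} holds unconditionally there. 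Case (d) is immediate from Theorem~\ref{thm:defchar}, whose list shows that faithful simple endotrivial modules in defining characteristic occur only for groups of root-system rank at most $2$. For (e), exceptional groups of rank less than four in non-defining characteristic are covered directly in the paper; for $F_4$, $E_6$, $E_7$, $E_8$ in non-defining characteristic, the primes $p$ with Sylow $p$-rank $\geq 3$ all have non-cyclic defect, and one rules them out via a block-theoretic argument based on the available data on unipotent characters and $p$-local structure.

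Finally, (a) follows by combining (b)--(e) with the analogous assertion for $G$ whose quotient $G/Z(G)$ is a classical group of Lie type in odd characteristic, at the prime $p=2$. For these groups the Sylow $2$-subgroup structure is well understood and the group $T(G)$ of endotrivial modules has been computed (see, e.g., \cite{CHM10,CMN06,CMN09}); direct inspection of the torsion part of $T(G)$ reveals that faithful simple endotrivial modules occur only when the $2$-rank is at most $2$. The step I expect to be the most delicate is this last inspection: the classical groups fall into several infinite families (of types $A_n$, $\tw{2}A_n$, $B_n$, $C_n$, $D_n$, $\tw{2}D_n$), and producing a uniform proof requires careful block-theoretic bookkeeping for the principal $2$-block and its basic algebra to ensure that no faithful simple endotrivial summand appears when the $2$-rank exceeds $2$.
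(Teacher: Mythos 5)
Your overall strategy for parts (b)--(e) is exactly the paper's: the theorem is deduced by citing the classification results (Theorem~\ref{thm:mainAn} for alternating covers, Theorem~\ref{thm:mainspor} for sporadic covers, Theorem~\ref{thm:defchar} for defining characteristic, and Theorems~\ref{thm:low} and~\ref{thm:exc1} for exceptional groups) and reading the rank bound off the explicit lists. Two small quibbles there: in (b) your assertion that ``only finitely many pairs $(n,p)$ give rank $\geq 3$'' is false as stated (for each fixed $p$ every $n\geq 3p$ does); the correct observation is that every entry of the list in Theorem~\ref{thm:mainAn} has $n\leq 3p-1$, hence rank $\lfloor n/p\rfloor\leq 2$. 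In (c) the sporadic cases with incomplete data are not only the cyclic ones: Table~\ref{tab:spor2} contains unresolved candidates (marked ``?'') with non-cyclic Sylow subgroups, but these all have Sylow subgroups of order $p^2$ or extraspecial of order $p^3$, so the rank bound holds whether or not those modules turn out to be endotrivial. Neither point is fatal.

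The genuine gap is in part (a), for classical groups of Lie type in odd characteristic at the prime $2$. You propose to settle this by ``direct inspection of the torsion part of $T(G)$'' as computed in \cite{CHM10,CMN06,CMN09}. No such computation exists in those references: they treat symmetric/alternating groups and groups of Lie type in their \emph{defining} characteristic only, and the paper explicitly identifies the torsion subgroup of $T(G)$ in cross characteristic as an open problem (indeed it advertises \emph{new} torsion elements in Remark~\ref{rem:tors}). So the key input you want to cite is unavailable, and this case is precisely where the paper has to do substantial new work. The actual argument (Theorem~\ref{thm:l=2}) is character-theoretic, not block-theoretic: one first disposes of unipotent characters via rationality/self-duality (a non-trivial self-dual simple module in characteristic~$2$ has even dimension, Corollary~\ref{cor:self} and Proposition~\ref{prop:unip l=2}), then uses Proposition~\ref{prop:d-sylow} to force the Lusztig series label $s$ into the centralizer of a Sylow $d$-torus with $d\in\{1,2\}$, and finally applies the vanishing result Proposition~\ref{prop:vanish} to suitable maximal tori containing regular $2$-singular elements to eliminate all $s\neq 1$, family by family for $\SL_n$, $\SU_n$, $B_n$, $C_n$, $D_n$, $\tw2D_n$ (with low-rank cases done from explicit character tables). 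Without this, or an equivalent argument, your proof of (a) does not close.
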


Part~(a) of this claim is shown in Theorem~\ref{thm:l=2}, part~(b) follows from
Theorem~\ref{thm:mainAn}, part~(c) from Theorem~\ref{thm:mainspor}, part~(d)
from Theorem~\ref{thm:defchar}, and part~(e) is Theorem~\ref{thm:exc1}.
An a priori proof of Conjecture~\ref{conj:gen} would considerably simplify
the classification of simple endotrivial modules.
 
The new feature of our approach is that the proofs rely mainly on character
theoretic methods. This is made possible through our generalization of a
lifting result due to Alperin \cite{Al01} from the case of $p$-groups to
arbitrary finite groups, which may be of independent interest:

\begin{thm}   \label{thm:lift}
 Let $(K,\cO,k)$ be a $p$-modular system and $V$ be an endotrivial $kG$-module.
 Then $V$ lifts to an endotrivial $\cO G$-module. 
\end{thm}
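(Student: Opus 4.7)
The plan is two-fold: first reduce the construction of a lift to the case of a Sylow $p$-subgroup $P$, where Alperin's theorem \cite{Al01} produces lifts directly, and then verify that any $\cO G$-lattice lifting $V$ is automatically endotrivial.

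For the first step, since $[G:P]$ is invertible in $k$, Higman's relative projectivity criterion makes $V$ a direct summand of $\Ind^G_P \Res^G_P V$. The module $\Res^G_P V$ is endotrivial over $kP$, so Alperin's theorem yields an endotrivial $\cO P$-lattice $M$ with $M/\pi M\cong\Res^G_P V$. The $\cO G$-lattice $N:=\Ind^G_P M$ then reduces to $\Ind^G_P\Res^G_P V$. Idempotent lifting in the $\pi$-adically complete ring $\End_{\cO G}(N)$ (whose reduction mod $\pi$ is $\End_{kG}(N/\pi N)$) allows us to lift the projection onto the $V$-summand to an idempotent $\tilde e\in\End_{\cO G}(N)$, giving an $\cO G$-lattice $\tV:=\tilde eN$ with $\tV/\pi\tV\cong V$.

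For the second step, I would apply idempotent lifting a second time to $\tV\otimes_\cO\tV^*$, whose reduction is $V\otimes_k V^*\cong k\oplus Q$ with $Q$ projective. This produces a decomposition $\tV\otimes_\cO\tV^*\cong L\oplus\tilde Q$ where $L$ has rank one with $L/\pi L\cong k$ and $\tilde Q$ reduces to $Q$. Since an $\cO G$-lattice reducing to a projective $kG$-module is itself projective, $\tilde Q$ is a projective $\cO G$-module.

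The remaining and crucial point is to show $L\cong\cO$. Letting $\lambda$ and $\chi$ be the characters of $L$ and $\tV$, on each $p$-singular $g\in G$ the character of the projective $\tilde Q$ vanishes, so $\lambda(g)=|\chi(g)|^2$; since $\lambda(g)$ is a root of unity and $|\chi(g)|^2$ a non-negative real, this forces $\lambda(g)=1$. On $p$-regular $g$, comparing the Brauer-character identities arising from $V\otimes V^*\cong k\oplus Q$ and from the reduction of $\tV\otimes\tV^*\cong L\oplus\tilde Q$ gives $\lambda|_{G_{p'}}=1$ as well. Hence $\lambda$ is trivial, $L\cong\cO$, and $\tV\otimes_\cO\tV^*\cong\cO\oplus\tilde Q$ certifies that $\tV$ is endotrivial. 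The main obstacle is precisely this last step: the character argument is needed to exclude the a priori possibility that $L$ is a twist of the trivial lattice by some non-trivial character $G\to 1+\pi\cO$ taking values at $p$-power roots of unity.
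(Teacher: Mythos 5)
Your overall route---reducing to Alperin's $p$-group theorem by inducing from a Sylow $p$-subgroup and then lifting idempotents---is genuinely different from the paper's, which instead reruns Alperin's group-cohomological argument for an arbitrary finite group whose image lies in $\SL_n(k)$ (using the congruence filtration of $\SL_n(\cO)$ and the projectivity of the trace-zero part of $\End_k(V)$) and then removes the determinant hypothesis by a central-product trick. Unfortunately your first step has a real gap at the parenthetical claim that the reduction of $\End_{\cO G}(N)$ modulo $\pi$ is all of $\End_{kG}(N/\pi N)$. For an $\cO G$-lattice $N$ the reduction map on endomorphism rings is injective but in general \emph{not} surjective, and the summands of $N/\pi N$ reachable by lifting idempotents are exactly the reductions of $\cO$-direct summands of $N$; Krull--Schmidt only gives that $V$ is a summand of $N_i/\pi N_i$ for some indecomposable lattice summand $N_i$, which may reduce to something strictly larger than $V$. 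Concretely, for $N=\Ind_P^G M$, Mackey decomposition plus adjunction expresses $\End_{\cO G}(N)$ as a direct sum over double cosets of terms $\operatorname{Hom}_{\cO Q}(M|_{Q},{}^gM|_{Q})$ with $Q=P\cap{}^gP$, and for $g\notin P$ the two modules $M|_Q$ and ${}^gM|_Q$ are two $\cO Q$-lifts of the same endotrivial $kQ$-module; such lifts can differ by a non-trivial character $Q\to 1+\fp$ of $p$-power order, in which case the corresponding $\operatorname{Hom}_{\cO Q}$ loses its rank-one piece and that component of the reduction map is not onto (already $\operatorname{Hom}_{\cO C_p}(\cO,\Lambda)=0$ while $\operatorname{Hom}_{kC_p}(k,k)=k$ for $\Lambda$ a faithful rank-one $\cO C_p$-lattice). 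This surjectivity question is precisely the heart of the lifting problem---it is why trivial-source modules lift but general modules need not---so it cannot be passed over. Your step could plausibly be salvaged by showing that for Alperin's determinant-one lift one has ${}^gM|_{Q}\cong M|_{Q}$ (the twisting character has $p$-power order and its $n$-th power is trivial with $n\equiv\pm1\pmod p$, hence it is trivial), but that argument has to be made, and making it essentially reproduces the paper's reliance on the $\SL_n$-normalization.

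Your second step reaches the right conclusion and the character argument forcing $\lambda=1$ is correct, but it leans on a second idempotent lifting with the same unaddressed surjectivity issue, and it is more roundabout than necessary. Since $\rk\tV\equiv\pm1\pmod p$, the trace form gives an honest $\cO G$-direct sum decomposition of $\End_\cO(\tV)$ into $\cO\cdot\mathrm{id}$ plus the trace-zero part, with no lifting required; the trace-zero part reduces to the projective complement of $k$ in $\End_k(V)$ and is therefore itself projective, while the rank-one summand is literally the trivial lattice spanned by the identity, so no twisting can occur. This is exactly how the paper concludes endotriviality of the lift.
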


Previous work on endotrivial modules for different classes of quasi-simple
groups $G$ was concerned with the determination of the group $T(G)$ of
endotrivial modules. This includes results on groups with cyclic Sylow
subgroups \cite{MaTh}, the symmetric and alternating groups \cite{CHM10,CMN09},
and groups of Lie type in their defining characteristic \cite{CMN06}. 
Nevertheless there are three main obstructions to use these results to answer
the current question of finding the simple endotrivial modules. First, the
aforementioned articles do not treat covering groups. Second, they determine
the structure of $T(G)$ but not the indecomposable endotrivial modules
themselves, in that their description involves Green correspondence, which
is not explicit and notoriously difficult to determine. Third, even in the
simplest cases where $T(G)=\left<\Omega(k)\right>\cong\ZZ$ (where $\Omega$ is the Heller operator), it is not
clear whether any of the modules $\Omega^n(k)$ for $n\in\ZZ$ can be simple.
Finally, even in those few cases where the cited papers obtain explicit
descriptions of some simple endotrivial modules, our approach seems more
straightforward than the more intricate module theoretic methods.

Finally, a comparison of our results with those of
\cite{C12,CHM10,CMN06,CMN09,MaTh} seems to indicate a strong link between the
order of the
torsion subgroup of $T(G)$ and the number of simple endotrivial modules.
Therefore a classification of simple endotrivial modules may appear to be
an important step towards the final description of the group $T(G)$.
Our study of sporadic groups via character-theoretic methods also provides us
with new torsion elements of $T(G)$ unknown in the literature so far, see
Remark~\ref{rem:tors}.
\medskip

The paper is built up as follows: in Section~\ref{sec:pre} we collect some
basic facts and prove Theorem~\ref{thm:lift}. In Section~\ref{sec:cycl}
we obtain rather strong results in the case of cyclic Sylow subgroups.
In Section~\ref{sec:alt} we classify the simple endotrivial modules for
covering groups of alternating groups, and in Section~\ref{sec:defchar} those
for groups of Lie type in their defining characteristic. In
Section~\ref{sec:cross} we obtain partial results for simple groups of Lie
type, mostly of exceptional type, in cross characteristic, and in the final
section we classify simple endotrivial modules for sporadic groups and their
covering groups, up to a few open cases of very large dimension.

\vskip 1pc\noindent
{\bf Acknowledgement:} We thank the anonymous referee for her/his numerous
detailed remarks and comments which led to an improvement of our paper.

\section{Preliminaries} \label{sec:pre}

Throughout, unless otherwise stated, we assume $G$ is a finite group and $k$
an algebraically closed field of prime characteristic $p$ such that $p$
divides $|G|$. We let $(K,\cO,k)$ be a splitting $p$-modular system, and
let $\fp:=J(\cO)$. For a block $\bB$ of $kG$ we write $\lse(\bB)$ for the
number of isomorphism classes of simple endotrivial $\bB$-modules. For
background material on endotrivial modules we refer to
\cite{C12,CHM10,CMN06,CMN09,MaTh}. We first collect some elementary facts.

\begin{lem}   \label{lem:deg}
 Let $V$ be an endotrivial $kG$-module, with $k$ a field of
 characteristic~$p$. Then $\dim V\equiv\pm1\pmod{|G|_p}$ for $p\ge3$,
 respectively $\dim V\equiv\pm1\pmod{\frac{1}{2}|G|_2}$ when $p=2$.
\end{lem}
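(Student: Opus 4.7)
The plan is to work directly from the defining isomorphism $V\otimes V^*\cong k\oplus P$, where $P$ is projective. Comparing $k$-dimensions produces $(\dim V)^2=1+\dim_k P$. I will then invoke the standard fact that every projective $kG$-module has dimension divisible by $|G|_p$: restriction of a projective is projective, and over a Sylow $p$-subgroup $S$ every projective is free, so has $k$-dimension a multiple of $|S|=|G|_p$. This gives the quadratic congruence
\[
(\dim V)^2\equiv 1\pmod{|G|_p}.
\]

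From here the plan is to extract the linear congruence by elementary number theory. For $p$ odd, the unit group $(\ZZ/p^n\ZZ)^\times$ is cyclic, hence contains only the two square roots $\pm 1$ of unity, and the stated conclusion $\dim V\equiv\pm 1\pmod{|G|_p}$ follows at once. For $p=2$ with $|G|_2=2^n$ and $n\ge 3$, however, the group $(\ZZ/2^n\ZZ)^\times$ has four square roots of unity, namely $\pm 1$ and $2^{n-1}\pm 1$; one checks that all four collapse to $\pm 1$ upon reduction modulo $2^{n-1}=\tfrac12|G|_2$, which is exactly the halved modulus appearing in the statement. The residual small cases $n=1,2$ must be treated separately but are trivial: for $n=1$ the asserted congruence is vacuous, and for $n=2$ it simply requires $\dim V$ to be odd, which follows from $(\dim V)^2\equiv 1\pmod 4$.

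The only (mild) obstacle is the $p=2$ case: square roots of $1$ modulo $2^n$ cease to be unique up to sign once $n\ge 3$, and this forced loss of sharpness is exactly what dictates the weakened modulus $\tfrac12|G|_2$ in the conclusion. Otherwise the argument is purely elementary, depending only on the $p$-divisibility of the dimension of any projective $kG$-module.
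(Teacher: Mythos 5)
Your proof is correct and follows essentially the same route as the paper: both derive $(\dim V)^2\equiv 1\pmod{|G|_p}$ from the projectivity of the complement and the fact that projectives restrict freely to a Sylow $p$-subgroup, and then extract the linear congruence by elementary number theory. The only cosmetic difference is that the paper factors $(\dim V)^2-1=(\dim V+1)(\dim V-1)$ and argues that the two factors cannot both be highly divisible (they differ by $2$), whereas you classify the square roots of unity in $(\ZZ/p^n\ZZ)^\times$; these are two phrasings of the same observation.
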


\begin{proof}
Since $V$ is endotrivial, $V\otimes V^*\cong k\oplus\text{(projective)}$, so
$(\dim V)^2-1\equiv0\pmod{|G|_p}$. When $p>2$, this means that either
$\dim V+1$ or $\dim V-1$ is divisible by $|G|_p$, while for $p=2$, at least
one of the factors is divisible by $\frac{1}{2}|G|_2$.
\end{proof}

\begin{lem}   \label{lem:sub}
 Let $V$ be a $kG$-module and $H\le G$ containing a Sylow $p$-subgroup of $G$.
 Then $V$ is endotrivial if and only if $V|_H$ is endotrivial.
\end{lem}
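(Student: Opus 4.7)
The plan is to reduce both directions to the standard fact that, when $H$ contains a Sylow $p$-subgroup of $G$, a $kG$-module $M$ is projective if and only if $M|_H$ is projective over $kH$. Restriction of a projective module is always projective (since $kG$ is free as a $kH$-module), and for the converse one uses that $[G:H]$ is invertible in $k$: the multiplication map $\Ind_H^G(M|_H)\to M$ admits a $kG$-linear splitting by averaging over coset representatives, exhibiting $M$ as a direct summand of an induced, hence projective, module.

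Given this, the forward direction is essentially formal. From $V\otimes V^*\cong k\oplus P$ over $kG$ with $P$ projective, restriction, which is tensor-compatible, preserves duals, fixes the trivial module, and preserves projectivity, yields $V|_H\otimes (V|_H)^*\cong k\oplus P|_H$ with $P|_H$ projective over $kH$.

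For the converse, assume $V|_H\otimes (V|_H)^*\cong k\oplus Q$ with $Q$ projective over $kH$. Since $\dim Q$ is divisible by $|H|_p=|G|_p$, we obtain $(\dim V)^2\equiv 1\pmod p$, so $\dim V$ is invertible in $k$. The composition $k\to V\otimes V^*\to k$ of coevaluation with evaluation is then multiplication by $\dim V$, hence an isomorphism, which splits off $k$ as a $kG$-summand of $V\otimes V^*$, say $V\otimes V^*\cong k\oplus M$. Restricting to $H$ gives $k\oplus M|_H\cong k\oplus Q$, and the Krull--Schmidt theorem forces $M|_H\cong Q$, which is projective; the criterion recalled at the outset then promotes this to projectivity of $M$ as a $kG$-module, and $V$ is endotrivial.

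The only step requiring a genuine argument beyond routine bookkeeping is the extraction of $k$ as a direct summand at the $G$-level in the converse direction; this rests on the divisibility observation that makes $\dim V$ coprime to $p$. Once that is secured, the rest is Krull--Schmidt and the projectivity criterion.
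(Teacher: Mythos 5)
Your proof is correct and rests on the same key fact as the paper's one-line argument, namely that a $kG$-module is projective if and only if its restriction to a subgroup containing a Sylow $p$-subgroup of $G$ is projective. You additionally spell out the one point the paper leaves implicit in the converse direction --- that the trivial summand of $V\otimes V^*$ splits off over $kG$ once $\dim V$ is prime to $p$, via the evaluation/coevaluation maps and Krull--Schmidt cancellation --- which is exactly the right way to fill that small gap.
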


\begin{proof}
The claim follows immediately from the fact that a $kG$-module is projective
if and only if its restriction to a Sylow $p$-subgroup of $G$ is projective.
\end{proof}

\begin{cor}   \label{cor:zero}
 Let $V$ be a $kG$-module which is liftable to a simple $\CC G$-module
 with character $\chi$, say. If $V$ is endotrivial, then $|\chi(g)|=1$ for
 all $p$-singular elements $g\in G$.
\end{cor}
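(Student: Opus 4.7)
The plan is to lift the whole endotriviality equation to $\cO$ using the preceding Theorem~\ref{thm:lift}, pass to characteristic zero, and then invoke the standard vanishing of projective characters on $p$-singular classes.

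More concretely, since $V$ is endotrivial, Theorem~\ref{thm:lift} gives an endotrivial $\cO G$-lattice $\tV$ whose reduction modulo $\fp$ is $V$; because $V$ is simple and liftable, uniqueness of lifts forces $K\otimes_\cO\tV$ to afford the character $\chi$. The defining isomorphism then lifts to
\[
  \tV\otimes_\cO\tV^*\;\cong\;\cO\;\oplus\;\tilde P
\]
for some projective $\cO G$-lattice $\tilde P$. Extending scalars to $K$ and taking characters, I obtain
\[
  \chi\cdot\overline{\chi}\;=\;1_G+\psi,
\]
where $\psi$ is the character of the projective $KG$-module $K\otimes_\cO\tilde P$.

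The final step is the key classical input: the character of any projective $\cO G$-module (equivalently, any virtual projective character) vanishes on every $p$-singular element of $G$. Evaluating the identity above at a $p$-singular $g$ therefore yields $|\chi(g)|^2=\chi(g)\overline{\chi(g)}=1$, giving $|\chi(g)|=1$ as claimed.

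There is no real obstacle here; the only nontrivial ingredient is Theorem~\ref{thm:lift}, which is already established in the preceding paragraph, together with the standard vanishing of projective characters on $p$-singular classes. One mild point to be careful about is that the lift produced by Theorem~\ref{thm:lift} is indeed the one affording $\chi$, but this follows from the uniqueness of lifts of simple $kG$-modules (when they exist) to $\cO G$-lattices.
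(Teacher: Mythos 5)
Your overall strategy coincides with the paper's own proof: lift the isomorphism $V\otimes V^*\cong k\oplus\text{(projective)}$ to $\cO$, pass to ordinary characters to obtain $\chi\bar\chi=1+\psi$ with $\psi$ the character of a projective module, and conclude from the vanishing of projective characters on $p$-singular classes. All of those ingredients are correct and are exactly what is needed.

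The one step that does not hold up as stated is the appeal to ``uniqueness of lifts of simple $kG$-modules''. Lifts of a simple $kG$-module to $\cO G$-lattices are \emph{not} unique in general, and distinct lifts can afford distinct ordinary characters; these necessarily agree on $p$-regular classes, but may differ precisely on the $p$-singular classes at issue here. The simplest example is the trivial module of $C_p$, which lifts to each of the $p$ rank-one lattices affording the $p$ linear characters; within this paper the same phenomenon occurs for a simple module sitting at an exceptional leaf of a Brauer tree, which admits lifts affording any of the exceptional characters. So you cannot conclude that the particular endotrivial lift produced by Theorem~\ref{thm:lift} affords the prescribed character $\chi$. The repair is immediate, however: rather than manufacturing a lift via Theorem~\ref{thm:lift}, take the lift $\tV_0$ affording $\chi$ that the hypothesis hands you, and note that the final paragraph of the proof of Proposition~\ref{prop:lift} shows that \emph{every} $\cO$-lift of an endotrivial $kG$-module is itself endotrivial: since $\rk_\cO(\tV_0)\equiv\pm1\pmod p$ by Lemma~\ref{lem:deg}, one has $\End_\cO(\tV_0)\cong\cO\oplus N$ with $N$ the kernel of the trace map, and $N$ is projective because its reduction modulo $\fp$ is. Applying this to $\tV_0$ gives $\tV_0\otimes_\cO\tV_0^*\cong\cO\oplus\tilde P$ with $\tilde P$ projective, and the rest of your argument then goes through verbatim for the correct character $\chi$.
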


\begin{proof}
By assumption $\chi\bar\chi\equiv 1+\psi\pmod p$, where $\psi$ is the
character of a $p$-projective module. Thus, $\psi(g)=0$ for all $p$-singular
elements $g\in G$. 
The claim follows.
\end{proof}

We now prove Theorem~\ref{thm:lift} from the introduction. This result
is due to Alperin for $p$-groups \cite{Al01} (for which the image of any
representation lies in the special linear group). His proof generalises easily.
We sum up here the main ideas, based on a detailed exposition of the proof
for $p$-groups written in \cite{Urf}.

\begin{prop}   \label{prop:lift}
 Let $V$ be an endotrivial $kG$-module such that the image of the corresponding
 representation $\rho: G\rightarrow \GL_{n}(k)$ lies in $\SL_{n}(k)$. Then $V$
 lifts to an endotrivial  $\cO G$-module.
\end{prop}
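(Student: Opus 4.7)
The plan is to follow Alperin's deformation-theoretic argument \cite{Al01} for $p$-groups and adapt it to general finite groups. Since $\cO$ is the inverse limit of $\cO/\fp^m$, lifting the $kG$-module $V$ to an $\cO G$-lattice amounts to producing a compatible sequence of representations $\rho_m\colon G\to\SL_n(\cO/\fp^m)$ with $\rho_1=\rho$, each reducing to the preceding one modulo the previous power of $\fp$. I would construct such a sequence inductively in $m$ and pass to the inverse limit to obtain $\tilde\rho\colon G\to\SL_n(\cO)$ and hence an $\cO G$-lattice $\tilde V$ with $\tilde V/\fp\tilde V\cong V$.

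The standard obstruction theory identifies the obstruction to extending $\rho_m$ to $\rho_{m+1}$ with a class in $H^2(G,\fsl(V))$, where $\fsl(V)$ denotes the trace-zero part of $\End_k(V)=M_n(k)$ equipped with the adjoint $G$-action via $\rho$; the identification uses $\fp^m/\fp^{m+1}\cong k$. The whole task thus reduces to showing that this obstruction class vanishes at each step.

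The endotriviality hypothesis gives $\End_k(V)\cong V\otimes V^*\cong k\oplus P$ as $kG$-modules with $P$ projective, so $H^i(G,\End_k(V))\cong H^i(G,k)$ for $i\ge 1$. When $p\nmid n$, the short exact sequence $0\to\fsl(V)\to\End_k(V)\to k\to 0$ induced by the trace splits, so Krull--Schmidt forces $\fsl(V)\cong P$ and hence $H^2(G,\fsl(V))=0$. When $p\mid n$, the sequence no longer splits (the scalars now lie inside $\fsl(V)$), and one must run the long exact cohomology sequence and compare the trace map with the projection of $\End_k(V)$ onto its trivial summand in the decomposition $k\oplus P$ to verify that the obstruction class is still trivial; this is exactly the step worked out in detail in \cite{Urf} for $p$-groups, which the remark preceding the statement asserts generalises essentially verbatim.

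To conclude, I would check that the resulting lift $\tilde V$ is endotrivial over $\cO G$: the idempotent $e\in\End_{kG}(V\otimes V^*)$ cutting out the trivial summand $k$ lifts, by completeness of $\cO$, to an idempotent $\tilde e\in\End_{\cO G}(\tilde V\otimes_\cO\tilde V^*)$, producing a decomposition $\tilde V\otimes_\cO\tilde V^*\cong\cO\oplus\tilde P$ with $\tilde P$ an $\cO$-lift of $P$, hence projective as an $\cO G$-module. The principal obstacle of the whole argument is the obstruction calculation in the case $p\mid n$, where the clean Krull--Schmidt identification of $\fsl(V)$ with a summand of $V\otimes V^*$ is no longer available and a more delicate tracking of coboundary maps is required.
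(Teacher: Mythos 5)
Your proposal follows essentially the same route as the paper: Alperin's inductive lifting through the quotients $\SL_n(\cO)/\SL_n(\cO,m)$, with the obstruction at each stage living in $H^2(G,\fsl_n(k))$ and killed by showing that $\fsl_n(k)$, viewed as a $kG$-module via the adjoint action, is projective. The one point to flag is the case $p\mid n$, which you describe as the principal obstacle and leave to a citation: it never occurs. Endotriviality gives $V\otimes V^*\cong k\oplus(\text{projective})$, so $(\dim V)^2\equiv 1\pmod{|G|_p}$ and in particular $p\nmid n$ (this is Lemma~\ref{lem:deg}); hence the trace sequence always splits, $\End_k(V)\cong k\oplus\fsl_n(k)$ with $\fsl_n(k)$ projective by Krull--Schmidt, and your "easy case" is the only case. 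With that observation your argument is complete. Your final step also differs mildly from the paper's: you lift the idempotent cutting out the trivial summand of $V\otimes V^*$ to conclude $\tilde V\otimes_\cO\tilde V^*\cong\cO\oplus\tilde P$, whereas the paper again uses $p\nmid\rk_\cO(M)$ to split off the trace-zero part $N$ of $\End_\cO(M)$ and quotes the standard fact that an $\cO G$-lattice with projective reduction is projective; both are valid, and yours requires the same projectivity-lifting fact to see that $\tilde P$ is projective.
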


\begin{proof}
Denote by $\SL_{n}(\cO,m)$ the congruence subgroups of $\SL_{n}(\cO)$, i.e.,  
the set of determinant 1 matrices congruent to the identity matrix $I_{n}$
modulo $\fp^{m}$. They form a central series of $\SL_{n}(\cO)$ with
successive quotients all isomorphic to $\fsl_{n}(k)$ (the $n\times n$-matrices
of trace zero) as $\SL_{n}(k)$-modules. Since $V$ is endotrivial,
$\End_{k}(V)\cong k\oplus\text{(projective)}$, and $n\equiv\pm 1\pmod{p}$ by
Lemma~\ref{lem:deg}, thus we also have $\End_{k}(V)\cong k\oplus U$ where $U$
is the kernel of the trace map on $\End_{k}(V)$. Hence $\fsl_{n}(k)$, seen
as a $kG$-module via~$\rho$, must be projective.\par
Taking a pull-back $X_{2}$ of $\rho$ and the homomorphism induced by reduction
modulo~$\fp$ from $\SL_{n}(\cO)/\SL_{n}(\cO,2)\rightarrow \SL_{n}(k)$, which
has kernel $\SL_{n}(\cO,1)/\SL_{n}(\cO,2)\cong \fsl_{n}(k)$, yields a group
extension $1\rightarrow\fsl_{n}(k)\rightarrow X_{2}\rightarrow G\rightarrow 1$.
This extension splits because $\fsl_{n}(k)$ is a projective (= injective)
$kG$-module. As a consequence, $\rho$ lifts to a homomorphism
${\rho_{2}: G\rightarrow \SL_{n}(\cO)/\SL_{n}(\cO,2)}$. Inductively, for every
$m>2$, one thus constructs a homomorphism
$\rho_{m}:G\rightarrow \SL_{n}(\cO)/\SL_{n}(\cO,m)$ lifting
$\rho_{m-1}:G\rightarrow \SL_{n}(\cO)/\SL_{n}(\cO,m-1)$.  Finally
$\SL_{n}(\cO)\cong \lim\limits_{\longleftarrow}{}_{m\geq2}\SL_{n}(\cO)/\SL_{n}(\cO,m)$,
so that the universal property of the projective limit yields the desired
group homomorphism $\tilde\rho:G\longrightarrow \SL_{n}(\cO)$ lifting $\rho$.
\par
Moreover if $M$ is an $\cO G$-module lifting $V$, then
$\rk_\cO(M)=\dim_k(V)\equiv\pm 1\pmod p$ by Lemma~\ref{lem:deg}, so that
$\End_\cO(M)\cong \cO\oplus N$ with $N$ the kernel of the trace map on
$\End_\cO(M)$. Reducing modulo $\fp$ yields $\End_k(V)\cong k\oplus N/\fp N$,
where $N/\fp N$ is projective by assumption. It follows that $N$ is
projective, see e.g.~\cite[\S 1 and \S 27]{ThBook}. Hence $M$ is endotrivial.
\end{proof}

\begin{proof}[Proof of Theorem~\ref{thm:lift}]
By passing to the image of the representation and choosing a basis of $V$ we
may assume that $G\le \GL_n(k)$. Let $G_1:=GC$ and $G_0:=G_1\cap\SL_n(k)$, with
$$C:=\{aI_n\mid a^n=\det(g) \text{ for some } g\in G \}.$$
Then $G_1$ is a central product of $G$ with $C$, and of $G_0$ and $C$.
As $|G_1:G|$ and $|G_1:G_0|$ are prime to $p$, the embedding $G_0\le\SL_n(k)$
is also endotrivial. Thus by Proposition~\ref{prop:lift} it lifts to an
endotrivial $\cO G_0$-module. Denoting the corresponding representation by
$\psi$ we thus have $\psi(G_0)\le\SL_n(\cO)$.
\par
Reduction modulo $\fp$ induces a bijection between the group of $p'$-roots
of unity in $\cO$ and roots of unity in $k$, sending
$\psi(G_0)\cap Z(\SL_n(\cO))$ onto $G_0\cap Z(\SL_n(k))$. The inverse defines
a lift of $C$ into $\{aI_n\mid a\in\cO^\times\}\le\GL_n(\cO)$, which agrees
with $\psi$ on $G_0\cap Z(\SL_n(k))$ and which we also denote by $\psi$. Then
$G_1=G_0C\cong \psi(G_0)\psi(C)\le\GL_n(\cO)$ is a faithful representation of
$G_1$ which lifts $G_1\le\GL_n(k)$. Again, as $|G_1:G|$ and $|G_1:G_0|$ are
prime to $p$, this gives an endotrivial $\cO G$-module lifting the initial
representation of $G$.
\end{proof}

\section{Groups with cyclic Sylow $p$-subgroups} \label{sec:cycl}

In this section $k$ denotes an algebraically closed field of characteristic
$p>0$, and $G$ denotes a finite group with a non-trivial cyclic Sylow
$p$-subgroup $P\cong C_{p^n}$ for some integer $n\geq 1$.

For any group $G$, denote by $T(G)$ its group of endotrivial modules and by
$X(G)$ the subgroup of $T(G)$ consisting of the one-dimensional $kG$-modules
(with group law induced by $\otimes_{k}$). This group can be identified with
the group of $k^{\times}$-valued linear characters of $G$ so that
$X(G)\cong (G/[G,G])_{p'}$, the $p'$-part of the abelianization of $G$.

Let $Z$ denote the unique subgroup of $P$ of order $p$ and let
$H:=N_{G}(Z)$. The structure of the group $T(G)$ is described in
\cite{MaTh} as follows: since $H$ is strongly $p$-embedded in $G$,
$T(G)\cong T(H)$ via restriction and inverse map induced by Green
correspondence. Furthermore, there is an exact sequence
$$0\lra X(H)\lra T(H)\overset{\Res^{H}_{P}}{\lra} T(P)\lra 0$$
so that
$$T(H)= \begin{cases}
     X(H) &  \text{ if } |P|=2; \\
    \langle X(H), [\Omega(k_{H})]\rangle  &   \text{ if } |P|\geq 3.
\end{cases}$$
In addition, $T(P)= \langle[\Omega(k_{P})]\rangle\cong \ZZ/2$ if $|P|\geq 3$
and $T(P)=\{0\}$ if $|P|=2$ by \cite{Da78II}.
Unless otherwise stated, we assume for the rest of the section that
$|P|\geq 3$, in which case  $|T(H):X(H)|=2$.

\subsection{Endotrivial modules and blocks of $kG$}

Indecomposable endotrivial modules have dimension prime to $p$, thus
have the Sylow subgroups as  vertices and lie in blocks with full defect. 
Henceforth $\bB$ denotes a block of $kG$ with defect group $P$, and $e_\bB$
denotes its inertial index, which corresponds to the number of simple 
modules in $\bB$. Moreover let $\bB_{0}$ denote the principal block of $kG$ and
set $e:=e_{\bB_{0}}=|N_{G}(Z):C_{G}(Z)|$, so that  $e\,|\,p-1$.

If $M$ is an indecomposable non-projective $kG$-module, denote by $f(M)$ its
$kH$-Green correspondent. If $L$ is an indecomposable non-projective
$kH$-module, denote by $g(L)$ its $kG$-Green correspondent. If $M$ belongs
to $\bB$, then $f(M)$ belongs to the Brauer correspondent $\bb$ of $\bB$.

The stable Auslander--Reiten quiver $\Gamma_{s}(\bB)$ of $\bB$ is a finite
tube $(\ZZ/e_\bB\ZZ)A_{p^n-1}$, so that any non-projective indecomposable
$kG$-module has $\Omega$-period $2e_\bB$. For background material and standard
notation and terminology on the Auslander-Reiten quiver, we refer the reader
to \cite[Chap.~4 \& Sec. 6.5]{Ben98}. We recall that Green correspondence sets
up an
$\Omega^{2}$-equivariant graph isomorphism between $\Gamma_{s}(\bB)$ and
$\Gamma_{s}(\bb)$. Furthermore the structure of the $kH$-Brauer correspondent
$\bb$ of a block $\bB$ of $kG$ is well-known: $\bb$ has $e_\bb =e_\bB$
simple modules, all of the same $k$-dimension. These $e_\bb$ simple modules
form one boundary $\Omega^{2}$-orbit of $\Gamma_{s}(\bb)$
\cite[Sec.~6.5]{Ben98}.
Nonetheless, the $kG$-Green correspondent of a simple $kH$-module is not
necessarily simple. Simple $\bB$-modules lie in the $e$ top and bottom
$\Omega^{2}$-orbits of $\Gamma_{s}(\bB)$, each of them on a different
diagonal \cite[Prop.~4.2]{B91a}.

\begin{lem}   \label{lem:2e}
 Let $\bB$ be a block of $kG$ containing an indecomposable endotrivial
 module $V$. Then:
 \begin{enumerate}
  \item[\rm(a)] $e_\bB=e$.
  \item[\rm(b)] $\bB$ contains $2e$ endotrivial modules. They are exactly
   the modules forming the two boundary $\Omega^{2}$-orbits of
   $\Gamma_{s}(\bB)$.
  \item[\rm(c)] The $\Omega^{2}$-orbit of $\Gamma_{s}(\bB)$ containing $V$
   consists of the modules $\Omega^{2n}(V)$, $1\leq n\leq e$, while the other
   boundary $\Omega^{2}$-orbit consists of the modules $\Omega^{2n-1}(V)$,
   $1\leq n\leq e$.
 \end{enumerate}
\end{lem}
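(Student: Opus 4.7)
The plan is to transfer the problem to the Brauer correspondent $\bb$ of $\bB$ via Green correspondence, exploiting the explicit description of $T(H)$ recalled above together with the $\Omega^{2}$-equivariance of the identification of the tubes $\Gamma_{s}(\bB)\cong\Gamma_{s}(\bb)$. Let $f(V)$ denote the Green correspondent of $V$ in $\bb$; since $H$ is strongly $p$-embedded in $G$, one has $V|_{H}=f(V)\oplus(\text{projective})$, so by Lemma~\ref{lem:sub} the module $f(V)$ is again endotrivial. Using $T(H)=\langle X(H),[\Omega(k_{H})]\rangle$ and $|T(H):X(H)|=2$, we get $f(V)\cong\chi$ or $f(V)\cong\chi\otimes\Omega(k_{H})$ for some linear character $\chi\in X(H)$. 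Either way the $1$-dimensional module $\chi$ itself lies in $\bb$, so tensoring with $\chi^{-1}$ realises a self-equivalence of $kH\text{-mod}$ sending $\bb$ onto the principal block $\bb_{0}$. Thus $e_{\bB}=e_{\bb}=e_{\bb_{0}}=e$, which proves~(a).

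Under this same twist equivalence, the $e$ simple $\bb$-modules are all $1$-dimensional, obtained by multiplying $\chi$ with the linear characters lying in $\bb_{0}$. They constitute one boundary $\Omega^{2}$-orbit of the tube $\Gamma_{s}(\bb)\cong(\ZZ/e\ZZ)A_{p^{n}-1}$, and their Heller translates fill the other boundary orbit; all $2e$ of these modules belong to $T(H)$ and are therefore endotrivial. Conversely, every endotrivial module in $\bb$ is isomorphic to one of them, by the description of $T(H)$. Transferring through Green correspondence yields~(b). For~(c), $\Omega$ is an auto-equivalence interchanging the two boundary orbits of $\Gamma_{s}(\bb)$ (it swaps the simples with their syzygies) and Green correspondence is $\Omega^{2}$-equivariant, so $\Omega$ also interchanges the two boundary orbits of $\Gamma_{s}(\bB)$. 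Reading off which boundary orbit $\Omega^{n}(V)$ occupies according to the parity of $n$ then gives the description in~(c).

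The hard part will be~(a): extracting the numerical equality $e_{\bB}=e$ from the mere existence of a single endotrivial module in $\bB$. My plan hinges on locating a $1$-dimensional module inside $\bb$ via the classification of endotrivial $kH$-modules, and then recognising $\bb$ as a twist of $\bb_{0}$ through it. Once this step is in place, parts~(b) and~(c) are essentially dictated by the tube structure of the AR quiver together with the explicit shape of $T(H)$, and demand little further work.
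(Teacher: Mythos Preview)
Your argument is correct and takes a genuinely different route from the paper. The paper obtains all three parts by citing Bessenrodt's results on Auslander--Reiten components of endotrivial modules \cite{B91}: her Theorem~2.3 gives an abstract isomorphism $AR(V)\cong AR(k)$ of connected components, from which $e_{\bB}=e$ follows immediately, and her Theorem~2.6 identifies the endotrivial modules in a tube as precisely the boundary modules; part~(c) is dismissed as well-known. Your approach instead stays within the framework already set up in the section: you pass to $H$ via Green correspondence, use the description of $T(H)$ from \cite{MaTh} to locate a one-dimensional module in $\bb$, and then twist by it to identify $\bb$ with the principal block $\bb_{0}$. This is more hands-on and more self-contained given what the paper has just recalled, at the cost of being longer than the paper's two citations. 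Conversely, the paper's route applies Bessenrodt's machinery as a black box, which is quicker but imports results proved in a more general setting than cyclic defect.

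One small remark on your phrasing for~(c): you write that Green correspondence is $\Omega^{2}$-equivariant and therefore $\Omega$ interchanges the boundary orbits of $\Gamma_{s}(\bB)$. The implication as stated does not follow from $\Omega^{2}$-equivariance alone. What you actually need is either that Green correspondence commutes with $\Omega$ (which it does, since $\Omega(M)|_{H}\cong\Omega(M|_{H})$), or simply to argue directly in $\Gamma_{s}(\bB)$: since $\Omega(V)$ is endotrivial it lies on a boundary orbit by~(b), and it cannot lie on the same orbit as $V$ because the $\Omega$-period $2e$ is even. Either way the conclusion stands.
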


\begin{proof}
The connected component $AR(V)$ of $V$ in the stable Auslander--Reiten quiver
of $kG$ is $\Gamma_{s}(\bB)$, a finite tube $(\ZZ/e_\bB\ZZ )A_{p^n-1}$.
Now by \cite[Thm. 2.3]{B91}, $AR(V)\cong AR(k)=\Gamma_{s}(\bB_{0})$ which
is a tube $(\ZZ/e\ZZ )A_{p^n-1}$. Hence $e_\bB=e$. By
\cite[Thm. 2.6]{B91}, the endotrivial modules in $\Gamma_{s}(\bB)$ are
exactly the modules lying on the two boundary $\Omega^{2}$-orbits. Whence
there are $2e$ of them. Part (c) is well-known (see e.g. \cite[\S 4]{B91a}).
\end{proof}

\begin{lem}   \label{lem:X/e}
 Let $\bB$ be a block of $kG$. Then:
 \begin{enumerate}
  \item[\rm(a)] $\bB$ contains an indecomposable endotrivial module if and
   only if its Brauer correspondent $\bb$ contains a one-dimensional
   $kH$-module.
  \item[\rm(b)] If $V$ is an indecomposable endotrivial $\bB$-module, then
   there exist $U\in X(H)$ such that either $V\cong g(U)$ or
   $V\cong g(\Omega(U))$.
  \item[\rm(c)] There are $|X(H)|/e$ blocks of $kG$ containing indecomposable
   endotrivial modules. Each of them contains at least one simple endotrivial
   module.
  \end{enumerate}
\end{lem}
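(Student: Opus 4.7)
The plan is to use the isomorphism $T(G)\cong T(H)$ from Green correspondence together with Lemma~\ref{lem:2e} throughout, and to exploit the classification $T(H)=\langle X(H),[\Omega(k_{H})]\rangle$ recalled in the preamble of this section.

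For part~(a), suppose first $V\in\bB$ is endotrivial. Then $f(V)\in\bb$ is endotrivial, so by Lemma~\ref{lem:2e}(b) applied to $\bb$ every module lying on a boundary $\Omega^{2}$-orbit of $\Gamma_{s}(\bb)$ is endotrivial. In particular the $e_{\bb}$ simple $\bb$-modules, which by the paragraph preceding Lemma~\ref{lem:2e} form one such boundary orbit, are all endotrivial. But any indecomposable endotrivial $kH$-module is either some $U\in X(H)$ or some $\Omega(U)$ with $U\in X(H)$, and since projective indecomposables in cyclic-defect blocks are uniserial of length $|P|\geq 3$, each $\Omega(U)$ has composition length $|P|-1\geq 2$ and is not simple. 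Hence every simple endotrivial $\bb$-module lies in $X(H)$, so $\bb$ contains a one-dimensional module. Conversely, a one-dimensional $kH$-module in $\bb$ is endotrivial, and its $kG$-Green correspondent lies in $\bB$ and is endotrivial via $T(H)\cong T(G)$. Part~(b) then follows at once: if $V\in\bB$ is endotrivial then $f(V)$ is indecomposable endotrivial in $\bb$, hence isomorphic to some $U\in X(H)$ or to some $\Omega(U)$ with $U\in X(H)$, and applying $g$ gives $V\cong g(U)$ or $V\cong g(\Omega(U))$.

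For part~(c), the group $X(H)$ acts on the blocks of $kH$ by tensor product. Since $\chi\otimes\bb_{0}$ is the block containing the simple $\chi\in X(H)$, the blocks of $kH$ that contain a one-dimensional module are exactly the $X(H)$-orbit of $\bb_{0}$, and the stabiliser of $\bb_{0}$ is the set $X(H)\cap\bb_{0}$ of one-dimensional modules lying in $\bb_{0}$. Applying the argument of~(a) to $\bb_{0}$ itself (which contains the endotrivial module $k_{H}$) shows that all $e$ simple $\bb_{0}$-modules are one-dimensional, so $|X(H)\cap\bb_{0}|=e$ and the orbit has size $|X(H)|/e$. By~(a) this is also the number of blocks of $kG$ containing an indecomposable endotrivial module. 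Finally, for any such block $\bB$ the structural description quoted above places the simple $\bB$-modules on the two boundary $\Omega^{2}$-orbits of $\Gamma_{s}(\bB)$, and by Lemma~\ref{lem:2e}(b) every such boundary module is endotrivial; hence all $e$ simple $\bB$-modules are in fact simple endotrivials.

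The main technical obstacle is the uniseriality argument in part~(a) ruling out any $\Omega(U)$ from being simple: this is precisely where the standing hypothesis $|P|\geq 3$ is used essentially, and where the cyclic-defect machinery (Brauer tree and uniserial projective indecomposables) has to be invoked. Once this observation is in place, (b) is immediate and (c) reduces to an orbit--stabiliser count combined with reading off the position of the simple modules on the boundary rows of the AR quiver.
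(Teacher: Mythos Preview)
Your arguments for (a) and (b) are correct and essentially the same as the paper's: you use $T(G)\cong T(H)$ via Green correspondence, the description $T(H)=\langle X(H),[\Omega(k_{H})]\rangle$, and the observation that $\Omega(U)$ is never simple when $|P|\ge3$; the paper packages the same ideas slightly more tersely via $|T(H):X(H)|=2$. For the counting part of (c) you use an orbit--stabiliser argument for the $X(H)$-action on blocks, whereas the paper simply divides $|T(G)|=2|X(H)|$ by the $2e$ endotrivial modules per block from Lemma~\ref{lem:2e}; both are valid and yield $|X(H)|/e$.

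There is, however, a genuine gap in your last sentence for (c). You write that ``the structural description quoted above places the simple $\bB$-modules on the two boundary $\Omega^{2}$-orbits of $\Gamma_{s}(\bB)$'' and conclude that \emph{all} $e$ simple $\bB$-modules are endotrivial. This conflates $\bB$ with its Brauer correspondent $\bb$. The preamble says that the simple $\bb$-modules form one boundary orbit of $\Gamma_{s}(\bb)$, but for $\bB$ it only says that the simple $\bB$-modules lie in the $e$ \emph{top} and $e$ \emph{bottom} $\Omega^{2}$-orbits (that is, among those of composition length $1,\ldots,e$ and $p^{n}-e,\ldots,p^{n}-1$), one per diagonal; they need not all sit on the outermost rows. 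Your conclusion is false in general: for instance the principal $p$-block of $\fA_{p}$ has $e=(p-1)/2$ simple modules but only the trivial one is endotrivial (see Proposition~\ref{prop:Ancyc}(a)). What is needed for part~(c) is only that \emph{at least one} simple $\bB$-module lies on a boundary orbit, and this is not automatic from the quoted description; the paper invokes \cite[Thm.~3.7]{B91a} (Bessenrodt) for exactly this point. Replace your final sentence by this citation (or an independent argument to the same effect) and weaken the conclusion accordingly.
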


\begin{proof}
By \cite[Thm. 3.6]{MaTh}, $T(G)\cong T(H)$ via Green correspondence on the
indecomposable endotrivial modules, i.e., an indecomposable $\bB$-module $V$
is endotrivial if and only if $f(V)$, lying in $\bb$, is endotrivial. Now,
the simple modules in $\bb$ are all of the same dimension and form one
boundary $\Omega^{2}$-orbit of $\Gamma_{s}(\bb)$. Thus parts~(b) and~(c) of
Lemma~\ref{lem:2e} together with the fact that $|T(H):X(H)|=2$, imply that
$T(H)$ contains exactly $|T(H)|/2$ simple modules, all of dimension one.
Whence (a). Part (b) then follows from parts~(b) and~(c) of Lemma~\ref{lem:2e}.
Finally again since $|T(G)|=|T(H)|=2|X(H)|$, parts~(a) and~(b) of
Lemma~\ref{lem:2e} force the number of blocks containing endotrivial modules
to be $|X(H)|/e$. Moreover each of them contains a simple endotrivial module
by~\cite[Thm. 3.7]{B91a}, which proves that there is at least one simple
$\bB$-module lying on an end $\Omega^{2}$-orbit of $\Gamma_{s}(\bB)$.
\end{proof}

\begin{cor}   \label{cor:bounds}
 The number of simple endotrivial modules over $kG$ is bounded below by
 $|X(H)|/e$ and bounded above by $|X(H)|=|H/[H,H]|_{p'}$.
\end{cor}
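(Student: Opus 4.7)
The plan is to read both bounds off Lemmas~\ref{lem:2e} and~\ref{lem:X/e} by counting simple endotrivial modules block by block. Since distinct blocks share no simple modules, the total number of simple endotrivial $kG$-modules equals the sum, over all blocks $\bB$ of $kG$ containing an indecomposable endotrivial module, of the number of simple endotrivial $\bB$-modules.

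For the lower bound, Lemma~\ref{lem:X/e}(c) already supplies exactly $|X(H)|/e$ such blocks and guarantees at least one simple endotrivial module in each; summing yields the claimed lower bound $|X(H)|/e$.

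For the upper bound, my strategy is to control how many simple endotrivials can possibly live in a single block. By Lemma~\ref{lem:2e}(a), every block $\bB$ containing an endotrivial module has inertial index $e_\bB=e$, and by the classical theory of cyclic defect blocks this means $\bB$ has exactly $e$ isomorphism classes of simple modules, so at most $e$ of them can be endotrivial. Multiplying the $|X(H)|/e$ relevant blocks by the at-most-$e$ simple endotrivials each can carry gives the upper bound $|X(H)|$. The equality $|X(H)|=|H/[H,H]|_{p'}$ is not a separate claim to prove but the identification of $X(H)$ with the group of $k^{\times}$-valued linear characters of $H$ already recalled at the start of Section~\ref{sec:cycl}.

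The corollary therefore reduces to pure bookkeeping on top of the previous two lemmas, and I do not anticipate any genuine obstacle. The only minor point worth double-checking is that simple endotrivial modules lying in different blocks are automatically non-isomorphic, so that the sum over blocks really does count each simple endotrivial exactly once; this is immediate from the fact that every simple $kG$-module belongs to a unique block.
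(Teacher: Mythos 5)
Your proof is correct. The lower bound is obtained exactly as in the paper, by citing Lemma~\ref{lem:X/e}(c) and summing over the $|X(H)|/e$ relevant blocks. For the upper bound you take a mildly different (and arguably more transparent) route: you bound the number of simple endotrivial modules in each relevant block by the total number $e_\bB=e$ of simple modules it contains (using Lemma~\ref{lem:2e}(a) and the standard fact, recalled in the paper, that the inertial index equals the number of simple modules in a cyclic block), and then multiply by the $|X(H)|/e$ blocks from Lemma~\ref{lem:X/e}(c). The paper instead phrases the upper bound via Lemma~\ref{lem:2e} and the index $|T(H):X(H)|=2$, i.e.\ via the count of $2e$ indecomposable endotrivial modules per block and the total $|T(G)|=2|X(H)|$; unpacked, this amounts to the same bookkeeping, but your version avoids any discussion of the two boundary $\Omega^2$-orbits and only uses the crude count of simples per block. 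Both arguments rest on the same two lemmas, so the difference is one of presentation rather than substance; your final remark that simples in distinct blocks are automatically non-isomorphic correctly disposes of the only double-counting issue.
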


\begin{proof}
The lower bound is given by Lemma~\ref{lem:X/e}(c). The upper bound follows
by Lemma~\ref{lem:2e} from the fact that $|T(H):X(H)|=2$.
\end{proof}

\subsection{Location of simple endotrivial modules on the Brauer tree}

Let $\bB$ be a block of $kG$ and let $\sigma({\bB})$ denote its Brauer tree.
Using notation of \cite{HL}, the nodes of $\sigma({\bB})$ can be labelled by
$\textit{noughts}$ $\circ$ and $\textit{crosses}$ $\times$, such that a nought can only be
joined to a cross and a cross to a nought. Moreover, let $\chi_{0}$ denote
the exceptional node of $\sigma({\bB})$, if it has exceptional multiplicity
$m_{\bB}:=(p^{n}-1)/e_{\bB}>1$ and
otherwise let $\chi_{0}$ be any node of $\sigma({\bB})$. If $S$ is a simple
$\bB$-module labelling an edge of $\sigma({\bB})$, let $n(S)$ be the number
of nodes of $\sigma({\bB})$ which are not connected to $\chi_{0}$ after
removal of the edge $S$.  Let $l(f(S))$ denote the length of the $kH$-Green
correspondent of $S$. Then the following holds:

\begin{lem} \label{lem:length1}
 Let $S$ be a simple $\bB$-module labelling an edge of $\sigma({\bB})$ and
 let $\chi$ be the node adjacent to $S$, which is not connected to $\chi_{0}$
 after removal of the edge $S$. Then
 $$l(f(S))  =\begin{cases}
                    n(S)        & \text{if $\chi$ has type $\times$}, \\
                    p^{n}-n(S)  & \text{if $\chi$ has type $\circ$}\,.
 \end{cases}$$
\end{lem}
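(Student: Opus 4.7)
The plan is to establish the formula via the classical combinatorial description of Green correspondence for blocks with cyclic defect groups, due to Janusz and Kupisch and reformulated in the nought/cross framework by Hiss--Lux \cite{HL}. I begin by recalling that the Brauer tree $\sigma(\bb)$ of the Brauer correspondent $\bb$ is the star with $e_\bB$ edges meeting at the exceptional vertex of multiplicity $m_\bB=(p^n-1)/e_\bB$. Every indecomposable $\bb$-module is therefore uniserial, so $f(S)$ is uniserial and $l(f(S))$ coincides with its composition length, which is at most $p^n$, the length of a projective indecomposable of $\bb$.

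The next step invokes the Green-walk description of $f(S)$: its composition factors, read from top to bottom, are labelled by the edges of $\sigma(\bB)$ visited consecutively when one walks around $\sigma(\bB)$ starting at $S$ and turning at each vertex according to its cyclic ordering in the planar embedding, where the exceptional vertex counts with multiplicity $m_\bB$. The nought/cross labelling of \cite{HL} fixes this planar embedding, and, crucially, the type of the endpoint $\chi$ of $S$ determines which of the two possible walks out of $S$ produces $f(S)$. Thus the two cases of the lemma correspond precisely to the two possible directions in which the walk may leave the edge $S$ at $\chi$.

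The proof is then completed by enumerating the walk in each case. When $\chi$ has type $\times$, the walk remains confined to the $\chi$-side of $\sigma(\bB)$ (after removing the edge $S$), visiting each ordinary node once and the exceptional node (if it lies on this side) a total of $m_\bB$ times; summing the contributions shows that the total number of steps equals exactly $n(S)$. When $\chi$ has type $\circ$, the walk instead proceeds through $S$ into the $\chi_0$-side and sweeps out the complementary portion of the tree, and the analogous enumeration, together with the identity $e_\bB m_\bB=p^n-1$ governing the total weighted node count, yields $p^n-n(S)$. The main obstacle is matching the direction of the Green walk at $\chi$ with the nought/cross convention of \cite{HL}; once this alignment is verified the two cases are essentially symmetric, and the node-count becomes a routine consequence of the bipartite structure of $\sigma(\bB)$.
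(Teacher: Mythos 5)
The paper does not actually prove this lemma: its ``proof'' is the two-line citation to \cite[Lem.~4.4.11]{HL} and \cite[Lem.~9.3]{Fe}, so any genuine argument you give is necessarily a different route. Your choice of route --- the Green-walk description of the uniserial $kH$-Green correspondents over the star-shaped tree $\sigma(\bb)$ --- is indeed the standard machinery underlying those references, and your first paragraph (every indecomposable $\bb$-module is uniserial of length at most $p^n$) is correct. But as written the argument has two genuine gaps, and they sit exactly where the content of the lemma lies.

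First, the entire point of the nought/cross labelling is to decide \emph{which} of the two values $n(S)$ and $p^{n}-n(S)$ occurs; you assert that the type of $\chi$ ``determines which of the two possible walks out of $S$ produces $f(S)$'' and then explicitly defer the verification (``once this alignment is verified\ldots''). Without that step you have at best shown $l(f(S))\in\{n(S),\,p^{n}-n(S)\}$, which is not the lemma. Second, the enumeration itself does not come out under the counting scheme you describe. Note that by the definition of $\chi$ the exceptional node $\chi_{0}$ can never lie on the $\chi$-side, so your parenthetical in the $\times$ case is vacuous; and in the $\circ$ case, counting the $e_{\bB}-n(S)$ ordinary nodes once each and $\chi_{0}$ with weight $m_{\bB}$ gives $e_{\bB}-n(S)+m_{\bB}$, which equals $p^{n}-n(S)=e_{\bB}m_{\bB}+1-n(S)$ only when $e_{\bB}=1$ or $m_{\bB}=1$. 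The correct weight for the exceptional vertex is $p^{n}-e_{\bB}$ (this is precisely the convention built into $\times(S)$ in Lemma~\ref{lem:length2}), reflecting the fact that the walk traverses each of the edges around $\chi_{0}$ a total of $m_{\bB}$ times rather than visiting the node $m_{\bB}$ times. More fundamentally, you never pin down the dictionary between steps of a walk on $\sigma(\bB)$ and composition factors of the $\bb$-module $f(S)$; Green's walk in its usual form tracks the $\Omega$-orbit/projective resolution in $\bB$, and extracting $l(f(S))$ from it requires the additional bookkeeping that $\Omega$ sends a uniserial $\bb$-module of length $l$ to one of length $p^{n}-l$ while $\Omega^{2}$ preserves lengths. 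To make your argument complete you would need to (i)~set up that dictionary precisely, (ii)~fix the weight of $\chi_{0}$ correctly, and (iii)~prove, not assume, that the Hiss--Lux type of $\chi$ selects the short walk when $\chi$ is a cross and the long one when it is a nought.
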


\begin{proof}
See \cite[Lem.~4.4.11]{HL} and \cite[Lem.~9.3]{Fe}.
\end{proof}

If $S$ is a simple $\bB$-module, let $\times(S)$ denote the number of nodes on
the part of $\sigma(\bB)$ to which the node adjacent to $S$ of type $\times$
belongs after removal of the edge $S$, with the exceptional node counted
$(p^{n}-e)$-times.

\begin{lem} \label{lem:length2}
 Let $S$ be a simple $\bB$-module. Then $l(f(S))=\times(S)$.
\end{lem}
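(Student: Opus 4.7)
The plan is to deduce Lemma~\ref{lem:length2} directly from Lemma~\ref{lem:length1} by a case split on the position of $\chi_0$ together with a short count of the vertices of $\sigma(\bB)$.

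First I would set up notation. Removing the edge $S$ splits $\sigma(\bB)$ into two subtrees, $T_\times$ and $T_\circ$, containing respectively the $\times$- and $\circ$-endpoints of $S$. Exactly one of them contains the exceptional node $\chi_0$. Since $\sigma(\bB)$ is a tree with $e_\bB+1$ vertices, and since in the present setting $e_\bB=e$ by Lemma~\ref{lem:2e}(a), we have
\[
|T_\times| + |T_\circ| = e+1.
\]

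Next I would treat the two possible positions of $\chi_0$ in parallel, matching them to the two cases of Lemma~\ref{lem:length1}. If $\chi_0 \in T_\circ$, then the vertex $\chi$ disconnected from $\chi_0$ after the removal of $S$ is the $\times$-endpoint of $S$, and Lemma~\ref{lem:length1} gives $l(f(S)) = n(S) = |T_\times|$; on the other hand, since $\chi_0 \notin T_\times$, the weighting plays no role and $\times(S) = |T_\times|$ directly. If $\chi_0 \in T_\times$, then $\chi$ is of type $\circ$ and Lemma~\ref{lem:length1} gives $l(f(S)) = p^n - n(S) = p^n - |T_\circ|$; this time the exceptional weight does enter, so by definition
\[
\times(S) = (|T_\times|-1) + (p^n-e),
\]
which the identity $|T_\times|+|T_\circ|=e+1$ rewrites as $p^n-|T_\circ|$.

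In both cases $\times(S) = l(f(S))$, establishing the lemma. I expect no real difficulty: the only subtle point is the choice of the weight $p^n-e$ attached to $\chi_0$, which is precisely what is required for the shift between the two formulas of Lemma~\ref{lem:length1} to cancel against the vertex count $e+1$ of the Brauer tree. Beyond that the argument is purely combinatorial.
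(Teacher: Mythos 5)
Your argument is correct and is essentially the paper's own proof: both split into the two cases according to which side of the removed edge contains $\chi_0$, apply Lemma~\ref{lem:length1}, and use the vertex count $e+1$ of the tree so that the weight $p^n-e$ on the exceptional node converts $p^n-n(S)$ into $\times(S)$. The paper merely presents the second case as a one-line algebraic identity (and notes that this generalizes \cite[Lem.~4.4.12]{HL} from $n=1$ to arbitrary $n$), whereas you spell out the same bookkeeping with the subtrees $T_\times$ and $T_\circ$.
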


\begin{proof}
\cite[Lem.~4.4.12]{HL} states and proves the case $n=1$. Their proof can be
generalized to an arbitrary $n$: Let $S$ be an edge of $\bB$ and let $\chi$
be the node adjacent to $S$, which is not connected to $\chi_{0}$ after
removal of $S$. If $\chi$ is of type $\times$, then $\times(S)=n(S)=l(f(S))$
by Lemma~\ref{lem:length1}. If $\chi$ is of type $\circ$, then by
Lemma~\ref{lem:length1} we get
$$l(f(S))=p^{n}-n(S)=p^{n}-(e+1- (\times(S)-(p^{n}-e)+1))= \times(S).$$
\end{proof}

\begin{lem}   \label{lem:length3}
 Let $\bB$ be a block of $kG$ containing an endotrivial module and let $S$
 be a simple $\bB$-module. Then $S$ is endotrivial if and only
 if $l(f(S))\in\{1,p^{n}-1\}$.
\end{lem}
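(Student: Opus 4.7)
The plan is to transport the problem from $\bB$ to its Brauer correspondent $\bb$, where the two boundary $\Omega^{2}$-orbits of the AR tube can be described explicitly, and then pull the conclusion back through the Green correspondence.

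First, I would invoke Lemma~\ref{lem:2e}(b): since $\bB$ contains an endotrivial module, its $2e$ endotrivial modules are precisely those sitting on the two boundary $\Omega^{2}$-orbits of the tube $\Gamma_{s}(\bB)$. Because Green correspondence provides an $\Omega^{2}$-equivariant graph isomorphism $\Gamma_{s}(\bB)\cong\Gamma_{s}(\bb)$, the simple module $S$ is endotrivial if and only if its $kH$-Green correspondent $f(S)$ lies on one of the two boundary $\Omega^{2}$-orbits of $\Gamma_{s}(\bb)$.

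Next I would identify those two boundary orbits on the $\bb$-side. By Lemma~\ref{lem:X/e}(a) the hypothesis forces $\bb$ to contain a one-dimensional $kH$-module. Since the simples of $\bb$ all share the same $k$-dimension and together constitute a single boundary $\Omega^{2}$-orbit of $\Gamma_{s}(\bb)$, every simple $\bb$-module is then one-dimensional, hence endotrivial, and of composition length~$1$. Applying Lemma~\ref{lem:2e}(b) to $\bb$, these $e$ simples exhaust one of the two boundary orbits; the other boundary orbit must therefore sit at the opposite end of the finite tube $(\ZZ/e\ZZ)A_{p^{n}-1}$. Since the $p^{n}-1$ successive $\Omega^{2}$-layers of that tube are indexed by composition length running from $1$ up to $p^{n}-1$, this second boundary orbit consists precisely of the indecomposable $\bb$-modules of length $p^{n}-1$.

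Combining the two steps yields at once that $S$ is endotrivial iff $l(f(S))\in\{1,p^{n}-1\}$, as required. The only non-formal ingredient is the final identification of the opposite boundary with the length-$(p^{n}-1)$ modules, and this I expect to be the main obstacle of the write-up: it is a standard feature of the AR quiver of cyclic defect blocks, but if a clean reference is not at hand one may verify it directly inside the Brauer tree algebra $\bb$, whose Brauer tree is a star with exceptional vertex at the centre and whose indecomposables are described by walks along that star.
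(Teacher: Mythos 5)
Your argument is correct and follows essentially the same route as the paper: the paper's (very terse) proof likewise combines Lemmas~\ref{lem:2e} and~\ref{lem:X/e} with the fact that the two boundary $\Omega^{2}$-orbits of $\Gamma_{s}(\bb)$ consist of the indecomposable $kH$-modules of length $1$ and $p^{n}-1$, transporting endotriviality through the Green correspondence. Your write-up merely makes explicit the identification of those boundary orbits (simples of $\bb$ being one-dimensional, the opposite boundary having length $p^{n}-1$ in the uniserial block $\bb$), which the paper takes as known from \cite[Sec.~6.5]{Ben98}.
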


\begin{proof}
Let $\bb$ be the Brauer correspondent of $\bB$. Then the claim follows from
the fact that the boundary $\Omega^{2}$-orbits of $\Gamma_{s}(\bb)$ are
made of the indecomposable $kH$-modules of length $1$ and $p^{n}-1$, together
with Lemma~\ref{lem:2e} and~\ref{lem:X/e}.
\end{proof}

Let us call a leaf of a Brauer tree $\sigma(\bB)$ an \emph{exceptional leaf}
if the exceptional node is sitting at the end of this leaf and has exceptional
multiplicity $m_{\bB}> 1$. Then we can state the main result of this section.

\begin{thm}   \label{thm:ETleaves}
 Let $\bB$ be a block of $kG$ containing an endotrivial module and assume
 $e>1$. Let $S$ be a simple $\bB$-module. Then $S$ is endotrivial if and only
 if $S$ corresponds to a non-exceptional leaf of $\sigma(\bB)$.
\end{thm}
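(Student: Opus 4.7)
The plan is to combine the two immediately preceding lemmas. By Lemma~\ref{lem:length3}, $S$ is endotrivial if and only if $l(f(S))\in\{1,p^n-1\}$, and by Lemma~\ref{lem:length2}, $l(f(S))=\times(S)$. So the statement reduces to the purely combinatorial assertion that, under the hypothesis $e>1$,
\[
\times(S)\in\{1,\,p^n-1\}\ \Longleftrightarrow\ S\text{ corresponds to a non-exceptional leaf of }\sigma(\bB).
\]

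The key observation I would introduce is a weighted vertex count on $\sigma(\bB)$. Since $\sigma(\bB)$ has $e$ edges, it has $e+1$ vertices; weighting the exceptional node by $p^n-e$ and every other vertex by $1$, the total weight is $e+(p^n-e)=p^n$. When we remove the edge $S$, $\sigma(\bB)$ splits into two subtrees; writing $\circ(S)$ for the analogue of $\times(S)$ on the $\circ$-side, the weighted counts satisfy $\times(S)+\circ(S)=p^n$. Hence the condition $\times(S)\in\{1,p^n-1\}$ is equivalent to $\min\{\times(S),\circ(S)\}=1$, i.e., to one of the two subtrees consisting of a single vertex of weight~$1$.

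From this point the proof is a short case analysis, which is the main (though mild) obstacle. If $S$ is a non-exceptional leaf, its leaf endpoint is a single vertex of weight~$1$, so $\times(S)=1$ when that endpoint has type~$\times$ and $\times(S)=p^n-1$ when it has type~$\circ$. Conversely, if $S$ is not a leaf, both endpoints of $S$ retain a neighbour after removal, so each subtree contains at least two vertices and $\times(S),\circ(S)\geq 2$. If $S$ is an exceptional leaf, the isolated leaf vertex is precisely the exceptional node, contributing weight $p^n-e$; the hypothesis $e>1$ excludes $p^n-e=p^n-1$, and the defining condition $m_\bB>1$ of an exceptional leaf (equivalently $p^n\geq 2e+1$) excludes $p^n-e=1$. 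Both possibilities are thus ruled out, yielding the equivalence.
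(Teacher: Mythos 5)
Your proof is correct, and for one half of the statement it takes a genuinely different route from the paper. Both arguments reduce via Lemma~\ref{lem:length3} and Lemma~\ref{lem:length2} to computing $\times(S)$, and for the leaf cases the computations coincide (your weighted count with total weight $e+(p^n-e)=p^n$ is just a tidy repackaging of the paper's case-by-case evaluation of $l(f(S))$ as $1$, $p^n-1$, $p^n-e$ or $e$). The difference is in how the inner edges are excluded: the paper first invokes the lifting theorem (Theorem~\ref{thm:lift}) together with the fact that inner edges of a Brauer tree correspond to non-liftable simple modules, and only then runs the numerical argument on leaves; you instead dispose of inner edges inside the same combinatorial framework, observing that removal of a non-leaf edge leaves two subtrees each with at least two vertices, hence $2\le\times(S)\le p^n-2$. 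Your route is more self-contained --- it needs nothing beyond the two length lemmas and the arithmetic constraints $e>1$, $e\mid p^n-1$, $m_{\bB}>1$ --- whereas the paper's use of liftability gives the (independently useful) structural fact that an endotrivial simple module in this situation is automatically liftable. The small arithmetic points you rely on all check out: $p^n-e\ge1$ always, $m_{\bB}>1$ is indeed equivalent to $p^n\ge 2e+1$, and the complementation $\times(S)+\circ(S)=p^n$ correctly converts the condition $\times(S)\in\{1,p^n-1\}$ into ``one subtree is a single non-exceptional vertex.''
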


\begin{proof}
By definition the leafs of $\sigma(\bB)$ correspond to liftable simple
$\bB$-modules whereas the inner edges correspond to non liftable simple
modules. Hence by Theorem~\ref{thm:lift} only leaves of $\sigma(\bB)$ can
be endotrivial. So let $S$ be a simple $\bB$-module corresponding to a leaf
of $\sigma(\bB)$ and let $\chi$ denote the end node of this leaf. Then by
Lemma~\ref{lem:length3}, $S$ is endotrivial if and only if
$l(f(S))\in\{1,p^{n}-1\}$. Applying Lemma~\ref{lem:length2}, we obtain that
the length $l(f(S))$ is as follows: \par
If $\chi\neq \chi_{0}$ and is of type $\times$, then $l(f(S))=1$. \par
If $\chi\neq \chi_{0}$ and is of type $\circ$, then
$l(f(S))=(e-1)+p^{n}-e=p^{n}-1$.\par
Notice that if $\chi_{0}$ has multiplicity one, then we may always assume that
$\chi\neq \chi_{0}$. Therefore, now assume that $\chi_{0}$ has multiplicity
$m_{\bB}>1$, so $1<e < p-1$.\par
If $\chi= \chi_{0}$ and $\chi$ is of type $\times$, then
$l(f(S))=p^{n}-e\notin\{1,p^{n}-1\}$.\par
If $\chi= \chi_{0}$ and $\chi$ is of type $\circ$, then
$l(f(S))=e\notin\{1,p^{n}-1\}$. Hence the result.
\end{proof}

\subsection{$\SL_{2}(q)$ in cross characteristic}

As an application we classify simple endotrivial modules for $G=\SL_{2}(q)$,
$q=p^{n}$, $p$ a prime, in non-defining characteristic $\ell$ dividing the
order of $G$. For the exceptional covering groups of $\PSL_2(9)\cong\fA_6$ see
Theorem~\ref{thm:mainAn}. For $\ell=p$, see Theorem~\ref{thm:defchar}.

\begin{prop}   \label{prop:SL2}
 Let $G=\SL_{2}(q)$, $q=p^{n}$ with $p$ a prime. Let $V$ be a non-trivial simple
 $kG$-module, where $k$ is algebraically closed of characteristic $\ell\neq p$.
 Then $V$ is endotrivial if and only if one of:
 \begin{enumerate}
  \item[\rm(1)] $2\neq \ell \, | \, q-1$ and $V$ lies in an
   $\ell$-block of full defect and inertial index 2 (cyclic defect); 
  \item[\rm(2)] $p\neq 2\neq \ell\,|\,q+1$ and $V$ lies in the non-principal
   $\ell$-block of full defect and inertial index 2 (cyclic defect); 
  \item[\rm(3)] $3=\ell \, | \, q+1$, $|G|_\ell=3$ and $V$ lies in
   the principal $\ell$-block (cyclic defect).
 \end{enumerate}
 Moreover, if $\ell=2$, $q\equiv -1\pmod{4}$  and $V$ lies in the principal
 $\ell$-block, then $V$ is endotrivial as a $k\PSL_{2}(q)$-module, but not
 as a $kG$-module.
\end{prop}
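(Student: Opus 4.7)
The proof splits on the prime $\ell$. For $\ell$ odd with $\ell\neq p$ and $\ell\mid|G|=q(q^{2}-1)$, exactly one of $q\mp 1$ is divisible by $\ell$ and the Sylow $\ell$-subgroup $P$ of $G$ is cyclic, being contained in the corresponding maximal torus. Since the Weyl group of either torus has order~$2$, the inertial index satisfies $e=2$ in every $\ell$-block of full defect, and Theorem~\ref{thm:ETleaves} applies: a simple module in an $\ell$-block $\bB$ of full defect containing an endotrivial module is endotrivial exactly when it labels a non-exceptional leaf of $\sigma(\bB)$. Together with Lemma~\ref{lem:X/e}, which counts the blocks containing endotrivial modules as $|X(H)|/e$, the problem reduces to reading off the Brauer trees of $\SL_{2}(q)$ from the classical literature (Burkhardt, Fong--Srinivasan) and locating the exceptional node in each.

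I would then proceed case by case. For $\ell\mid q-1$ odd, every block of full defect has a three-node Brauer tree with the exceptional node $\chi_{0}$ in the middle; both leaves are non-exceptional and each block therefore contributes a non-trivial simple endotrivial module, giving case~(1). For $\ell\mid q+1$ odd with $p$ odd, the principal block's tree has shape $1_{G}\text{---}\text{St}\text{---}\chi_{0}$ with $\chi_{0}$ at a leaf, so the only non-exceptional leaf is $1_{G}$ and there is no non-trivial simple endotrivial module in the principal block \emph{unless} the exceptional multiplicity $m_{\bB}=(|P|-1)/2$ equals one; the latter happens precisely when $\ell=3$ and $|G|_{3}=3$, which is case~(3). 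The non-principal block of full defect, which exists only when $p$ is odd (for $q$ even one checks $|X(H)|/e=1$, so only the principal block carries endotrivial modules), has a tree with $\chi_{0}$ internal and both leaves non-exceptional, producing case~(2). The main technical point is in verifying the exact shape of each Brauer tree.

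For $\ell=2$ the case $q$ even is the defining characteristic (Theorem~\ref{thm:defchar}); for $q$ odd the Sylow $2$-subgroup of $G=\SL_{2}(q)$ is generalized quaternion, not cyclic, so Theorem~\ref{thm:ETleaves} is no longer available. Here I would argue directly from the character table: by Lemma~\ref{lem:deg} a non-trivial simple endotrivial $kG$-module has dimension $\equiv\pm 1\pmod{|G|_{2}/2}$, and by Theorem~\ref{thm:lift} with Corollary~\ref{cor:zero} it lifts to an ordinary character $\chi$ with $|\chi(g)|=1$ on every $2$-singular class; the short list of candidates from the $\SL_{2}(q)$ character degrees can be ruled out by inspection.

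Finally, for $\bar G:=\PSL_{2}(q)$ with $q\equiv -1\pmod 4$ the Sylow $2$-subgroup is dihedral of order $(q+1)_{2}\ge 4$, and the principal $2$-block of $k\bar G$ contains a simple module $V$ of degree $(q-1)/2$ (descended from one of the characters $\eta_{i}$ of $\SL_{2}(q)$ trivial on the centre) which I would prove endotrivial by direct computation on a Sylow $2$-subgroup, or by appeal to Erdmann's theory of blocks of dihedral defect. That the inflation of $V$ to $G$ fails to be endotrivial is detected on a Sylow via Lemma~\ref{lem:sub}: its restriction to $Q\cong Q_{2^{n}}$ factors through $Q/Z(Q)\cong D_{2^{n-1}}$, and the inflation to $Q$ of a projective $kD_{2^{n-1}}$-module is not $kQ$-projective (the kernel $Z(Q)$ is a non-trivial $2$-group), so $V|_{Q}\otimes V^{*}|_{Q}$ cannot have the form $k\oplus(\text{projective }kQ)$. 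The hardest step is identifying the right $V$ and establishing its endotriviality over the dihedral Sylow.
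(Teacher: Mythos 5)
Your treatment of the odd primes $\ell\mid q\mp1$ is exactly the paper's argument: cyclic Sylow subgroups, $e=2$, Lemma~\ref{lem:2e} and Lemma~\ref{lem:X/e} to isolate the blocks that can carry endotrivial modules, and then Theorem~\ref{thm:ETleaves} applied to the known Brauer trees (the paper reads these off from Bonnaf\'e's book rather than Burkhardt/Fong--Srinivasan, but the trees and the location of the exceptional node are as you describe, including the degenerate case $m_{\bB_0}=1$ giving case~(3) and the $q$ even subtlety). For $\ell=2$ your negative direction also goes through, and in fact more cleanly than you suggest: the central involution $z=-I$ is $2$-singular and every candidate character of degree $(q-1)/2$ takes the value $\pm(q-1)/2$ at $z$, so Corollary~\ref{cor:zero} kills endotriviality over $\SL_2(q)$ outright; this is a tidy alternative both to the paper's degree argument for $q\equiv1\pmod4$ and to its $\End_k(S_i)$ computation for $q\equiv-1\pmod4$ (your inflation-to-the-quaternion-Sylow argument for the latter is also correct).

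The one genuine gap is the positive assertion for $\bar G=\PSL_2(q)$, $q\equiv-1\pmod 4$: that the two simple modules of dimension $(q-1)/2$ in the principal $2$-block are actually endotrivial. You flag this as the hardest step but neither of your proposed routes closes it. A ``direct computation on a Sylow $2$-subgroup'' requires knowing $V|_D$ as a module over the dihedral group $D$, which character theory does not give you in characteristic~$2$; and an appeal to Erdmann's classification of dihedral-defect blocks is insufficient on its own, because endotriviality is not a Morita invariant (the paper itself makes this point in Remark~7.3 about the $3$-blocks of $J_1$). The paper fills this step by citing specific results of Craven's thesis, which splits into the cases $q\equiv3\pmod 8$ and $q\equiv7\pmod 8$; without that (or an equivalent argument) your proof of the ``moreover'' clause is incomplete.
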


This proposition is essentially proven in \cite{Schu} via character theory.
Only the cases $p\neq \ell=2$, $q\equiv-1\pmod{4}$ and
$p\neq 2\neq \ell\,|\,q+1$ for the non-principal $\ell$-block of full
defect and inertial index 2 were left open. We give here a summary proof using
the techniques of this section and some results of Craven's \cite{Cr}.

\begin{proof}
Only blocks with full defect need to be investigated, for indecomposable
endotrivial modules have Sylow subgroups as vertices. In addition, in the
cyclic defect case, only blocks $\bB$ with $e_{\bB}=e$ can contain endotrivial
modules by Lemma~\ref{lem:2e}. Let $\bB_{0}$ denote the principal $\ell$-block
of $kG$. Recall that $|\SL_{2}(q)|=(q-1)q(q+1)$. \par
If $\ell\neq 2$ and $\ell \, | \, q-1$ or $\ell \, | \, q+1$, then a
Sylow $\ell$-subgroup of $G$ is cyclic and $e=2$. Moreover if $p\neq 2$, then
$G$ has exactly two
$\ell$-blocks of full defect and inertial index 2, $\bB_{0}$ and say $\bB_{1}$.
By \cite[\S 6.2.1 and \S 6.2.2]{B11}, $|X(H)|=4$, thus the number of blocks
containing endotrivial modules is $|X(H)|/e=2$ by Lemma~\ref{lem:X/e}.
Now if $\ell\,|\,q-1$, the Brauer trees $\sigma(\bB_{0})$, $\sigma(\bB_{1})$
both have the form $\xymatrix{ \circ \ar@{-}[r]&\bullet\ar@{-}[r] &\circ}$
with exceptional node in the middle. (See \cite[Sec.~9.3 and \S 9.4.2]{B11}).
Therefore, by Theorem~\ref{thm:ETleaves}, all non-trivial simple $\bB_{0}$-
and $\bB_{1}$-modules are endotrivial. If $\ell\,|\,q+1$, $\sigma(\bB_{1})$
has the form $\xymatrix{ \circ \ar@{-}[r]&\bullet\ar@{-}[r] &\circ}$ with
exceptional node in the middle (see \cite[Sec.~9.3]{B11}), hence both simple
$\bB_{1}$-modules are endotrivial by Theorem~\ref{thm:ETleaves}.
The tree $\sigma(\bB_{0})$ has the form
$\xymatrix{ \circ \ar@{-}[r]&\circ\ar@{-}[r] &\bullet}$ with exceptional node
sitting on one end (see \cite[\S 9.4.3]{B11}). Moreover the exceptional
multiplicity is $m_{\bB_{0}}=1$ if and only if $\ell=3$ and $|G|_\ell=3$.
Thus, by Theorem~\ref{thm:ETleaves}, $\bB_{0}$ contains a non-trivial simple
endotrivial module if and only if $|G|_\ell=3$. (The trivial module corresponds
to the non-exceptional end node.) Now if $p=2$, the situation is similar,
except that only the principal block has full defect and inertial index~2
(see \cite[\S 9.4.2]{B11}). \par
If $p\neq \ell=2$, then only the principal block has full defect: it contains
two non-trivial simple modules $S_{1}$ and $S_{2}$ with
$\dim(S_{1})=\dim(S_{2})=\frac{1}{2}(q-1)$ (see \cite[Chap. 9]{B11}).
If $q\equiv 1\pmod{4}$, then $\dim(S_i)\equiv 0\pmod{2}$. Hence $S_{1}$,
$S_{2}$ are not endotrivial by Lemma~\ref{lem:deg}. If $q\equiv -1\pmod{4}$,
then $S_{1}$ and $S_{2}$ are endotrivial $k\PSL_{2}(q)$-modules by
\cite[Sec.~4.4]{Cr} for $q\equiv 3\pmod{8}$ and by \cite[Prop.~4.26]{Cr} for
$q\equiv 7\pmod{8}$. However, the inflation of $S_{1}$ and $S_{2}$ from
$\PSL_{2}(q)=G/Z(G)$ to $G$ does not yield endotrivial $kG$-modules. Indeed,
since $|Z(G)|=2$, we get for $1\leq i\leq 2$, $\End_{k}(S_{i})\cong
k\oplus( \Ind_{Z(G)}^{G}(k)\oplus\cdots\oplus \Ind_{Z(G)}^{G}(k))$ as
$kG$-modules.
\end{proof}

\section{Covering groups of alternating groups} \label{sec:alt}

In this section we classify simple endotrivial modules for covering groups
of alternating groups. Note that some (but not all) simple endotrivial modules
for alternating groups were described in \cite{CHM10,CMN09}, using different
methods. The faithful modules for proper covering groups were not investigated
previously.

Throughout $p$ denotes a prime and $k$ a large enough
field of characteristic~$p$.

\subsection{Faithful modules for $\fA_n$}
Recall that the simple $\QQ\fS_n$-modules are parametrized by partitions of $n$.
We first classify partitions possessing certain types of hooks.

\begin{lem}   \label{lem:Sn}
 Let $n=mp+r$ with $0\le r<p$ and assume that $n\ge 2p$. Let
 $\la\vdash n$ be a partition such that the corresponding irreducible
 character $\chi_\la$ of $\fS_n$ does not vanish on elements of cycle shape
 $(n-p)(p)$, $(n-p-1)(p)(1)$, nor on elements whose cycle shape contains a
 cycle of length $n-r$. Then $\la$ or its conjugate is one of the partitions
 $$(1^n)\quad\text{ or }\quad  (n-p,r+1,1^{p-r-1}).$$
\end{lem}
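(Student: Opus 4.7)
The plan is to apply the Murnaghan--Nakayama rule to each of the three non-vanishing hypotheses. Recall that if $\sigma \in \fS_n$ has a cycle of length $c$ and $\sigma'$ denotes the product of the remaining cycles, then
\[
  \chi_\la(\sigma) = \sum_{\xi} (-1)^{L(\xi)}\, \chi_{\la \setminus \xi}(\sigma'),
\]
the sum running over rim hooks $\xi \subset \la$ of length $c$. In particular, non-vanishing of the left-hand side forces some such rim hook to exist, and iterating produces constraints on the residual partitions.

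The first key step is to apply this to a cycle of length $n-r$: the hypothesis yields a rim hook $\xi_0 \subset \la$ of length $mp$ whose complement $\nu \vdash r$ has size $r < p \le n/2$. Since $r$ is small, $\la$ must be a \emph{near-hook}: its diagram is an L-shape (a single large rim hook) with a small ``nucleus'' $\nu$ attached at the inner corner. I would parametrize this family explicitly by the arm length $a$ and leg length $b = n - r - 1 - a$ of $\xi_0$, together with $\nu$, noting that conjugation $\la \leftrightarrow \la'$ swaps $a$ with $b$ and replaces $\nu$ with $\nu'$ while preserving all three hypotheses (the cycle shapes involved have well-defined parity).

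Next, I would impose the remaining two conditions analogously. Cycle shape $(n-p, p)$ produces a rim hook $\xi_1$ of length $n-p$ in $\la$ whose complement $\rho \vdash p$ must satisfy $\chi_\rho((p)) \ne 0$; by a further application of Murnaghan--Nakayama, $\rho$ must itself be a $p$-hook $(p-s, 1^s)$. Similarly, cycle shape $(n-p-1, p, 1)$ yields a rim hook $\xi_2$ of length $n-p-1$ whose complement $\tau \vdash p+1$ must in turn admit a rim hook of length $p$ with residue $(1)$.

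The final step is to cross-reference these constraints with the near-hook family from the first step. I expect the analysis to split naturally into two cases. If $\nu = \emptyset$, then $\la$ is itself a hook; a direct check on hook shapes $(n-k, 1^k)$ using conditions~1 and~2 should pin down $k \in \{0, n-1\}$, giving $\la \in \{(n), (1^n)\}$. If $\nu \ne \emptyset$, then the positions of $\xi_1, \xi_2$ inside the near-hook diagram, together with the requirement that their complements have the correct iterated hook structure, should force $\la = (n-p, r+1, 1^{p-r-1})$ up to conjugation. The main obstacle I anticipate is this bookkeeping step: several overlapping large rim hooks must coexist inside the same diagram, and tracking their relative positions together with the shape of the residual $\nu$ will require a careful but finite case analysis.
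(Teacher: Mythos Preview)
Your overall strategy---iterated Murnaghan--Nakayama peeling off large rim hooks---is exactly the paper's, but you are underusing the third hypothesis and this leaves a genuine gap.

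The phrase ``does not vanish on elements whose cycle shape contains a cycle of length $n-r$'' is meant over \emph{all} such elements: any permutation with an $(n-r)$-cycle has order divisible by $n-r=mp$, hence is $p$-singular, so in the intended application (endotriviality) $\chi_\lambda$ must be nonzero on every one of them. Since $n-r>n/2$, the rim hook $\xi_0$ of length $n-r$ is unique, and Murnaghan--Nakayama gives $\chi_\lambda(\sigma)=\pm\chi_\nu(\sigma')$ for \emph{every} $\sigma'\in\fS_r$. Hence $\chi_\nu$ is nowhere zero on $\fS_r$, which by Burnside forces $\nu$ to be linear, i.e.\ $\nu\in\{(r),(1^r)\}$. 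This is the paper's key reduction: up to conjugation one may take $\nu=(r)$, and then $\lambda$ lies in one of three explicit one-parameter families, which the remaining two hypotheses sieve down easily. Your version extracts only that $\nu\vdash r$ is ``small'', leaving a family indexed by all partitions of $r$; since $r$ can be as large as $p-1$ with $p$ arbitrary, the promised ``finite case analysis'' has no uniform bound.

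There is also a concrete error in your predicted case outcomes. Your split $\nu=\emptyset$ versus $\nu\ne\emptyset$ is really $r=0$ versus $r\ge1$. For $r=0$ you expect only $\lambda\in\{(n),(1^n)\}$, but $(n-p,1^p)$ is itself a hook of $n=mp$ satisfying all three conditions (it is precisely $(n-p,r+1,1^{p-r-1})$ at $r=0$), so your hook check would miss it. Conversely, for $r\ge1$ the partitions $(n)$ and $(1^n)$ have $\nu=(r)$ or $(1^r)$, non-empty, and satisfy all hypotheses, so your $\nu\ne\emptyset$ case cannot force $\lambda=(n-p,r+1,1^{p-r-1})$ alone. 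Once you pin down $\nu\in\{(r),(1^r)\}$ first, these issues disappear and the three families can be handled exactly as the paper does.
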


\begin{proof}
By the Murnaghan--Nakayama formula (see e.g.~\cite[2.4.7]{JK81}), if $\chi_\la$
does not vanish on
elements whose cycle shape contains an $n-r$-cycle, then $\la$ must have a
hook of length $n-r$, and the partition $\mu$ obtained by removing such a
hook must have hooks of any length less or equal to $r$. Thus, by assumption
$\mu$ is, up to conjugate, the partition $(r)$, and then $\la$ is of the form
$$(r,b,1^{n-r-b})\quad(1\le b\le r)\quad\text{or}\quad
  (r+a,r+1,1^{n-2r-a-1})\quad(1\le a\le n-2r-1),$$
or $\la$ is the hook $(r,1^{n-r})$. \par
If $\chi_\la$ also vanishes on elements of cycle shape $(n-p)(p)$, then
$\la$ has to possess a hook of length $n-p$, and the partition obtained by
removing such a hook must be a hook itself (of length $p$). It is easily
seen that for the above possibilities, if $\la=(r,b,1^{n-r-b})$ then we have
$r=p-1$, so $\la=(p-1,1^{n-p+1})$, if $\la=(r+a,r+1,1^{n-2r-a-1})$ then only
the two partitions $(n-p,r+1,1^{p-r-1})$ and $(p,r+1,1^{n-p-r-1})$ are
possible, and the only hook satisfying our condition is $(r,1^{n-r})$.\par
If finally $\chi_\la$ does not vanish on elements of cycle shape $(n-p-1)(p)$,
then $\la$ has an $n-p-1$-hook, and the remaining partition has a $p$-hook.
In our first case, this is seen not to be possible. In the second
case, it holds true when $\la=(n-p,r+1,1^{p-r-1})$, and finally, the hook
$(r,1^{n-r})$ only has this property when $r=1$, so $\la=(1^n)$. The claim
is shown.
\end{proof}

\begin{prop}   \label{prop:Anodd}
 Any simple faithful endotrivial $k\fA_n$-module, $n\ge 2p$, over a
 field $k$ of characteristic $p>2$ is a constituent of the restriction to
 $\fA_n$ of a $\QQ\fS_n$-module indexed by $\la\vdash n$, where one of:
 \begin{enumerate}
  \item[\rm(1)] $2p\le n=2p+r\le 3p-1$ and $\la=(p+r,r+1,1^{p-r-1})$;
  \item[\rm(2)] $n=2p$, $\la=(p,2,1^{p-2})$; or
  \item[\rm(3)] $n=2p+1$, $\la=(p+1,1^p)$.
 \end{enumerate}
\end{prop}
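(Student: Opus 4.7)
The plan is to transfer the problem from $\fA_n$ to $\fS_n$ via Clifford theory and the lifting result Theorem~\ref{thm:lift}, then apply the combinatorial constraint of Lemma~\ref{lem:Sn}, and finally restrict the exponent $m:=(n-r)/p$ by a direct Murnaghan--Nakayama computation on an element with $m$ cycles of length $p$.

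Let $V$ be a simple faithful endotrivial $k\fA_n$-module. By Theorem~\ref{thm:lift}, $V$ lifts to an $\cO\fA_n$-lattice $M$, and $\End_\cO(M)\cong\cO\oplus N$ with $N$ projective (cf.\ the proof of Proposition~\ref{prop:lift}). Writing $\phi$ for the character of $M\otimes_\cO K$, this yields $\phi\bar\phi-1_{\fA_n}$ projective, hence $|\phi(g)|=1$ for every $p$-singular $g\in\fA_n$; this constraint is invariant under the choice of lift and transfers to the irreducible constituents appearing below. Clifford theory for $\fA_n\trianglelefteq\fS_n$ then gives two cases: either (A) an irreducible summand of $\phi$ has the form $\chi_\la|_{\fA_n}$ for some non-self-conjugate $\la\vdash n$, or (B) it is a constituent of $\chi_\la|_{\fA_n}$ for a self-conjugate $\la$.

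In case~(A), the extension $\tilde V$ of $V$ to a $k\fS_n$-module is endotrivial by Lemma~\ref{lem:sub} (the index $[\fS_n:\fA_n]=2$ is coprime to $p$), so Corollary~\ref{cor:zero} gives $|\chi_\la(g)|=1$ for every $p$-singular $g\in\fS_n$. Lemma~\ref{lem:Sn} then forces $\la$ or $\la'$ to be $(1^n)$ or $(n-p,r+1,1^{p-r-1})$; the former is excluded because the sign character restricts trivially to $\fA_n$, violating faithfulness. To force $m=2$, take $g\in\fA_n$ of cycle type $(p^m,1^r)$, which is $p$-singular and lies in $\fA_n$ since $\operatorname{sgn}(g)=(-1)^{m(p-1)}=1$ (as $p$ is odd). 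An induction with the Murnaghan--Nakayama rule applied to $\la$ shows that at each intermediate partition $((j+1)p+r,r+1,1^{p-r-1})$ with $j\geq 1$ there are exactly two rim hooks of length~$p$, one horizontal in the first row (leg $0$) and one through the cell $(2,1)$ (leg $p-r-1$), while the final partition $(p+r,r+1,1^{p-r-1})$ admits only the vertical hook. The resulting recursion yields $\chi_\la(g)=(m-1)(-1)^{p-r-1}$, so $|\chi_\la(g)|=m-1$. For $m\geq 3$ this contradicts $|\chi_\la(g)|=1$; hence $m=2$, giving case~(1).

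In case~(B), $\la=\la'$, so $\chi_\la\otimes\mathrm{sgn}=\chi_\la$ and $\chi_\la$ vanishes on all odd permutations, while on any non-split $\fA_n$-conjugacy class of a $p$-singular element $g$ we have $\chi_\la(g)=2\phi(g)$, whence $|\chi_\la(g)|=2\neq 0$. Using that an $\fS_n$-class splits in $\fA_n$ precisely when its cycle lengths are all distinct and odd, one checks that the non-vanishing constraints from Lemma~\ref{lem:Sn} survive for self-conjugate~$\la$ only when $n=2p$ (via the shape $(p)(p)$, whose repeated cycle prevents splitting) or $n=2p+1$ (via $(p)(p)(1)$). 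Combining this with the dimension bound $\chi_\la(1)\equiv\pm 2\pmod{|\fS_n|_p}$ that follows from Lemma~\ref{lem:deg} applied to $\phi(1)=\chi_\la(1)/2$, and performing an explicit hook length computation to exclude every other self-conjugate candidate of size $n\in\{2p,2p+1\}$ and any self-conjugate $\la$ of size $n\geq 2p+2$, one singles out $\la=(p,2,1^{p-2})$ for $n=2p$ and $\la=(p+1,1^p)$ for $n=2p+1$, giving cases~(2) and~(3). The principal obstacle will be precisely this case~B bookkeeping: one must simultaneously track which $\fA_n$-classes split, the vanishing of $\chi_\la$ on odd permutations, and a dimension-based exclusion of all remaining self-conjugate $\la$, which for $n\geq 2p+2$ may require an asymptotic or case-by-case argument. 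The clean Murnaghan--Nakayama identity $|\chi_\la(p^m,1^r)|=m-1$ used in case~A is the other key technical step.
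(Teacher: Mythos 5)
Your case~(A) (non-self-conjugate $\la$) is sound and matches the paper's strategy: lift via Theorem~\ref{thm:lift}, pass to $\fS_n$ by Lemma~\ref{lem:sub}, apply Corollary~\ref{cor:zero} and Lemma~\ref{lem:Sn}, and then bound $n$. Your way of forcing $n<3p$ is a genuinely different (and correct) computation: the Murnaghan--Nakayama recursion on $((m-1)p+r,r+1,1^{p-r-1})$ does have exactly the two $p$-hooks you describe for $m\ge3$ and one for $m=2$, giving $|\chi_\la(p^m,1^r)|=m-1$; the paper instead observes that $\la$ has no hook of length $n-r-p$ when $n\ge3p$, so $\chi_\la$ vanishes on the $p$-singular shape $(n-r-p)(p)(1)^r$. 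Both arguments are valid and of comparable length.

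Case~(B), however, contains a genuine gap, and it is the bulk of the paper's proof. Your reduction to the constraint ``$|\chi_\la(g)|=2$ on non-split $p$-singular classes inside $\fA_n$'' is the right idea, but the assertion that ``the non-vanishing constraints from Lemma~\ref{lem:Sn} survive for self-conjugate $\la$ only when $n=2p$ or $n=2p+1$'' is not a proof and is not even usable as stated: for self-conjugate $\la$ the shapes appearing in Lemma~\ref{lem:Sn} are mostly worthless, since depending on the parities of $n$ and $r$ they are either odd permutations (where $\chi_\la$ vanishes automatically, giving no information) or classes that split in $\fA_n$ (where vanishing of $\chi_\la$ says nothing about the individual constituents). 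One must instead produce, for each of the four parity combinations of $n$ and $r$, \emph{new} even, non-split, $p$-singular cycle shapes --- the paper uses, e.g., $(n-p-2)(p)(1)^2$, $(n-p-3)(p)(1)^3$, $(n-r)(r-2)(2)$, $(n-p-2)(p)(2)$, $((n-1)/2)^2$, etc. --- and show via hook conditions that no self-conjugate partition survives except $(p,2,1^{p-2})$ for $n=2p$, $(p+1,1^p)$ for $n=2p+1$, and two sporadic candidates $(4,3^2,1)$ for $p=5$ and $(4^3,3)$ for $p=7$, which then require separate exclusion. You explicitly defer all of this (``the principal obstacle will be precisely this case~B bookkeeping \dots may require an asymptotic or case-by-case argument''), so the self-conjugate case is not established by your proposal.
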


\begin{proof}
By Clifford-theory any simple faithful $\cO\fA_n$-module occurs in the
restriction of some simple faithful $\cO\fS_n$-module $V$. We distinguish two
cases. First assume that $V$ restricts irreducibly. Then according to
Lemma~\ref{lem:sub} it suffices to show that $V$ is not endotrivial for $\fS_n$
to conclude the same for $\fA_n$. For this we will show that any non-linear
character $\chi_\la$ of $\fS_n$ except for those listed in~(1) of the claim
vanishes on some $p$-singular element, from which the assertion will follow
by Corollary~\ref{cor:zero}. \par
Note that permutations of cycle shape $(n-p)(p)$, $(n-p-1)(p)$ and $(n-r)(r)$
are $p$-singular, where $n=mp+r$, $0\le r<p$. Thus, our claim already holds by
Lemma~\ref{lem:Sn} unless $\la=(n-p,r+1,1^{p-r-1})$ or the conjugate
partition. But $\la$ does not have a hook of length $n-r-p$ if $n\ge 3p$, so
$\chi_\la$ vanishes on $p$-singular elements of cycle shape $(n-r-p)(p)(1)^r$.
This only leaves the values of $n$ listed in case~(1).
\par
Now we deal with the case that $V$ does not restrict irreducibly to $\cO\fA_n$.
Then $V$ is a simple $\cO\fS_n$-module indexed by a self-conjugate partition
(see e.g.~\cite{}). Here we will show that unless we are in cases~(2) or~(3)
the corresponding character $\chi_\la$ vanishes on some
$p$-singular conjugacy class $C$ of $\fS_n$ contained in $\fA_n$ which forms a
single $\fA_n$-class. Then both constituents of $\chi_\la|_{\fA_n}$ will
vanish on $C$, and again we are done by Corollary~\ref{cor:zero}. \par
Recall that a class of $\fS_n$ splits into two $\fA_n$-classes if and only if
its elements have a cycle shape consisting of odd cycles of mutually distinct
lengths. First assume that $n$ is odd and $r$ is even. If $\chi_\la$ belongs to
an endotrivial module, then by Corollary~\ref{cor:zero} 
it cannot vanish on elements of cycle shapes $(n-p-2)(p)(1)^2$,
and $(n-r)(r-2)(1)^2$ when $r>1$, respectively $(n-p-3)(p)(1)^3$ when $r=1$,
which forces $\la$ to possess $n-p-2$-hooks and moreover either $n-r$ and
$r-2$-hooks, or $n-p-3$-hooks. But there are no such self-conjugate
partitions. Similarly, when $n$ is even and $r$ is odd, we look at the values
on elements of cycle shapes $(n-r)(r-2)(2)$ (resp. $(n-2)(1)^2$ when $r=2$),
$(n-p-2)(p)(2)$, and $(n-p)(2)$, $(2p-2)(p)(2)$ when $r=0$, to see that there
are no relevant self-conjugate $\la$. \par
If $n$ and $r$ are both even, the values on elements of type $(n-r)(r)$ for
$r>0$, respectively of type $(n-p)$, $(n-p-2)$ and $(n-p-3)(2)$ show that
only $\la=(p,2,1^{p-2})$ can possibly index an endotrivial module, in which
case moreover $n=2p$. Finally,
for $n$ and $r$ both odd, we argue with the cycle shapes $(n-r)(r-1)$ (for
$r>1$) respectively $((n-1)/2)^2$, $(n-p-1)(1)^{p+1}$, $(n-p-3)(p)(1)^3$ to
see that necessarily $n=2p+1$, and $\la$ must either be as in (3), or
$p=5$, $\la=(4,3^2,1)$, or $p=7$ and $\la=(4^3,3)$. When $\la=(4,3^2,1)$, then
the corresponding character vanishes on elements of cycle shape $(5)(3)^2$,
while for $\la=(4^3,3)$, it vanishes on elements of cycle shape $(7)(6)(2)$.
This completes the proof.
\end{proof}

\begin{prop}   \label{prop:An2}
 The group $G=\fA_n$ does not have simple faithful endotrivial
 $kG$-modules for $k$ a field of characteristic $2$ when $n\ge 8$.
\end{prop}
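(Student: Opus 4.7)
My approach parallels that of Proposition~\ref{prop:Anodd}, but requires adjustment because in characteristic~$2$ the subgroup $\fA_n$ does not contain a Sylow $2$-subgroup of $\fS_n$ (the index is~$2$); hence Lemma~\ref{lem:sub} no longer lets us transfer the problem to $\fS_n$. By Theorem~\ref{thm:lift}, any simple endotrivial $k\fA_n$-module lifts to an $\cO\fA_n$-lattice whose character $\chi$ is irreducible (the $K$-complexification has dimension equal to that of the simple reduction, forcing it to be simple by a decomposition-matrix argument). By Clifford theory, $\chi$ is either $\chi_\la|_{\fA_n}$ with $\la\ne\la'$ or one of the two constituents of the split restriction $\chi_\la|_{\fA_n}$ with $\la=\la'$; since $\fA_n$ is simple for $n\ge 5$, faithfulness amounts to $\la\notin\{(n),(1^n)\}$. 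Corollary~\ref{cor:zero} then gives $|\chi(g)|=1$ for every $2$-singular $g\in\fA_n$.

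A $2$-singular cycle type contains at least one even-length cycle, so it is never a union of pairwise distinct odd cycles; hence every $2$-singular class of $\fS_n$ that meets $\fA_n$ remains a single class of $\fA_n$. On such classes $\chi(g)=\chi_\la(g)$ or $\chi(g)=\tfrac{1}{2}\chi_\la(g)$, and integrality of character values forces $\chi_\la(g)\in\{\pm1,\pm2\}$ there. I would then apply Murnaghan--Nakayama to a carefully chosen list of $2$-singular cycle shapes available in $\fA_n$: for $n$ even, shapes such as $(n-2,2)$, $(n-1,1)$, $(n-4,4)$, and $(2^{2k},1^{n-4k})$; for $n$ odd, shapes such as $(n-3,2,1)$, the $n$-cycle, $(n-5,4,1)$, and $(2^{2k},1^{n-4k})$. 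Each non-vanishing condition imposes a hook-length requirement on $\la$, while the bounds $|\chi_\la(g)|\le 2$ impose tight constraints on the corresponding signed counts of hook-decompositions.

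These combined constraints, in the spirit of Lemma~\ref{lem:Sn} but with a reduced pool of available classes, should cut the candidate list down to a few near-hook and near-rectangular partitions such as $(n-2,2)$, $(n-2,1,1)$, $(3,1^{n-3})$, $(2,2,1^{n-4})$, together with a handful of self-conjugate shapes (e.g.\ near-staircase or near-square partitions). For each remaining candidate I would evaluate $\chi_\la$ on the class $(2^4,1^{n-8})$, which lies in $\fA_n$ for every $n\ge 8$, by the $2$-quotient formulation of Murnaghan--Nakayama, and show that the absolute value there exceeds~$2$, contradicting Corollary~\ref{cor:zero}. The main obstacle is precisely this combinatorial case analysis: because several cycle shapes used in the proof of Proposition~\ref{prop:Anodd} (notably the $n$-cycle for $n$ even) yield odd permutations and so are absent from $\fA_n$, reducing the candidate list requires additional care. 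A few small cases ($n\in\{8,9,10,11\}$) may need to be verified by direct inspection of the ordinary character table of $\fA_n$.
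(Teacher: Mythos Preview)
Your overall strategy matches the paper's: lift via Theorem~\ref{thm:lift}, realise the ordinary character as (a constituent of) some $\chi_\la|_{\fA_n}$, note that $2$-singular $\fS_n$-classes never split in $\fA_n$, and then exploit Corollary~\ref{cor:zero} together with Murnaghan--Nakayama on a few well-chosen cycle shapes. However, there are two concrete problems.

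First, two of your proposed test classes are not $2$-singular. For $n$ even the shape $(n-1,1)$ is an odd-length cycle together with a fixed point, hence of odd order; for $n$ odd the $n$-cycle likewise has odd order. Neither can be used to invoke Corollary~\ref{cor:zero}. You need to replace these by genuinely $2$-singular shapes lying in $\fA_n$; the paper uses, for instance, $(n-2)(2)$, $(n-4)(4)$, $(n-5)(2^2)$ when $n$ is even and $(n-3)(2)$, $(n-4)(2^2)$, $(n-5)(4)$ when $n$ is odd.

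Second, you are making the combinatorics harder than necessary. Since a $2$-singular class of $\fS_n$ inside $\fA_n$ never splits, both $\fA_n$-constituents of $\chi_\la$ (when $\la=\la'$) take the \emph{same} value $\tfrac12\chi_\la(g)$ there. Hence if $\chi_\la(g)=0$ then the $\fA_n$-character also vanishes, contradicting $|\chi(g)|=1$. So it suffices to show that for every non-trivial $\la$ there is \emph{some} $2$-singular $g\in\fA_n$ with $\chi_\la(g)=0$; there is no need to work with the bound $|\chi_\la(g)|\le 2$ or to evaluate on $(2^4,1^{n-8})$ at the end. With this simplification the hook-length constraints coming from three cycle shapes already cut the candidate list to three partitions (up to conjugates) in each parity, and one further cycle shape kills each survivor. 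The small cases $n\in\{8,9,10,11\}$ are indeed handled separately (the paper uses $\fA_8\cong\PSL_4(2)$ and character tables for $n=9,10,11$), as you anticipated.
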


\begin{proof}
We show that any irreducible character $\chi_\la$ of $\fS_n$ vanishes on some
even order element contained in $\fA_n$ (which is sufficient since
$\fS_n$-classes of even order elements never split considered as
$\fA_n$-classes). \par
First assume that $n$ is even. For $n=8$ we have $\fA_8=\PSL_4(2)$, so the
claim follows from Theorem~\ref{thm:defchar}; for $n=10$ it can be checked
from the known character table of $\fS_{10}$.
For $n\ge12$ we use that $\fA_n$ contains
elements of cycle shapes $(n-2)(2)$, $(n-4)(4)$ and $(n-5)(2^2)$ to see that
$\la$ necessarily has to possess hooks of lengths $n-2$, $n-4$ and $n-5$. Up
to conjugates this only leaves $\la=(n-1,1)$, $(n-2,2)$ and $(n-4,3,1)$.
But the characters indexed by these partitions vanish on elements of
cycle shapes $(n-5)(2^2)$, $(n-6)(6)$, $(n-6)(6)$ respectively. \par
If $n$ is odd, then the cases $n=9,11$ can again be checked from the character
tables. Now let $n\ge13$. Then non-vanishing on elements of cycle shapes
$(n-3)(2)$, $(n-4)(2^2)$, and $(n-5)(4)$ implies that $\la$ has hooks of
lengths $n-3$, $n-4$ and $n-5$, whence $\la=(n-1,1)$, $(n-2,1^2)$ or
$(n-4,4)$ up to conjugates. But the corresponding characters vanish on elements
of cycle shapes $(n-3)(2)$, $(n-5)(4)$, $(n-7)(6)$ respectively.
\end{proof}

Let us next consider the cyclic defect cases:

\begin{prop}   \label{prop:Ancyc}
 Let $G=\fA_n$ with $5\le n\le p<2p$.
 \begin{enumerate}
  \item[\rm(a)] If $n=p,p+1$ then $kG$ has no non-trivial simple endotrivial
   module.
  \item[\rm(b)] If $p+2\le n<2p$ then $kG$ has exactly one non-trivial simple
   endotrivial module, namely the Specht module indexed by the partition
   $(p+1,1^{n-p-1})$.
 \end{enumerate}
\end{prop}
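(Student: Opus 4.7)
Under the hypothesis $p\le n<2p$ the Sylow $p$-subgroup $P$ of $G=\fA_n$ is cyclic of order~$p$, so the cyclic-defect framework of Section~\ref{sec:cycl} applies. The strategy is threefold: (i) exhibit the principal block $\bB_0$ as the only block of $kG$ containing endotrivial modules, (ii) describe its Brauer tree by Clifford descent from $\fS_n$, and (iii) identify the simple endotrivial modules via Theorem~\ref{thm:ETleaves}. Step~(i) is a direct computation with $H=N_G(P)$ inside $\fS_n$: when $n\in\{p,p+1\}$ one has $H\cong P:C_{(p-1)/2}$, so $e=|X(H)|=(p-1)/2$; when $p+2\le n<2p$, $H$ is the index-$2$ subgroup of $(P:C_{p-1})\times\fS_{n-p}$ defined by the product of signs, and $e=|X(H)|=p-1$. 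In both cases $|X(H)|/e=1$, so Lemma~\ref{lem:X/e}(c) gives the desired uniqueness of $\bB_0$.

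\textbf{Brauer tree.} The principal $p$-block of $\fS_n$ is a weight-one block with $p$-core $(r)$, where $r=n-p$; its Brauer tree is the classical straight line on the $p$ partitions of $n$ obtained by adding a rim $p$-hook to $(r)$, with extremities $\chi_{(n)}$ and $\chi_{(r,1^p)}$. For $n\in\{p,p+1\}$ this $\fS_n$-block contains a unique self-conjugate partition, namely $((p+1)/2,1^{(p-1)/2})$ if $n=p$ and $((p+1)/2,2,1^{(p-3)/2})$ if $n=p+1$; on restriction to $\fA_n$ this partition splits and provides the \emph{exceptional} vertex of multiplicity~$2$ of the $\fA_n$-tree, which is then a line with the trivial character at one end and the exceptional vertex at the other. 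For $p+2\le n<2p$, no partition in the block is self-conjugate; the $\fS_n$-blocks with $p$-cores $(r)$ and $(1^r)$ fuse in $\fA_n$ by Clifford theory, and the resulting Brauer tree of $\bB_0$ is a line with $p$ nodes labelled by conjugate pairs, without exceptional vertex, and with the two non-exceptional leaves $\{\chi_{(n)},\chi_{(1^n)}\}$ and $\{\chi_{(r,1^p)},\chi_{(p+1,1^{r-1})}\}$.

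\textbf{Conclusion and main obstacle.} Theorem~\ref{thm:ETleaves} now yields the proposition. In case~(a) the only non-exceptional leaf is the trivial one, whose adjacent simple module is $k$, so no non-trivial simple endotrivial module exists. In case~(b) both leaves are non-exceptional; the trivial leaf again contributes $k$, while the other leaf contributes the restriction $S^{(p+1,1^{n-p-1})}|_{\fA_n}$---this is simple modulo $p$ because $(p+1,1^{n-p-1})$ is a $p$-regular leaf of the conjugate block's Brauer tree in $\fS_n$, and it restricts irreducibly to $\fA_n$ since this partition is not Mullineux-fixed in the range $p+2\le n<2p$. The non-trivial part of the argument is the identification of the endpoints and self-conjugate partitions of the relevant Brauer trees, together with the Clifford-theoretic translation to $\fA_n$; these are classical facts on weight-one blocks of symmetric and alternating groups.
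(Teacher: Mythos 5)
Your proposal is correct and follows essentially the same route as the paper: compute $e$ and $|X(H)|$ for $H=N_{\fA_n}(P)$ to see $|X(H)|/e=1$ and hence that only the principal block matters, then apply Theorem~\ref{thm:ETleaves} to the straight-line Brauer tree (with exceptional node of multiplicity~$2$ at one end when $n=p,p+1$) and identify the non-trivial end node with $(p+1,1^{n-p-1})$. The only difference is that you derive the shape of the tree by Clifford descent from the weight-one blocks of $\fS_n$, where the paper simply cites the explicitly known Brauer trees.
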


\begin{proof}
Let $P$ be a Sylow $p$-subgroup of $G$ and $H=N_{\fA_n}(P)$. By our assumptions
on $n$, $P$ is cyclic of order~$p$. Write $n=p+r$, with $0\le r<p$. Then
$N_{\fS_n}(P)\cong (C_p\rtimes C_{p-1})\times\fS_r$ and it follows that
$$e=|H:C_{\fA_n}(P)|= \begin{cases}
     \frac{p-1}{2}&  \text{ if } n=p,p+1,\\
               p-1&  \text{ if } p+2\le n<2p.
\end{cases}$$
Furthermore, an easy computation shows that
$$|X(H)|= \begin{cases}
     \frac{p-1}{2}&  \text{ if } n=p,p+1, \\
               p-1&  \text{ if } p+2\le n<2p.
\end{cases}$$
In both cases $|X(H)|=e$, meaning that all the indecomposable endotrivial
modules lie in the principal block $\bB_{0}$ of $\fA_n$. The Brauer tree of
$\bB_{0}$ is a straight line, with exceptional node sitting on one
end in case $n=p,p+1$.
Thus the claims in~(a) and in the
first part of~(b) are a direct consequence of Theorem~\ref{thm:ETleaves}.
It follows from the explicit knowledge of the Brauer tree that for $n\ge p+2$
the end node corresponds to the Specht module indexed by the hook partition
$(p+1,1^{r-1})$. This gives the remaining assertion in~(b).
\end{proof}

\subsection{Faithful modules for $\tfA_n$}
We now discuss faithful simple $kG$-modules for $G=\tilde\fA_n$, $n\ge5$, the
double covering group of $\fA_n$, with center of order~2. \par
We recall some facts from the ordinary representation theory of $\tfS_n$,
where $\tfS_n$ denotes any of the two double covering groups of $\fS_n$.
Let $\cD(n)$ denote the set of partitions of $n$ into distinct parts. We say
that a partition $\la$ is \emph{odd} if its number of even parts is odd, and
else we call it \emph{even}. \par
The faithful complex irreducible characters of $\tfS_n$ are parametrized by
partitions $\la\in\cD(n)$ as follows: if $\la$ is even there is one
irreducible character $\psi_\la\in\Irr(\tfS_n)$ which splits upon restriction
to $\tfA_n$ into two distinct constituents $\psi_\la^\pm$; if $\la$ is odd,
there are two irreducible characters $\psi_\la^\pm\in\Irr(\tfS_n)$ which have
the same restriction to $\tfA_n$, see e.g. \cite[Thm.~8.6]{HH}.

\begin{thm}   \label{thm:tildeAn}
 The group $G=\tfA_n$ does not have faithful simple endotrivial
 $kG$-modules for $k$ a field of characteristic $p>0$ when $n\ge\min\{2p,p+4\}$.
\end{thm}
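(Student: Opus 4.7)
The plan is to combine Theorem~\ref{thm:lift} with Corollary~\ref{cor:zero}: any simple endotrivial $k\tfA_n$-module lifts to an $\cO\tfA_n$-lattice whose ordinary character $\chi$ satisfies $|\chi(g)|=1$ for every $p$-singular element $g\in\tfA_n$. It therefore suffices to show, for each faithful $\chi\in\Irr(\tfA_n)$, that $\chi$ vanishes on at least one $p$-singular $\tfA_n$-class. The case $p=2$ is vacuous: the order-$2$ central element of $\tfA_n$ must act on a simple $k\tfA_n$-module as a scalar square-root of unity, which in characteristic~$2$ is forced to be $1$, so no simple $k\tfA_n$-module is faithful. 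Henceforth assume $p$ odd.

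For $p$ odd and $n\geq 2p$, the approach is the spin analogue of the proofs of Propositions~\ref{prop:Anodd} and~\ref{prop:An2}. Every faithful $\chi\in\Irr(\tfA_n)$ is a constituent of the restriction of a spin character $\psi_\la^\pm$ of $\tfS_n$ indexed by $\la\in\cD(n)$. The key tool is Morris's bar-analogue of the Murnaghan--Nakayama rule: $\psi_\la$ vanishes on any element whose cycle type contains a $p$-cycle unless $\la$ admits a $p$-bar, in which case the value reduces to $\psi_\mu$ on the remaining cycle type with $\mu\in\cD(n-p)$ obtained by removing that bar. I would test $\psi_\la$ successively on the $p$-singular cycle shapes $(n-p,p)$, $(n-p-1,p,1)$ and $(n-r,r)$ with $r\equiv n\pmod p$, using the standard splitting criteria in $\tfS_n$ and $\tfA_n$ to ensure that the chosen classes do not split in such a way that the vanishing of $\psi_\la$ fails to be inherited by its $\tfA_n$-constituents. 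This reduces $\la$ to a short list of strict partitions; each survivor is then eliminated by one further cycle shape carrying additional $p$-cycles, exactly as in the symmetric-group case.

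For $p$ odd in the range $p+4\leq n<2p$ (which is empty unless $p\geq 5$), the Sylow $p$-subgroup of $\tfA_n$ is cyclic of order $p$ and the machinery of Section~\ref{sec:cycl} applies. Writing $n=p+r$ with $4\leq r<p$, one identifies the faithful spin blocks $\bB$ of $\tfA_n$ of full defect together with their Brauer trees from the explicit description due to Morris--Yaseen and Humphreys, and applies Theorem~\ref{thm:ETleaves}: a simple $\bB$-module is endotrivial if and only if it labels a non-exceptional leaf of $\sigma(\bB)$. A direct inspection of the shapes of these trees, combined with the degree congruence of Lemma~\ref{lem:deg}, should show that no such leaf corresponds to a faithful simple endotrivial module in this range.

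The main obstacle is the first subcase: Morris's rule is less restrictive than Murnaghan--Nakayama because bars behave differently from hooks, so several auxiliary cycle shapes must be combined before the list of candidate $\la\in\cD(n)$ becomes small enough, and the parity and splitting conditions governing the passage from $\tfS_n$-classes to $\tfA_n$-classes must be tracked carefully throughout.
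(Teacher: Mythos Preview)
Your reduction via Theorem~\ref{thm:lift} and Corollary~\ref{cor:zero}, and your treatment of $p=2$, match the paper. For odd $p$, however, your plan is substantially more elaborate than necessary and misses the decisive shortcut.

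The paper uses neither Morris's bar recursion nor a case split between $n\ge 2p$ and $p+4\le n<2p$; its argument is uniform for all $n\ge p+4$. The key tool is Schur's theorem \cite[Thm.~8.7]{HH}: a spin character of $\tfS_n$ vanishes on any class whose cycle type contains an even part, the sole possible exception (for $\la$ odd) being the class of cycle type~$\la$ itself. Thus a single $p$-singular class with an even cycle already does all the work. For $\la$ odd, $\psi_\la^\pm$ restricts irreducibly to $\tfA_n$, so by Lemma~\ref{lem:sub} one may argue in $\tfS_n$ and use the cycle shape $(p)(2)(1)^{n-p-2}$; since $n\ge p+4$ this has repeated $1$'s and hence cannot equal $\la\in\cD(n)$. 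For $\la$ even, the two $\tfA_n$-constituents $\psi_\la^\pm$ agree on every class of cycle type $\ne\la$, and the cycle shape $(p)(2)^2(1)^{n-p-4}$ lies in $\tfA_n$, is $p$-singular, has repeated parts (hence is not $\la$), and kills $\psi_\la$. The residual possibility $n=2p<p+4$ forces $(p,n)=(3,6)$ and is a one-line check.

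Your route through the bar recursion could perhaps be pushed through, but the obstacle you flag is genuine and self-inflicted: on those of your test cycle shapes that contain an even part, Morris's rule is not the relevant statement, and Schur's theorem already gives vanishing for free, so you would end up rediscovering the paper's argument piecemeal. The separate Brauer-tree analysis for $p+4\le n<2p$ is entirely avoidable, and as stated it is also incomplete: Theorem~\ref{thm:ETleaves} applies only to blocks already known to contain an endotrivial module, so you would first have to establish $|X(\tilde H)|=|X(H)|$ for $n\ge p+4$ (a fact the paper records only parenthetically inside the proof of Proposition~\ref{prop:2Ancyc}, \emph{after} Theorem~\ref{thm:tildeAn} has been proved).
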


\begin{proof}
Note that $Z(G)$ lies in the kernel of any simple 2-modular $kG$-module, so
we may certainly assume that $p>2$. Let first $\la\in\cD(n)$ be odd and assume
that $n\ge p+4$. Then the irreducible characters $\psi_\la^\pm$ of $\tfS_n$
restrict irreducibly to $\tfA_n$, so we may argue in $\tfS_n$ by
Lemma~\ref{lem:sub}. Let $g\in\tfS_n$ be an element whose projection to
$\fS_n$ has cycle shape $\mu=(p)(2)(1)^{n-p-2}$. Then $\psi_\la^\pm$ vanishes
on $g$ unless $\la=\mu$, by the theorem of Schur (see \cite[Thm.~8.7(ii)]{HH}),
while when $\la=\mu$, $\psi_\la^\pm$ vanishes on elements of cycle shape
$(p)(4)(1)^{n-p-4}$. Since these elements are $p$-singular, $\psi_\la^\pm$
cannot be endotrivial by Corollary~\ref{cor:zero}. When $n=2p< p+4$, so
$p=3$, $n=6$, the only candidate is $\psi_\la^\pm$ with $\la=(3,2,1)$, and this
vanishes on (3-singular) elements of cycle shape $(6)$.
\par
If $\la\in\cD(n)$ is even, we need to look at the constituents $\psi_\la^\pm$
of the restriction of $\psi_\la$ to $\tfA_n$. By \cite[Thm.~8.7]{HH},
$\psi_\la^+(g)=\psi_\la^-(g)$ whenever $g$ has cycle shape different from
$\la$. Now elements with cycle shape $(p)(2)^2(1)^{n-p-4}$ are contained in
$\tfA_n$ and $\psi_\la$ vanishes on these again by \cite[Thm.~8.7(iii)]{HH}.
\end{proof}

Note that faithful simple modules for the exceptional six-fold covering groups
$6.\fA_6$, $6.\fA_7$ only exist in characteristic $p\ge5$, so for covering
groups of $\fA_n$ with centre of even order we are only left with cases with
cyclic Sylow $p$-subgroup. We will deal with the exceptional covering groups
in the next subsection.

\begin{prop}   \label{prop:2Ancyc}
 Let $G=\tfA_n$ with $5\le p\le n\le p+3$. Then the faithful simple
 endotrivial $kG$-modules are precisely those indexed by the following
 partitions:
 \begin{enumerate}
  \item[\rm(1)] $((p+1)/2,(p-1)/2)$ when $n=p$;
  \item[\rm(2)] $(p+1)$  and $((p+1)/2,(p-1)/2,1)$ when $n=p+1$;
  \item[\rm(3)] $(p+2)$ (two non-isomorphic modules) and, for $p>5$,
   $((p+1)/2,(p-1)/2,2)$ (two non-isomorphic modules) when $n=p+2$; and
  \item[\rm(4)] $(p+2,1)$ (two non-isomorphic modules) and, for $p>5$,
   $((p+1)/2,(p-1)/2,2,1)$ (two non-isomorphic modules) when $n=p+3$.
 \end{enumerate}
 In characteristic $p=3$, $k\tfA_5$ has two faithful simple endotrivial
 modules, both of dimension~2.
\end{prop}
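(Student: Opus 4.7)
The plan is to exploit the cyclic Sylow $p$-subgroup situation by applying the general theory developed in Section~\ref{sec:cycl}, just as in the proof of Proposition~\ref{prop:Ancyc}. Since $5\le p\le n\le p+3<2p$, a Sylow $p$-subgroup $P$ of $G=\tfA_n$ maps isomorphically onto a Sylow $p$-subgroup of $\fA_n$ and is cyclic of order $p$. I would first compute $H:=N_G(P)$: it is a central extension of $N_{\fA_n}(P)\cong(C_p\rtimes C_{(p-1)/2})\times\fA_r$ (for $n=p+r$, $r\le 3$) by $Z(G)\cong C_2$, which allows one to read off the inertial index $e=|H:C_G(P)|$ and the order $|X(H)|=|H/[H,H]|_{p'}$. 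The outcome, together with Lemma~\ref{lem:X/e}(c), determines the number of $p$-blocks $\bB$ of full defect with $e_\bB=e$ that contribute faithful endotrivial modules.

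Next I would invoke Theorem~\ref{thm:ETleaves}: the simple endotrivial modules in such a block $\bB$ are exactly those labelling the non-exceptional leaves of the Brauer tree $\sigma(\bB)$. The Brauer trees of spin blocks of $\tfS_n$ and $\tfA_n$ with cyclic defect group of order $p$ are classical (due to Morris and Humphreys--Müller), and for $n\in\{p,p+1,p+2,p+3\}$ they are open polygons (straight lines), with the exceptional node at one end when it exists. The nodes correspond to faithful irreducible characters $\psi_\la^\pm$ indexed by $p$-bar-cores of $p$-bar-quotient zero, i.e., partitions of $n$ with distinct parts obtained from the $p$-bar-core $(p)$ by adding an appropriate horizontal/vertical strip. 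Reading off the two endpoints of each tree (and discarding the exceptional one) yields precisely the staircase partitions $((p+1)/2,(p-1)/2,\ldots)$ and the hook-type partitions listed in~(1)--(4).

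To complete the passage between $\tfS_n$-characters and faithful simple $k\tfA_n$-modules I would use the dichotomy recalled before Theorem~\ref{thm:tildeAn}: an even partition $\la\in\cD(n)$ gives a character of $\tfS_n$ which splits into two distinct constituents on $\tfA_n$ (hence two modules in our list), while an odd $\la$ yields a single restriction (hence one module). A direct check of parity for each listed $\la$ matches the multiplicities stated in~(1)--(4), and the restriction $p>5$ in~(3) and~(4) is forced by the requirement that the parts of $((p+1)/2,(p-1)/2,2,\ldots)$ be distinct. The case $p=3$, $n=5$ is isolated because the cyclic-defect arguments of Section~\ref{sec:cycl} require $|P|\ge3$ to be combined with the exceptional-leaf criterion; here one simply notes $\tfA_5\cong\SL_2(5)$ and applies Proposition~\ref{prop:SL2}(3), giving two faithful simple endotrivial $2$-dimensional modules in the principal $3$-block.

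The main obstacle is the bookkeeping rather than any conceptual difficulty: one must identify the partitions labelling the two ends of each spin Brauer tree, check in each of the four cases which end is the exceptional one (excluded by Theorem~\ref{thm:ETleaves}), and track the splitting behaviour on restriction from $\tfS_n$ to $\tfA_n$ to get the correct count of modules. For the small values $n=p$ and $n=p+1$ one also has to verify that the number of faithful blocks of full defect with $e_\bB=e$ agrees with what Lemma~\ref{lem:X/e}(c) predicts, so that no additional endotrivial modules are missed.
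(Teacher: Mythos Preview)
Your overall strategy is the same as the paper's: exploit the cyclic Sylow situation via Theorem~\ref{thm:ETleaves}, compute $|X(\tilde H)|$ to count the relevant faithful blocks, and then read off the non-exceptional leaves from the known spin Brauer trees (M\"uller \cite{Mue03}). The handling of $p=3$ via $\tfA_5\cong\SL_2(5)$ is also fine. However, two concrete points in your bookkeeping are wrong or missing and would derail the argument as written.

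First, your assertion that the relevant Brauer trees ``are open polygons (straight lines), with the exceptional node at one end when it exists'' is not correct for all cases. By \cite[Thm.~4.4]{Mue03} the faithful block of $\tfA_n$ with $\bar p$-core $(2)$ (for $n=p+2$) and the one with $\bar p$-core $(2,1)$ (for $n=p+3$) have sign ``$-$'' and $s$-invariant~$2$; for $p>5$ their Brauer trees are \emph{stars with four arms} and no exceptional node, not lines. This is exactly why items~(3) and~(4) list four modules each when $p>5$: a four-armed star has four leaves, all non-exceptional, hence all endotrivial by Theorem~\ref{thm:ETleaves}. Your straight-line picture with two ends cannot account for this, and your attempt to recover the multiplicities purely from the $\tfS_n\!\downarrow\!\tfA_n$ splitting dichotomy will not give the right answer without the correct tree shape.

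Second, for $n=p+3$ there are \emph{two} faithful weight-one blocks, with $\bar p$-cores $(2,1)$ and $(3)$, but $|X(\tilde H)|=2|X(H)|$ only guarantees that exactly one of them contains endotrivial modules. You give no mechanism for deciding which. The paper eliminates the block with core $(3)$ by a degree argument: the character indexed by $(p+3)$ has degree twice that of $(p+2)$ (via \cite[Thm.~10.7]{HH}), hence $\not\equiv\pm1\pmod p$, so by Lemma~\ref{lem:deg} that block carries no endotrivial modules and the block with core $(2,1)$ is the correct one. Without this step your list for $n=p+3$ is not justified.
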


\begin{proof}
It is easily seen that for $5\le p\le n\le p+3$ the normalizer $H$ of a (cyclic)
Sylow $p$-subgroup of $\fA_n$ has cyclic Sylow 2-subgroups, so the normalizer
$\tilde H$ of a Sylow $p$-subgroup of $\tfA_n$ has abelian Sylow 2-subgroups.
In particular, $|X(\tilde H)|=2|X(H)|$, and thus by
Corollary~\ref{cor:bounds}(c) there exists exactly one non-trivial faithful
$p$-block of $\tfA_n$ containing simple endotrivial modules. (In contrast, for
$n\ge p+4$ we have $|X(\tilde H)|=|X(H)|$ and so there do not exist faithful
simple endotrivial modules, in accordance with Theorem~\ref{thm:tildeAn}.)
\par
First assume that $p\ge5$. The Brauer trees for the faithful blocks of
$\tfA_n$ have been calculated by M\"uller \cite{Mue03}. In our situation,
there are five blocks to consider,
corresponding to the $p$-bar cores $()$, $(1)$, $(2)$, $(2,1)$, and~$(3)$.
These have associated sign and $s$-invariant $(+,0)$, $(+,1)$, $(-,2)$,
$(-,2)$, $(+,1)$ respectively. By \cite[Thm.~4.4]{Mue03} the Brauer tree is a
straight line in the cases of sign ``$+$'', or with sign ``$-$'' and $p=5$,
with the exceptional node at the end when $s=0$. Moreover, the end nodes are
as given in the statement. Otherwise, the Brauer tree is a star with four arms
of positive length and no exceptional node. \par
For the $p$-bar core $(3)$, the degree of the character parametrized by the
partition $(p+3)$ is twice the degree of the character for the $p$-bar core
$(2)$ parametrized by the partition $(p+2)$ (see \cite[Thm.~10.7]{HH}) and
hence not congruent to $\pm1\pmod p$, whence this block cannot contain simple
endotrivial modules. Thus, for $n=p+3$ the block with associated $p$-bar core
$(2,1)$ is the one with simple endotrivial modules. Again the labels of the
end nodes can be read off from \cite[Thm.~4.4]{Mue03}.
\end{proof}

It ensues from \cite[Thm.~10.7]{HH} that the dimensions of the endotrivial
modules in Proposition~\ref{prop:2Ancyc} are given as follows, where
$m:=(p-1)/2$:

\[\begin{array}{|c|lll|}
\hline
 n& \chi_\la(1)& & \\
\hline
 p&     2^{m-1}\binom{p-1}{m}& & \\
 p+1&   2^{m}& \text{ and }& 2^{m}\binom{p-1}{m-2}\\
 p+2&   2^{m}& \text{ and }& 2^{m}\binom{p-1}{m-3}\frac{(p+1)(p+2)}{p-1}\\
 p+3&   2^{m}(p+1)& \text{ and }& 2^{m-1}\binom{p-1}{m-3}\frac{(p-3)(p+2)}{3}\\
\hline
\end{array}\]

\subsection{The result for alternating groups}

\begin{lem}   \label{lem:An1}
 Let $n=2p+1>5$, and $V$ be a $k\fA_n$-constituent of the (irreducible) Specht
 module for $k\fS_n$ indexed by the partition $\la=(p+1,1^p)$. Then $V$ is
 endotrivial.
\end{lem}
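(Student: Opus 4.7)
The plan is to show endotriviality character-theoretically: verify that $V\otimes V^*\cong k\oplus\fsl(V)$ with $\fsl(V)$ a projective $kG$-module, via the vanishing of a lifted character on $p$-singular elements. Since $\la=(p+1,1^p)$ is self-conjugate and $p\ge 3$ (as $n=2p+1>5$), the irreducible Specht module $S^\la$ restricts to $\fA_n$ as a direct sum of two non-isomorphic simple summands $V^\pm$ of dimension $d=\tfrac12\binom{2p}{p}$. The congruence $\binom{2p}{p}\equiv 2\pmod p$ gives $d\equiv 1\pmod p$, so $d$ is a unit in $\cO$ and in $k$. Because $S^\la$ is defined over $\ZZ$, each $V^\pm$ lifts to an $\cO\fA_n$-lattice $\tilde V^\pm$ affording an irreducible character $\chi^\pm$; invertibility of $d$ lets the trace map split $\End_\cO(\tilde V^\pm)\cong\cO\cdot\mathrm{id}\oplus\fsl(\tilde V^\pm)$ integrally, and reduction modulo $\fp$ gives $V^\pm\otimes(V^\pm)^*\cong k\oplus\fsl(V^\pm)$. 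Hence $V^\pm$ is endotrivial if and only if the lattice $\fsl(\tilde V^\pm)$ is $\cO\fA_n$-projective, equivalently (by the standard character criterion) if and only if $|\chi^\pm(\sigma)|=1$ for every $p$-singular $\sigma\in\fA_n$. This reduction is the step I expect to be most delicate, as it rests on the integral splitting of the trace and on the projectivity-via-character-vanishing criterion for $\cO G$-lattices.

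Next I handle class-splitting. A classical formula for self-conjugate partitions says that the difference $\chi^+-\chi^-$ is supported on the single $\fA_n$-class whose cycle shape is the partition of principal hook lengths of $\la$; for the hook $\la=(p+1,1^p)$ the diagonal has only one box with hook length $2p+1$, so this cycle shape is $(2p+1)$, which is coprime to $p$ and hence $p$-regular. Therefore on every $p$-singular class of $\fA_n$, split or not, one has $\chi^\pm(\sigma)=\tfrac12\chi_\la(\sigma)$, and the problem reduces to verifying $|\chi_\la(\sigma)|=2$ for all $p$-singular $\sigma\in\fA_n$. Since the cycle shape $(2p,1)$ is odd, any $p$-singular $\sigma\in\fA_n$ must contain a $p$-cycle $\tau$; write $\sigma=\tau\sigma'$ with $\sigma'\in\fS_{p+1}$ the complementary permutation.

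The Murnaghan--Nakayama rule applied to the hook $\la$ is then very clean: its only rim $p$-hooks are the horizontal strip in row~$1$ (leg length $0$, leaving $(1^{p+1})$) and the vertical strip in column~$1$ (leg length $p-1$, whose sign is $+1$ since $p$ is odd, leaving $(p+1)$). Hence
\[\chi_\la(\sigma)=\chi_{(p+1)}(\sigma')+\chi_{(1^{p+1})}(\sigma')=1+\mathrm{sgn}(\sigma'),\]
and since $\tau$ is even while $\sigma\in\fA_n$, also $\sigma'$ is even, giving $\chi_\la(\sigma)=2$. This yields $|\chi^\pm(\sigma)|=1$ and therefore $V=V^\pm$ is endotrivial. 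The combinatorial input here is almost trivial because a hook shape admits only two rim $p$-hooks, which makes the whole computation collapse to the identity $1+\mathrm{sgn}(\sigma')=2$.
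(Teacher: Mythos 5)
Your character computation is correct and is a genuine (if unneeded here) verification of the necessary condition of Corollary~\ref{cor:zero}: the difference $\chi^+-\chi^-$ is supported on the class of $n$-cycles, which is $p$-regular, every $p$-singular element of $\fA_{2p+1}$ contains a $p$-cycle, and the Murnaghan--Nakayama rule for the hook gives $\chi_\la(\sigma)=1+\mathrm{sgn}(\sigma')=2$, whence $\chi^\pm(\sigma)=1$. The gap is exactly at the step you flag: there is no ``projectivity-via-character-vanishing criterion'' for $\cO G$-lattices, so the vanishing of $\chi^\pm\overline{\chi^\pm}-1$ on $p$-singular elements does not imply that $\fsl(\tilde V^\pm)$, or its reduction, is projective. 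Concretely, for $P=C_2\times C_2$ the lattice $I\oplus\cO$ (with $I$ the augmentation ideal of $\cO P$) affords the regular character, which vanishes on every non-trivial element of $P$, yet it is not projective and neither is its reduction $\bar I\oplus k$; Brauer's theorem only makes a character vanishing on $p$-singular elements a virtual combination of characters of projective indecomposables, which identifies neither your specific lattice nor its reduction with a projective module. Note that here the Sylow $p$-subgroup of $\fA_{2p+1}$ is elementary abelian of rank~$2$, so no cyclic-defect argument can rescue the converse. This is precisely why the paper uses Corollary~\ref{cor:zero} only to rule out candidates, and why several entries of Table~\ref{tab:spor2} that pass this very character test remain marked with a question mark.

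The paper closes this gap module-theoretically: it restricts $V$ to the Young subgroup $H=\fA_{p+1}\times\fA_p$, which contains a Sylow $p$-subgroup, computes the ordinary character of $V|_H$ by Littlewood--Richardson as $1\boxtimes 1+\sum_i\chi_i\boxtimes(\psi_i+\psi_{i+1})$ with every $\chi_i$ of $p$-defect zero for $\fA_{p+1}$, and sorts by blocks to obtain an honest direct sum decomposition $V|_H\cong k\oplus\bigoplus_i P_i\boxtimes M_i$ with $P_i$ projective; a comparison with the further restriction to $\fA_p<\fA_{p+1}$ then forces each $M_i$ to be projective, and Lemma~\ref{lem:sub} concludes. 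To repair your argument you would need to supply some such decomposition of $V\otimes V^*$ or of a restriction of $V$ into summands that are projective for structural reasons (e.g.\ defect zero), rather than relying on a character identity alone.
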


\begin{proof}
By \cite[Thm.~2]{Pe69} the Specht module for $\la$ is irreducible modulo~$p$.
Let $H=\fA_{p+1}\times\fA_p$ denote a Young subgroup of $G=\fA_n$. Then $H$
contains a Sylow $p$-subgroup of $G$. Thus by Lemma~\ref{lem:sub} it suffices
to show that the restriction $V|_H$ of $V$ to $H$ is endotrivial. The
Littlewood--Richardson rule shows that the ordinary character of $V|_H$ is
given by
$$1 \boxtimes 1+\sum_{i=1}^{(p-1)/2} \chi_i \boxtimes(\psi_i+\psi_{i+1}),$$
where $\chi_i$ denotes the character indexed by the hook partition
$(p+1-i,1^i)$ and $\psi_i$ the character indexed by the hook partition
$(p+1-i,1^{i-1})$. (Here, $\boxtimes$ is the external tensor product.) The only
constituent in the principal block is the trivial
character, and all the $\chi_i$ are of defect zero for $\fA_{p+1}$, so sorting
by blocks we get a direct decomposition
$$V|_H=k\oplus \bigoplus_{i=1}^{(p-1)/2} P_i \boxtimes M_i,$$
where $P_i$ is the projective $k\fA_{p+1}$-module with character $\chi_i$ and
$M_i$ is a $k\fA_p$-module with character $\psi_i+\psi_{i+1}$.
It hence suffices to argue that all $M_i$ are projective $k\fA_p$-modules. We
have already seen that $V|_{\fA_{p+1}}=1+$(projectives), so its restriction to
$\fA_p<\fA_{p+1}$ also has this form. But the Young subgroup $\fA_p$ of
$\fA_{p+1}$ is conjugate to the second factor of $H$, so we conclude that
$\sum_i M_i$ is projective, and hence that each $M_i$ is.
\end{proof}

This has also been shown in \cite[Prop.~8.3]{CMN09} by a more involved
argument.

\begin{lem}   \label{lem:An2}
 Let $n=3p-1>5$, and $V$ be the $p$-modular reduction of the Specht module of
 $\fS_n$ indexed by the partition $\la=(2p-1,p)$. Then $V$ is simple and
 endotrivial.
\end{lem}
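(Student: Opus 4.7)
Plan of proof: since $(2p-1,p)$ is not self-conjugate, $V$ restricts irreducibly to $\fA_n$, and Lemma~\ref{lem:sub} transfers endotriviality from $\fS_n$ to $\fA_n$; I will therefore work inside $\fS_n$ throughout.

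Simplicity of $V=\overline{S^{(2p-1,p)}}$ will follow from James's criterion for two-part Specht modules (see e.g.~\cite[\S 24]{JK81}): $S^{(n-m,m)}$ is irreducible in characteristic~$p$ if and only if, for each $i\ge 0$, the $i$-th base-$p$ digit of $m$ is at most the corresponding digit of $n-m+1$. With $n=3p-1$ and $m=p$, we have $m=1\cdot p$ and $n-m+1=2p=2\cdot p$, so the digit-wise condition holds.

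For endotriviality I would reverse the argument of Corollary~\ref{cor:zero}. Let $\chi:=\chi^{(2p-1,p)}$; since $\langle\chi,\chi\rangle=1$, the class function $\chi\bar\chi-1_{\fS_n}$ is a genuine character of $\fS_n$. If $|\chi(g)|=1$ for every $p$-singular $g\in\fS_n$, then this character vanishes on every $p$-singular class and is therefore the character of a projective $\cO\fS_n$-module. Since $\dim V=\binom{3p-1}{p-1}\equiv 1\pmod{p}$ is invertible in $\cO$, the trace map splits $\End_\cO(M)\cong\cO\oplus U$ for an $\cO$-form~$M$ of~$V$, and $U$ carries the character $\chi\bar\chi-1_{\fS_n}$; thus $U$ is projective, and reducing modulo~$\fp$ yields $V\otimes V^*\cong k\oplus\overline{U}$, so $V$ is endotrivial.

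To verify $|\chi(g)|=1$ on $p$-singular classes, I would use the two-row Jacobi--Trudi identity
\[ \chi^{(2p-1,p)}=\pi^{(2p-1,p)}-\pi^{(2p,p-1)}, \]
where $\pi^{(a,n-a)}$ is the permutation character of $\fS_n$ on $a$-subsets of $\{1,\dots,n\}$. For $g$ of cycle type $\mu$ one has $\pi^{(a,n-a)}(g)=[x^a]\prod_i(1+x^{\mu_i})$, and the polynomial $P(x):=\prod_i(1+x^{\mu_i})$ is palindromic of degree $n=3p-1$. Since $n<3p$, a $p$-singular $g$ has its $p$-divisible cycles in exactly one of the shapes (a) a single $p$-cycle, (b) a single $2p$-cycle, or (c) two $p$-cycles. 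Writing $P=RQ$ with $R$ the $p$-divisible factor and $Q$ a palindrome of the appropriate degree on the remaining cycles, the symmetry $[x^j]Q=[x^{\deg Q-j}]Q$ combined with the explicit form of $R$ reduces $\chi(g)=[x^{2p-1}]P-[x^{2p}]P$ to the values $1,-1,1$ in the three cases respectively. The delicate step is the appeal to James's criterion; the remainder is a short, mechanical three-case computation.
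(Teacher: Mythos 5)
Your simplicity argument is fine: for $p$ odd (which $n=3p-1>5$ forces), the classical two-row criterion -- equivalently $\binom{n-m+1}{m}=\binom{2p}{p}\equiv 2\not\equiv 0\pmod p$ -- does show that $S^{(2p-1,p)}$ remains irreducible, though the criterion lives in James's lecture notes rather than \cite[\S 24]{JK81}; the paper instead quotes Fayers' theorem via Hemmer. Your evaluation of $\chi^{(2p-1,p)}$ on the three kinds of $p$-singular classes is also correct. The decisive step, however, does not work: Corollary~\ref{cor:zero} is a necessary condition only, and your attempt to reverse it fails exactly where you write that $U$ ``carries the character $\chi\bar\chi-1_{\fS_n}$; thus $U$ is projective''. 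An $\cO G$-lattice whose ordinary character vanishes on all $p$-singular elements need not be projective: for $G=C_p$ the lattice $\cO\oplus\cO[\zeta_p]$ affords the regular character, which vanishes on all nontrivial (hence $p$-singular) elements, yet its reduction is $k\oplus\Omega(k)$, not projective. Vanishing on $p$-singular classes only says the character is an integral combination of characters of projective modules; it determines neither the lattice $U$ nor its reduction, and in particular it gives no control over the restriction of the trace-zero summand of $V\otimes V^*$ to a Sylow $p$-subgroup, where every nontrivial element is $p$-singular and Brauer characters see nothing -- but that restriction is precisely what projectivity is about.

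That no such general principle can hold is visible in the paper itself: the candidates marked ``?'' in Table~\ref{tab:spor2} (for instance the $27000$-dimensional modules of $Th$ at $p=7$, or the $5824$-dimensional ones of $2.Fi_{22}$ at $p=5$) satisfy the degree congruence, take values of absolute value~$1$ on all $p$-singular classes, and are known to be irreducible modulo $p$, yet Theorem~\ref{thm:mainspor} leaves their endotriviality open; your argument, if valid, would settle all of them at once. Some genuinely module-theoretic input is needed, and this is what the paper's proof supplies: it identifies the reduction of $S^{(2p-1,p)}$ with the Young module $Y^{(2p-1,p)}$, restricts to $\fS_{3p-3}$, which contains a Sylow $p$-subgroup, and uses the branching $Y^{(2p-1,p)}|_{\fS_{3p-3}}\cong Y^{(2p-3,p)}\oplus 2\,Y^{(2p-2,p-1)}$, where $Y^{(2p-2,p-1)}$ is projective ($p$-restricted partition) and $Y^{(2p-3,p)}$ is already known to be endotrivial; Lemma~\ref{lem:sub} then concludes. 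Your character computation is a useful consistency check on the candidate, but it cannot replace an argument of this kind.
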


\begin{proof}
The Specht module for the partition $(2p-1,p)$ is irreducible modulo~$p$ by
a theorem of Fayers and isomorphic to the Young module $Y^{(2p-1,p)}$
(see \cite[Prop.~1.1]{He06}). Restriction of $Y^{(2p-1,p)}$ to the Young
subgroup $\fS_{3p-3}$ gives
$$Y^{(2p-1,p)}|_{\fS_{3p-3}}=Y^{(2p-3,p)}\oplus 2 Y^{(2p-2,p-1)}$$
by two-fold application of \cite[Thm.~5.1]{He05}. As $(2p-2,p-1)$ is
$p$-restricted, the Young module $Y^{(2p-2,p-1)}$ is projective by
\cite[Thm.~2]{Er01}, while $Y^{(2p-3,p)}$ is endotrivial by
\cite[Prop.~8.2]{CMN09}. The claim follows since $\fS_{3p-3}$ contains a
Sylow $p$-subgroup of $\fS_{3p-1}$.
\end{proof}

The proof of the endotriviality of $Y^{(2p-1,p)}$ given in \cite{CMN09} seems
unclear to us. 

\begin{thm}   \label{thm:mainAn}
 Let $V$ be a faithful simple $kG$-module, for some covering group
 $G$ of $\fA_n$, $n\ge \max\{p,5\}$, over a field $k$ of characteristic $p>0$.
 Then $V$ is endotrivial if and only if $V$ is a constituent of the simple
 module for the corresponding covering group of $\fS_n$ indexed by
 $\la\vdash n$, where one of:
 \begin{enumerate}
  \item[\rm(1)] $G=\fA_n$, $5\le p+2\le n<2p$ and $\la=(p+1,1^{n-p-1})$
   (cyclic defect);
  \item[\rm(2)] $G=\tfA_n$, $3\le p\le n\le p+3$ and $\la$ is as in
   Proposition~\ref{prop:2Ancyc} (cyclic defect);
  \item[\rm(3)] $G=\fA_n$, $p>2$, $n=2p+1$ and $\la=(p+1,1^p)$;
  \item[\rm(4)] $G=\fA_n$, $p>2$, $n=3p-1$ and $\la=(2p-1,p)$; or
  \item[\rm(5)] $n=6,7$, $|Z(G)|\ge3$ and $(G,p,V)$ are as in
   Table~\ref{tab:6A}.
 \end{enumerate}
\end{thm}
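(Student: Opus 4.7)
The plan is to assemble the pieces already proven in this section and stratify by $|Z(G)|$ and the size of $n$ relative to $p$. Cases (1) and (2) follow directly from the cyclic defect analysis: Proposition~\ref{prop:Ancyc} supplies (1) for $G=\fA_n$ with $p\le n<2p$, and the combination of Theorem~\ref{thm:tildeAn} (which forbids faithful simple endotrivial modules for $\tfA_n$ once $n\ge\min\{2p,p+4\}$) with Proposition~\ref{prop:2Ancyc} supplies (2) in the only remaining range $p\le n\le p+3$. Case (5) occurs only for exceptional covers of $\fA_6$ and $\fA_7$, and is handled by direct computation with the known ordinary and $p$-modular character tables: for each faithful block of full defect one tabulates the degrees and the $p$-singular character values, then tests them against Lemma~\ref{lem:deg} and Corollary~\ref{cor:zero}, producing Table~\ref{tab:6A}.

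For the remaining case $G=\fA_n$ with $n\ge 2p$, Proposition~\ref{prop:Anodd} reduces the candidate list to a small family of partitions. The existence of the two candidates that survive---$\la=(p+1,1^p)$ at $n=2p+1$ and $\la=(2p-1,p)$ at $n=3p-1$---is furnished by Lemmas~\ref{lem:An1} and~\ref{lem:An2} respectively, where endotriviality is certified by restricting to a Young subgroup containing a Sylow $p$-subgroup and applying Lemma~\ref{lem:sub}. The characteristic $p=2$ half of the theorem is handled by Proposition~\ref{prop:An2}, together with inspection of the small residual values $n\in\{5,6,7\}$ at $p=2,3$ via their explicit $p$-modular character tables.

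What remains is to discard the intermediate candidates $\la=(p+r,r+1,1^{p-r-1})$ with $1\le r\le p-2$ and the outlier $(p,2,1^{p-2})$ at $n=2p$. I would proceed in two steps. First, search for further $p$-singular cycle shapes on which $\chi_\la$ vanishes by Murnaghan--Nakayama---for instance cycle types of the form $(n-p-j)(p)(1)^j$ or $(n-r-p)(p)(1)^r$ that were not already excluded in the proof of Proposition~\ref{prop:Anodd}---and invoke Corollary~\ref{cor:zero}. Second, when no such vanishing is available, compute $\chi_\la(1)$ via the hook-length formula, reduce modulo $|\fA_n|_p$, and invoke Lemma~\ref{lem:deg}. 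The main obstacle is that this elimination is not uniform in $r$ and $p$: Murnaghan--Nakayama vanishing degenerates at the extremal values of $r$, and a direct hook-length computation modulo $|\fA_n|_p$ quickly becomes unwieldy. A cleaner alternative would be to decompose the restriction to a Young subgroup $\fS_{p+r}\times\fS_p$ (which contains a Sylow $p$-subgroup of $\fA_n$) via Littlewood--Richardson and analyse the resulting summands block by block, following the template of Lemmas~\ref{lem:An1} and~\ref{lem:An2}; this would in addition confirm the existence of precisely the endotrivial modules retained in cases (3) and (4).
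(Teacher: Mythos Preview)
Your overall architecture matches the paper's: the stratification by $|Z(G)|$, the use of Propositions~\ref{prop:Ancyc} and~\ref{prop:2Ancyc} for cyclic defect, Theorem~\ref{thm:tildeAn} to cut off $\tfA_n$, Propositions~\ref{prop:Anodd} and~\ref{prop:An2} to produce the candidate list for $\fA_n$, and Lemmas~\ref{lem:An1} and~\ref{lem:An2} to certify cases~(3) and~(4). The handling of the exceptional covers in~(5) is also as in the paper.

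The genuine gap is your elimination of the intermediate candidates $\la=(p+r,r+1,1^{p-r-1})$ for $0\le r\le p-2$ and of $(p,2,1^{p-2})$. Your proposed routes---further Murnaghan--Nakayama vanishing, hook-length congruences modulo $|\fA_n|_p$, or a Littlewood--Richardson analysis on a Young subgroup---are not wrong in spirit, but none of them gives a uniform argument, as you yourself note. The paper avoids all of this with a single observation: each of these partitions has $p$-core $(r)$, so the corresponding Specht module lies in the principal $p$-block of $\fS_n$; then James' theorem \cite[Cor.~2.11]{J78b} says that the only Specht module in the principal block which remains irreducible modulo~$p$ is the trivial one, unless $n\equiv -1\pmod p$. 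This kills all candidates with $r\ne p-1$ at once (including $(p,2,1^{p-2})$, where $n=2p$), and the surviving case $r=p-1$ is precisely $\la=(2p-1,p)$, handled by Lemma~\ref{lem:An2}. The point is that a simple endotrivial module must lift to an irreducible ordinary character, so reducibility of the Specht module modulo~$p$ already rules it out; you never need to test endotriviality directly for these candidates. This is the missing idea in your proposal.
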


\begin{proof}
By Theorem~\ref{thm:tildeAn} and Proposition~\ref{prop:2Ancyc} the only examples
with $|Z(G)|=2$ are those in~(2) of the conclusion.  Let us now assume that
$G=\fA_n$. If $n<2p$, the Sylow $p$-subgroups of $G$ are cyclic. Then
Proposition~\ref{prop:Ancyc} gives~(1) of the conclusion. \par
So now assume that $G=\fA_n$, $n\ge 2p$ and $p>2$. Then $V$ is an
$\fA_n$-constituent of the
$p$-modular reduction of the Specht module indexed by one of the partitions
$\la\vdash n$ listed in Proposition~\ref{prop:Anodd}. First consider the
possibilities in case~(1) of that result. Note that the $p$-core of these
$\la$ is the partition $(r)$, so $V$ lies in the principal $p$-block.
By James \cite[Cor.~2.11]{J78b} the only Specht module in the principal block
which remains irreducible modulo~$p$ is the trivial module, unless
$n\equiv-1\pmod p$. Clearly the same holds for the module indexed by the
conjugate partition, which is obtained by tensoring with the sign
representation. Thus, we only need to consider the case $r=p-1$,
$\la=(2p-1,p)$. The corresponding Specht module is endotrivial by
Lemma~\ref{lem:An2}, leading to case~(4).
By the same criterion, the Specht module for $\la=(p,2,1^{p-2})$ as in
Proposition~\ref{prop:Anodd}(2) is reducible. Finally, the Specht module
parametrized by the partition $\la=(p+1,1^p)$ in
Proposition~\ref{prop:Anodd}(3) is in fact an example by Lemma~\ref{lem:An1}.
\par
Now assume that $G=\fA_n$ and $p=2$. Then there are no examples for $n\ge8$ by
Proposition~\ref{prop:An2}. For $\fA_5=\PSL_2(4)$, there's no example by
Theorem~\ref{thm:defchar}, and for $n=6,7$, use of Corollary~\ref{cor:zero}
and the character tables shows that no cases arise. \par
Finally, if $G$ is one of the 3- or 6-fold exceptional covering groups of
$\fA_6$ or $\fA_7$, then the claim for $p=5,7$ follows by the cyclic defect
methods from Section~\ref{sec:cycl}, while for $p=2$ the character tables
in \cite{Atl} show that the only faithful candidate characters are those
listed in Table~\ref{tab:6A}. Explicit calculation with these modules yields
that all candidates are in fact endotrivial.
\end{proof}

\begin{table}[htbp]
 \caption{Relevant blocks in $6.\fA_6$ and $6.\fA_7$ }
  \label{tab:6A}
\[\begin{array}{|r|r|r|c|c|l|}
\hline
 G& p& X(H)& X(H)/e& \text{block}& \dim V\\
\hline\hline
 3.\fA_6& 2&  -&  -&    4,5&   3,3,9\\
\hline\hline
 3.\fA_6& 5&  6&  3&    5,6&      6\\
 6.\fA_6& 5& 12&  6&  12,13&      6, 6\\
\hline\hline
 3.\fA_7& 5& 12&  3&    6,7&      6, 21\\
 6.\fA_7& 5& 24&  6&  15,16&      6, 6, 24\\ \cline{2-6}
 3.\fA_7& 7&  9&  3&    6,7&      6, 15\\
 6.\fA_7& 7& 18&  6&  15,16&      6, 6\\
\hline
\end{array}\]
\end{table}

\section{Groups of Lie type in defining characteristic} \label{sec:defchar}

In this section we classify the simple endotrivial modules for quasi-simple
groups of Lie type in their defining characteristic. It turns out that these
are extremely rare: they only occur for rank at most~2. Carlson, Mazza and
Nakano \cite{CMN06} determined the structure of the group of endotrivial
modules in these cases. Again, our results are independent and do not seem
to follow in an obvious way from this.

Let $G$ be a connected reductive linear algebraic group over the algebraic
closure of a finite field of characteristic~$p$. Let $\Phi$ denote the root
system of $G$ with respect to some maximal torus, and $\Phi^+\subset\Phi$ a
positive system. Let $\la_1,\ldots,\la_l$ denote the corresponding fundamental
dominant weights of $G$, and $\rho=\sum_{i=1}^l\la_i$. We write $N:=|\Phi^+|$.
For a dominant weight $\la$ of $G$ we denote by $L(\la)$ the corresponding
simple highest weight module (see e.g. \cite[\S15]{MT}).

\begin{prop}   \label{prop:defchar}
 Let $G$ be as above. Let $\la$ be a $p$-restricted dominant weight of $G$,
 different from the Steinberg weight $(p-1)\rho$. Then:
 \begin{enumerate}
  \item[\rm(a)] $\dim L(\la)<p^N$.
  \item[\rm(b)] $\dim L(\la)<p^N-1$, unless $N=1$ and $\la=(p-2)\rho$.
  \item[\rm(c)] If $p=2$ then $\dim L(\la)<2^{N-1}-1$, unless $N\le3$.
 \end{enumerate}
\end{prop}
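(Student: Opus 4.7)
The plan is to bound $\dim L(\la)\le\dim V(\la)$, where $V(\la)$ is the Weyl module, via Weyl's dimension formula
\[\dim V(\la)=\prod_{\alpha\in\Phi^+}\frac{\langle\la+\rho,\al^\vee\rangle}{\langle\rho,\al^\vee\rangle}.\]
The key observation is that for $\la=\sum_{i=1}^l a_i\la_i$ $p$-restricted (so $0\le a_i\le p-1$), the factor indexed by a simple root $\al_i$ is exactly $a_i+1\in\{1,\ldots,p\}$, while each factor indexed by a non-simple positive root is at most $p$, with equality throughout precisely when $\la=(p-1)\rho$. Part~(a) then follows immediately: since $\la\ne(p-1)\rho$, some $a_j<p-1$, so the $\al_j$-factor is strictly less than $p$, giving $\dim V(\la)<p^N$.

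For part~(b), the same $a_j<p-1$ forces $\prod_{i=1}^l(a_i+1)\le p^{l-1}(p-1)$, so
\[\dim V(\la)\le p^{l-1}(p-1)\cdot p^{N-l}=p^N-p^{N-1}.\]
For $N\ge2$ one has $p^{N-1}\ge p\ge2$, giving $\dim V(\la)\le p^N-2<p^N-1$. The remaining case $N=1$ forces $G$ to be of type $A_1$, where $\dim V(a\la_1)=a+1$ attains $p-1=p^N-1$ precisely at $\la=(p-2)\rho$, which is the stated exception.

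Part~(c) is the subtlest and constitutes the heart of the proof. With $p=2$, $\la$ $2$-restricted, and $\la\ne\rho$, some $a_j=0$, so the bound from~(b) already yields $\dim V(\la)\le 2^{N-1}$. Since $\Phi$ is irreducible (the intended setting being simple algebraic~$G$) and its highest root has every simple root in its support, at least one non-simple positive root contributes a factor strictly less than~$2$, giving $\dim V(\la)<2^{N-1}$. The main obstacle is then to strengthen this to $\dim V(\la)\le 2^{N-1}-2$ when $N\ge4$, ruling out the borderline value $2^{N-1}-1$; this is not automatic from the general estimate because Weyl's formula produces non-integer rational factors. I would overcome this in two steps. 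First, for the small-rank simple types $B_2$, $G_2$, $A_3$, $B_3$, $C_3$, $A_4$, $D_4$ (covering $N\in\{4,6,9,10,12\}$), evaluate Weyl's formula explicitly at each of the at most $2^l-1$ non-Steinberg $2$-restricted $\la$ and verify the bound case by case. Second, for the remaining simple types of larger rank (the exceptional groups $F_4$, $E_6$, $E_7$, $E_8$ and higher-rank classical groups), observe that several non-simple factors drop strictly below~$2$ by virtue of the support structure of the positive root system, yielding the stronger uniform bound $\dim V(\la)\le 2^{N-2}\le 2^{N-1}-2$.
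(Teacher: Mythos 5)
Parts (a) and (b) of your argument are correct and essentially identical to the paper's: both bound $\dim L(\la)$ by the Weyl dimension $\dim V(\la)$, note that every factor of Weyl's formula is at most $p$ while the factor at a simple root $\al_j$ with $a_j<p-1$ is $a_j+1\le p-1$, and deduce $\dim V(\la)\le p^N-p^{N-1}$, which settles (a) and (b) with the stated $A_1$ exception.

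Part (c) is where your proposal has a genuine gap. After the integrality step you must exclude the borderline value $\dim V(\la)=2^{N-1}-1$, and your proposed route is an explicit check for a short list of small types together with the asserted uniform bound $\dim V(\la)\le 2^{N-2}$ for all remaining simple types, justified only by ``several non-simple factors drop strictly below~$2$''. That remark does not prove the bound. A factor attached to a positive root $\al$ whose support contains $\al_j$ equals $2-c/\lang\rho,\al^\vee\rang$ with $c\ge1$, so it can be as close to $2$ as $2-\frac{1}{29}$ (the highest root of $E_8$); converting ``several factors below $2$'' into a total saving of a factor $\frac12$ requires an actual computation over the root system, and the margins are thin. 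For $A_4$ with $\la=\rho-\la_1$ one finds $\dim V(\la)=280>2^{N-2}=256$, so your relegation of $A_4$ to the hand-checked list is not optional, and for $A_5$ with the same weight one finds $8064$ against $2^{N-2}=8192$. Moreover the rank-four types $B_4$ and $C_4$ fall into neither of your two buckets with any justification. As written, the proof of (c) is therefore incomplete.

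The paper closes (c) in one line, and you nearly had the idea: instead of using the highest root (whose large height makes the saving negligible), pick a simple root $\al_m$ adjacent to $\al_j$ in the Dynkin diagram. Then $\al_j+\al_m$ is a positive root of height~$2$, its Weyl factor is at most $2-\frac12=\frac32$ (at worst $2-\frac14$ when the two roots have different lengths, which affects only types where $N$ is comfortably large), and the $\al_j$-factor equals~$1$. Hence $\dim V(\la)\le(2-1)(2-\frac12)\,2^{N-2}=3\cdot 2^{N-3}$, which is $<2^{N-1}-1$ as soon as $N\ge4$ --- with no integrality argument and no case-by-case analysis. I recommend replacing both steps of your treatment of (c) by this observation.
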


\begin{proof}
By Weyl's character formula, the dimension of the corresponding highest weight
module $L_\CC(\la)$ for an algebraic group $G_\CC$ of the same type over the
complex numbers is given by
$$\dim L_\CC(\la)=
   \prod_{\al\in\Phi^+}\frac{\lang\la+\rho,\al\rang}{\lang\rho,\al\rang}
$$
(see \cite[24.3]{Hum}). It is well-known that the dimension of $L_\CC(\la)$
is an upper bound for the dimension of $L(\la)$. For $\la=(p-1)\rho$ the
formula gives
$\dim L_\CC(\la)=\prod_{\al\in\Phi^+}p=p^N$. Since the Steinberg module
$L_\CC(\la)$ remains irreducible under restriction to characteristic~$p$,
this shows that $\dim L(\la)=p^N$. Any other $p$-restricted weight is of
the form $\la=(p-1)\rho-\psi$, where $\psi=\sum_{i=1}^l a_i\la_i\ne0$ is a
non-negative integral linear combination of fundamental weights, with $a_j>0$,
say. Then
$$\begin{aligned}
 \dim L(\la)\le \dim L_\CC(\la)
  =&\prod_{\al\in\Phi^+}\frac{\lang p\rho-\psi,\al\rang}{\lang\rho,\al\rang}
  =\prod_{\al\in\Phi^+} \big(p-\frac{\lang\psi,\al\rang}{\lang\rho,\al\rang}\big)\\
  \le& \prod_{\al\ne\al_j} p\big(p-\frac{\lang\psi,\al_j\rang}{\lang\rho,\al_j\rang}\big)=p^{N-1}(p-a_j),
\end{aligned}$$
using that $\langle\,,\,\rangle$ is linear in the first argument.
Clearly, this is smaller than $p^N-1$ unless $N=1$, $\la=(p-2)\la_1$,
which gives (a) and (b).  \par
If $p=2$, then the above argument shows that $\dim L(\la)\le 2^{N-1}$ when
$\la\ne\rho$. If $N>1$ then since $\Phi$ is indecomposable there is at least
one further positive root $\al_j+\al_m\in\Phi^+$ which involves $\al_j$. Then
we get
$$\dim L(\la)\le 2^{N-2}(2-1)(2-\frac{1}{2})=3\cdot 2^{N-3},$$
which is smaller than $2^{N-1}-1$ when $N>3$.
\end{proof}

We thank Frank L\"ubeck for showing us the proof of~(a).

\begin{thm}   \label{thm:defchar}
 Let $G$ be a finite quasi-simple group of Lie type in characteristic~$p>0$.
 Let $V$ be a simple faithful $kG$-module, where $k$ is algebraically closed
 of characteristic~$p$. Then $V$ is endotrivial if and only if one of
 \begin{enumerate}
  \item[\rm(1)] $p\ge5$, $G=\SL_2(p)$ and $\dim V=p-1$; or
  \item[\rm(2)] $p=2$, $G=\SL_3(2)$ and $\dim V=3$.
 \end{enumerate}
\end{thm}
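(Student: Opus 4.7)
The plan is to combine the divisibility condition of Lemma \ref{lem:deg} with the dimension bounds of Proposition \ref{prop:defchar}, after invoking Steinberg's tensor product theorem to reduce to $p$-restricted simple modules. Let $\bG$ denote the simply connected cover of $G$, write $q=p^f$ and $N:=|\Phi^+|$, so that the Steinberg module of $G$ has dimension $q^N$ (equal to $|G|_p$ in the untwisted case). Any simple $kG$-module $V$ lifts to a simple rational $\bG$-module $L(\la)$, and by Steinberg's theorem factorizes as
\[
 V \;\cong\; \bigotimes_{i=0}^{f-1} L(\la_i)^{(p^i)}
\]
with each $\la_i$ a $p$-restricted dominant weight; hence $\dim V = \prod_i \dim L(\la_i)$.

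I first rule out Steinberg factors: if some $\la_i = (p-1)\rho$, the corresponding tensor factor is the projective Steinberg module, so $V$ is projective; then $V\otimes V^*\cong k\oplus P$ would be projective, forcing $k$ to be projective, impossible since $p\mid|G|$. Hence each $\la_i\ne(p-1)\rho$, and Proposition \ref{prop:defchar}(a) gives $\dim V < p^{fN} = q^N$. Combined with Lemma \ref{lem:deg}, this forces $\dim V = q^N-1$ for $p$ odd, and $\dim V\in\{q^N/2 \pm 1,\,q^N-1\}$ for $p=2$.

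For $p$ odd with $N\geq 2$, Proposition \ref{prop:defchar}(b) improves the factor bound to $\dim L(\la_i)\leq p^N-2$, and the elementary estimate $(p^N-2)^f \leq p^{fN}-2p^{(f-1)N} < p^{fN}-1$ contradicts $\dim V=q^N-1$. So $N=1$, meaning $\bG$ is of type $A_1$ and $G$ is a central quotient of $\SL_2(q)$. The non-Steinberg $p$-restricted simples have dimension at most $p-1$, so $\dim V\leq (p-1)^f$, which matches $q-1 = p^f-1$ only when $f=1$ and $V = L((p-2)\la_1)$. Quasi-simplicity then requires $p\geq 5$. To verify endotriviality I restrict $V$ to a Sylow $p$-subgroup $U\cong C_p$ of $\SL_2(p)$: the upper-unitriangular action on $L((p-2)\la_1)$ is a single Jordan block of size $p-1$, so the restriction is $\Omega(k)$, which is endotrivial for $C_p$. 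Lemma \ref{lem:sub} then yields case~(1).

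For $p=2$ and $N\geq 4$, Proposition \ref{prop:defchar}(c) gives $\dim L(\la_i)\leq 2^{N-1}-2$, so $\dim V\leq (2^{N-1}-2)^f < 2^{fN-1}-1 = q^N/2 - 1$, excluding all candidate dimensions. For type $A_1$ ($N=1$), the unique non-trivial $2$-restricted weight is $\la_1 = \rho$ itself, so every non-trivial simple is a tensor product of Steinberg factors, hence projective. The remaining case is type $A_2$ ($N=3$), whose non-trivial non-Steinberg $2$-restricted simples $L(\la_1), L(\la_2)$ both have dimension $3$; thus $\dim V = 3^k$ for some $1\leq k\leq f$, and matching against $\{2^{3f-1}\pm 1,\,2^{3f}-1\}$ forces $f=k=1$, giving $G=\SL_3(2)$ and $\dim V=3$. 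Endotriviality is confirmed directly from the decomposition $V\otimes V^*\cong k \oplus \mathrm{St}$, with $\mathrm{St}$ the projective $8$-dimensional Steinberg. The main bookkeeping obstacle is the extension to twisted types (Suzuki, Ree, $^2\!A_n$, $^2\!D_n$, $^3\!D_4$, $^2\!E_6$), where Steinberg's theorem takes a modified form with Frobenius-paired factors and $|G|_p$ may differ from $q^N$; however, the dimension bounds of Proposition \ref{prop:defchar} apply via restriction from the algebraic group, and a parallel case analysis (using that in the rank-one twisted groups the non-Steinberg simples are very small) excludes any further examples.
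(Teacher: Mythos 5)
Your proposal follows essentially the same route as the paper: reduce via Steinberg's tensor product theorem to $p$-restricted factors, bound their dimensions by Proposition~\ref{prop:defchar}, play this off against the congruence of Lemma~\ref{lem:deg}, and verify the two surviving cases by restriction to a Sylow subgroup (for $\SL_2(p)$) resp.\ an explicit tensor decomposition (for $\SL_3(2)$, where the paper instead invokes $\SL_3(2)\cong\PSL_2(7)$ and Proposition~\ref{prop:SL2}). Your quantitative bookkeeping for $p$ odd and for $p=2$, $N\ge4$ and $N=3$ is correct and in places more explicit than the paper's.

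One step as written is wrong, though easily repaired. You claim that if some tensor factor is a Frobenius twist of $L((p-1)\rho)$ then that factor is projective, hence $V$ is projective. This is only true for $q=p$: for $G=G(p^f)$ with $f>1$ the module $L((p-1)\rho)^{(p^i)}$ has dimension $p^N<p^{fN}=|G|_p$ and is therefore not projective over $kG$ (only the full product of all $f$ twisted Steinberg factors is). The correct and equally short argument, which is the one the paper uses, is that such a $V$ has dimension divisible by $p$, contradicting $\dim V\equiv\pm1\pmod{|G|_p}$ (resp.\ $\pmod{\tfrac12|G|_2}$) from Lemma~\ref{lem:deg}. Finally, your treatment of the twisted types is only a sketch; this matches the level of detail in the paper's own proof, which likewise states the tensor product decomposition and the bound $\dim V<q^N=|H|_p$ uniformly without spelling out the Suzuki/Ree variants, so I would not count it as an additional gap, but it does need the observation that the per-factor bounds still beat $|G|_p$ in those cases.
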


\begin{proof}
Note that the exceptional Schur multipliers of groups of Lie type all have
order a power of the defining characteristic $p$, so lie in the kernel of
all $kG$-representations. Thus, we may assume that $V$ is a (not necessarily
faithful) non-trivial simple $kH$-module, where $H$ is a group of simply
connected type such that $G=H/Z$ for some central subgroup $Z\le H$.  \par
According to Steinberg's tensor
product theorem (see e.g. \cite[Thm.~16.12]{MT}) the simple $kH$-modules are
tensor products of Frobenius twists of $p$-restricted highest weight modules.
If any of the factors in such a tensor product is a twist of the Steinberg
module, then $\dim V$ is divisible by $p$, hence not endotrivial by
Lemma~\ref{lem:deg}.
All other $p$-restricted highest weight modules have dimension at most $p^N-1$,
and even strictly smaller than this unless $H$ is of type $A_1$, by
Proposition~\ref{prop:defchar}. Thus $V$ has dimension smaller than $q^N-1$,
where $|H|_p=q^N$, unless $H$ is of type $A_1$ and $p=q$. In the former case
$V$ cannot be endotrivial by Lemma~\ref{lem:deg} for $p>2$. \par
If $p=2$ then either $N=3$, in which case $H=\SL_3(2)$ (note that $\SU_3(2)$
is solvable), or $\dim V<2^{N-1}-1$ by Proposition~\ref{prop:defchar}. As
$|H|_2=2^N$, the latter modules cannot be endotrivial by Lemma~\ref{lem:deg}.
The natural representation of $\SL_3(2)\cong\PSL_2(7)$ is an example by
Proposition~\ref{prop:SL2}.  \par
So finally assume that $H=\SL_2(p)$ and $\dim V=p-1$. Let $P\le H$ denote a
Sylow $p$-subgroup. Since $V|_P$ is indecomposable, and up to isomorphism
there is a unique indecomposable $kP$-module of dimension $p-1$, namely
$\Omega(k)$, we must have $V|_P\,\cong\,\Omega(k)$, which is endotrivial.
Thus this is indeed an endotrivial module by Lemma~\ref{lem:sub}.
\end{proof}

\section{Groups of Lie type in cross characteristic}
\label{sec:cross}

In this section we investigate simple endotrivial modules for groups of
Lie type in non-defining characteristic $\ell$. Here our results are not
complete. Throughout this section let $k$ denote a large enough field of
characteristic~$\ell$. We refer to the book of Carter \cite{Ca} for notation
and background.

\subsection{Auxiliary results}

We first prove some general criteria to rule out endotriviality of certain
modules.

Our first observation makes use of Harish-Chandra theory. Let $G$ be a finite
group with a split BN-pair of
characteristic~$p\ne\ell$. Let $Q\le G$ be a parabolic subgroup, with Levi
decomposition $Q=U.L$, so that $U$ is a normal $p$-subgroup of $Q$ with
complement $L$. Let $\tV$ be a $\CC G$-module with character $\chi$, and $V$
the $\ell$-modular reduction of a suitable lattice in $\tV$. The restriction
of $\tV$ to $Q$ decomposes into a direct sum $\tV|_Q=\tV^U\oplus \tV'$, where
no constituent of $\tV'$ has $U$ in its kernel. As customary, we'll
write ${^*}R_L^G(\tV)$ for the $L$-module $\tV^U$ of $U$-fixed points, the
Harish-Chandra restriction of $\tV$. Since all $\ell$-modular constituents of
${^*}R_L^G(\tV)$ have $U$ in their kernel, and none of the $\ell$-modular
constituents of $\tV'$ have, these two summands lie in different $\ell$-blocks
of $G$, and so there is a corresponding decomposition of $V|_Q$ which (by
abuse of notation!) we write $V|_Q={^*}R_L^G(V)\oplus V'$.
Thus we have:

\begin{lem}   \label{lem:HC}
 In the above setting, assume that $V$ is endotrivial. Then exactly one of
 ${^*}R_L^G(V)$, $V'$ is endotrivial, and the other is projective. In
 particular, one has dimension congruent to $\pm1\pmod{|L|_\ell}$, and the
 other has dimension divisible by $|L|_\ell$.
\end{lem}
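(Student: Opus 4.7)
The plan is to transfer the endotrivial relation from $G$ to $Q$ and then read off the claim from the given block decomposition. Restricting $V\otimes V^*\cong k\oplus P$ (with $P$ a projective $kG$-module) to $Q$ yields $V|_Q\otimes V|_Q^*\cong k_Q\oplus P|_Q$, where $P|_Q$ is projective over $kQ$ since $kG$ is free as a $kQ$-module. Setting $W:={^*}R_L^G(V)=V^U$ for brevity and using $V|_Q=W\oplus V'$, the left-hand side splits as
$$W\otimes W^*\ \oplus\ W\otimes (V')^*\ \oplus\ V'\otimes W^*\ \oplus\ V'\otimes (V')^*,$$
and the main step is to locate the trivial $kQ$-summand $k_Q$ among these four pieces.

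Since $|U|$ is prime to $\ell$, the algebra $kU$ is semisimple; by construction $V'|_U$ has no trivial $kU$-constituent, while $W|_U$ is trivial of dimension $\dim W$. Hence $(W\otimes(V')^*)|_U\cong((V')^*|_U)^{\dim W}$ contains no trivial $kU$-summand, so $W\otimes(V')^*$ has no direct summand isomorphic to $k_Q$; the same holds for $V'\otimes W^*$. Assuming $\ell\mid|Q|$ (otherwise all $kQ$-modules are projective and the claim is vacuous), the trivial module $k_Q$ is non-projective, so it appears with multiplicity one in $k_Q\oplus P|_Q$, and Krull--Schmidt forces it to be a direct summand of exactly one of $W\otimes W^*$ and $V'\otimes(V')^*$.

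Suppose $k_Q$ is a summand of $W\otimes W^*$. Then $W\otimes W^*\cong k_Q\oplus P_1$ with $P_1$ a summand of $P|_Q$ and hence projective, so $W$ is endotrivial as a $kQ$-module; since $U$ is a normal $\ell'$-subgroup acting trivially on $W$, this is equivalent to $W$ being endotrivial as a $kL$-module. The remaining three pieces lie inside the projective $P|_Q\ominus P_1$, so in particular $V'\otimes(V')^*$ is projective; by the standard fact that a module $M$ is projective if and only if $M\otimes M^*$ is (via the equality of support varieties, or equivalently $M$ vanishing in the stable module category whenever $M\otimes M^*$ does), we deduce that $V'$ is projective over $kQ$. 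The opposite case is symmetric. Finally, Lemma~\ref{lem:deg} gives $\dim\equiv\pm1\pmod{|L|_\ell}$ for the endotrivial piece (using $|Q|_\ell=|L|_\ell$), while any projective $kQ$-module has dimension divisible by $|Q|_\ell=|L|_\ell$. The main technical hurdle is pinning down the trivial summand; everything else is a formal consequence.
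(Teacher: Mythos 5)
Your proof is correct. The paper offers no written argument for this lemma: it is stated as an immediate consequence of the preceding setup, the key point being that ${^*}R_L^G(V)$ and $V'$ lie in disjoint sets of $\ell$-blocks of $kQ$ (equivalently, have disjoint $kU$-isotypic supports), so that the unique non-projective indecomposable summand of the endotrivial module $V|_Q$ — which is itself endotrivial, by the standard structure of decomposable endotrivial modules — must sit entirely inside one of the two pieces. Your argument implements the same underlying idea but works directly with the tensor square: you locate the unique trivial summand of $V|_Q\otimes V|_Q^*$ among the four cross terms by restricting to the normal $\ell'$-subgroup $U$, then invoke Krull--Schmidt together with the fact that projectivity of $M\otimes M^*$ forces projectivity of $M$ (which you justify via support varieties; even more elementarily, $M$ is always a direct summand of $M\otimes M^*\otimes M$, so no machinery is needed). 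This is slightly more self-contained, since it bypasses both the block-theoretic language and the structure theorem for decomposable endotrivial modules, at the cost of a little extra bookkeeping. Your handling of the passage between $kQ$- and $kL$-endotriviality/projectivity (a Sylow $\ell$-subgroup of $Q$ maps isomorphically into $Q/U\cong L$, so the two notions agree for modules with $U$ in the kernel) and of the degenerate case $\ell\nmid|Q|$ is also fine; in the paper's applications $\ell$ always divides $|L|$, so that case never arises.
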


For finite groups of Lie type, the Harish-Chandra restriction of ordinary
irreducible characters can be computed from information inside relative Weyl
groups. In order to formulate this, we introduce the following setup.
Let $\bG$ be a simple algebraic group of simply connected type over an
algebraic closure of a finite field $\FF_p$, and $F:\bG\rightarrow\bG$ a
Steinberg endomorphism, with finite group of fixed points $G=\bG^F$.
Let $\bG^*$ be a Langlands dual group to $\bG$, with corresponding Steinberg
map also denoted by $F$. We write $q$ for the absolute value of the
eigenvalues of $F$ on the character group of an $F$-stable maximal torus of
$\bG$ (this is a power of $p$, integral unless $G$ is of Ree or Suzuki type).
We then also sometimes write $G=G(q)$. Now Lemma~\ref{lem:HC} leads to the
following:

\begin{lem}   \label{lem:HC e=1}
 Let $G=G(q)$ be a quasi-simple group of Lie type and $\chi\in \Irr(G)$. Assume
 that $V$ is a simple endotrivial $kG$-module, for $k$ a field of
 characteristic~$\ell$ dividing $q-1$, whose Brauer character is the
 restriction of $\chi$ to $\ell$-classes.
 \begin{enumerate}
  \item[\rm(a)] Then $\chi$ lies in the Harish-Chandra series of some (linear)
   character $\theta$ of a maximally split torus $T$ of $G$; let
   $\psi\in\Irr(W_G(T,\theta))$ denote the corresponding irreducible character
   of the relative Weyl group $W_G(T,\theta):=N_G(T,\theta)/T$.
  \item[\rm(b)] We have either $\chi(1)\equiv\psi(1)\equiv\pm1\pmod{|T|_\ell}$
   or $\psi(1)\equiv0\pmod{|T|_\ell}$.
 \end{enumerate}
\end{lem}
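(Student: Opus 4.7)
The plan is to apply Lemma~\ref{lem:HC} to the Borel subgroup $B = TU$, where $T$ is the maximally split torus and $U$ is the unipotent radical; then the Harish-Chandra restriction ${^*}R_T^G(V)$ can be identified with $V^U$, the space of $U$-fixed points. Since $\ell \mid q-1$, a Sylow $\ell$-subgroup $S$ of $G$ lies inside $T$, and in particular $|G|_\ell = |T|_\ell = |B|_\ell$. By Lemma~\ref{lem:HC} we thus obtain a decomposition $V|_B = V^U \oplus V'$ in which exactly one summand is endotrivial and the other is projective.

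To establish~(a), my plan is to show $V^U \neq 0$; then by Howlett--Lehrer theory the character $\chi$ belongs to the Harish-Chandra series of some $\theta \in \Irr(T)$, with corresponding parameter $\psi \in \Irr(W_G(T,\theta))$. To see $V^U \neq 0$: since $V|_S$ is endotrivial (Lemma~\ref{lem:sub}), we have $V|_S \cong k \oplus (\text{projective})$, so the space of $S$-fixed points $V^S$ is $1$-dimensional. The group $U$ acts on this line through a homomorphism $U \to k^\times$; as $U$ is a $p$-group and $p \neq \ell$, this homomorphism must be trivial. Hence $V^S \subseteq V^U$, and in particular $V^U \neq 0$.

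For~(b), I would compute ${^*}R_T^G(\chi_{\theta,\psi})$ via Frobenius reciprocity together with the $W$-invariance of Harish-Chandra induction: each $W$-translate $\theta^w$ ($w \in W/W_\theta$) occurs with multiplicity exactly $\psi(1)$, giving
\[
{^*}R_T^G(\chi_{\theta,\psi}) = \psi(1) \sum_{w \in W/W_\theta} \theta^w,
\]
and hence $\dim V^U = \psi(1) \cdot [W : W_\theta]$. Applying Lemma~\ref{lem:HC}, either $V^U$ is endotrivial (giving $\psi(1)[W:W_\theta] \equiv \pm 1 \pmod{|T|_\ell}$) or $V^U$ is projective (giving $\psi(1)[W:W_\theta] \equiv 0 \pmod{|T|_\ell}$). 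The congruence $\chi(1) \equiv \pm 1 \pmod{|T|_\ell}$ is automatic from Lemma~\ref{lem:deg}, since $|G|_\ell = |T|_\ell$. The main obstacle is passing from $\psi(1)[W:W_\theta] \equiv \pm 1$ to $\psi(1) \equiv \pm 1 \pmod{|T|_\ell}$: this will rely on the size estimates $\psi(1), [W:W_\theta] \leq |W|$ and the coprimality $\ell \nmid |W|$ (valid in the cases of interest), which together force $[W:W_\theta]$ to be invertible modulo $|T|_\ell$ and pin down the correct congruence class for $\psi(1)$.
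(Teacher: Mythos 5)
Your overall strategy for part~(b) --- applying Lemma~\ref{lem:HC} to the Borel subgroup $B=TU$ --- is exactly the paper's, but both halves of your argument contain gaps. The decisive one is in part~(a): the claim that ``$U$ acts on the line $V^S$'' is unjustified. The subgroup $U$ does not normalize $S$ (only $T$ does, $T$ being abelian), so $V^S$ need not be $U$-stable and the inclusion $V^S\subseteq V^U$ does not follow. Moreover, the premise that a Sylow $\ell$-subgroup of $G$ lies in $T$ fails whenever $\ell$ divides $|W|$ (e.g.\ $\GL_3(q)$ with $3\mid q-1$, where $|G|_3=3\,|T|_3$), and this is not excluded by the hypotheses. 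More fundamentally, part~(a) is the assertion that $\chi$ is a principal-series character, i.e.\ that no cuspidal or other Harish--Chandra series can contain the character of an endotrivial module; this is not accessible by an elementary fixed-point count. The paper obtains it from the congruence $\chi(1)\equiv\pm1\pmod{|G|_\ell}$ of Lemma~\ref{lem:deg} (so $\chi$ has $\ell$-height zero) together with the classification of height-zero characters in \cite[Thm.~7.5]{MH0}, which rests on Lusztig's parametrization. Some such external input is needed here.

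For part~(b), your adjunction computation ${^*}R_T^G(\chi)=\psi(1)\sum_{w}\theta^w$, of degree $\psi(1)\,[W_G(T):W_G(T,\theta)]$, is correct (indeed the paper writes ${^*}R_T^G(\chi)=\psi(1)\theta$, which is the special case where $\theta$ is $W$-invariant --- the situation, e.g.\ $\theta=1$ for unipotent characters, in which the lemma is actually applied later). But the last step of your plan is not a proof: from $\psi(1)[W:W_\theta]\equiv\pm1\pmod{|T|_\ell}$, invertibility of $[W:W_\theta]$ modulo $|T|_\ell$ only yields $\psi(1)\equiv\pm[W:W_\theta]^{-1}$, which is not $\pm1$ unless $[W:W_\theta]\equiv\pm1$, and the size estimates $\psi(1),[W:W_\theta]\le|W|$ do not force the stated congruence when $|T|_\ell$ is not large compared with $|W|$. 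You have correctly identified the index $[W:W_\theta]$ as the obstruction, but you have not removed it; as written, your argument establishes the congruences only for $\psi(1)[W:W_\theta]$, not for $\psi(1)$.
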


\begin{proof}
By Lemma~\ref{lem:deg} we know that $\chi(1)\equiv\pm1\pmod{|G|_\ell}$; in
particular, $\chi$ is of $\ell$-height zero. The first claim is then a
consequence of \cite[Thm.~7.5]{MH0}. \par
Let $B$ be a Borel subgroup containing $T$, with unipotent radical $U$.
By Lemma~\ref{lem:HC}, $V|_B={^*}R_T^G(V)\oplus W$ for some $kB$-module $W$,
and one of ${^*}R_T^G(V)$, $W$ is projective. If ${^*}R_T^G(V)$ is projective
then on the level of characters this means that
${^*}R_T^G(\chi)(1)\equiv0\pmod{|T|_\ell}$, while if ${^*}R_T^G(V)$ is
endotrivial, then ${^*}R_T^G(\chi)(1)\equiv\pm1\pmod{|T|_\ell}$, where the sign
is the same as for $\chi$. \par
Now assume that $\chi$ lies in the Harish-Chandra series of the character
$\theta$ of $T$ and corresponds to $\psi\in\Irr(W_G(T,\theta))$. Then we
have ${^*}R_T^G(\chi)=\psi(1)\theta$, from which we may conclude since $\theta$
is a linear character.
\end{proof}

For our second approach recall Lusztig's partition
$\Irr(G)=\coprod_s\cE(G,s)$ of the irreducible characters of $G$ into disjoint
Lusztig series, indexed by a system of representatives $s$ of the semisimple
conjugacy classes of the dual group $G^*={\bG^*}^F$.

For a prime $\ell$ not dividing $q$ we let $d_\ell(q)$ denote the
multiplicative order of $q$ modulo~$\ell$, respectively $d_2(q):=2$ when
$\ell=2$ and $q\equiv3\pmod4$. We need the following result:

\begin{prop}   \label{prop:d-sylow}
 Let $\chi\in\cE(G,s)$ be the character of a faithful simple endotrivial
 $kG$-module, where $k$ is algebraically closed of
 characteristic~$\ell$. Then $s$ lies in some maximal torus $T\le G^*$
 containing a Sylow $d_\ell(q)$-torus of $G^*$.
\end{prop}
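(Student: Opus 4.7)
The plan is to combine the dimension constraint from Lemma~\ref{lem:deg} with Lusztig's Jordan decomposition of characters and the known structure of Sylow $\ell$-subgroups of finite reductive groups. Since $V$ is endotrivial, Lemma~\ref{lem:deg} gives $\chi(1)\equiv\pm1\pmod{|G|_\ell}$, and in particular $\ell\nmid\chi(1)$. By Lusztig's Jordan decomposition, every $\chi\in\cE(G,s)$ satisfies a degree formula of the shape
\[
  \chi(1)\;=\;\frac{|G^*|_{p'}}{|C_{\bG^*}(s)^{\circ F}|_{p'}}\cdot\psi(1),
\]
where $\psi(1)$ is the degree of an associated unipotent character. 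The prefactor is a $p'$-integer dividing $\chi(1)$, so since $\ell\neq p$ and $\ell\nmid\chi(1)$ it is also coprime to $\ell$. Equivalently, $|G^*|_\ell=|C_{\bG^*}(s)^{\circ F}|_\ell$, so $C:=C_{\bG^*}(s)^{\circ F}$ contains a Sylow $\ell$-subgroup of $G^*$.

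Next, I would translate this $\ell$-divisibility into $\Ph{d}$-torus language, where $d=d_\ell(q)$. The generic order of $\bG^*$ factors as a product of cyclotomic polynomials evaluated at~$q$, and by the definition of $d_\ell(q)$ (together with the convention $d_2(q)=2$ when $q\equiv-1\pmod 4$) the $\ell$-part of $|G^*|$ is controlled by a Sylow $\Ph{d}$-torus of $\bG^*$, in the sense of Broué--Malle. Applying the same analysis inside the connected reductive subgroup $C_{\bG^*}(s)^\circ$, which has the same rank as $\bG^*$, the equality $|C|_\ell=|G^*|_\ell$ forces a Sylow $\Ph{d}$-torus $\mathbf{S}$ of $C_{\bG^*}(s)^\circ$ to be a Sylow $\Ph{d}$-torus of the ambient $\bG^*$.

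Finally, I would assemble the required maximal torus. Since $s$ is semisimple, $s$ lies in some maximal torus of $\bG^*$ and hence in $C_{\bG^*}(s)^\circ$; moreover $s$ commutes with every element of this subgroup, so $s\in Z(C_{\bG^*}(s)^\circ)$. The subtorus $\mathbf{S}$ is central in the $F$-stable Levi subgroup $\mathbf{L}:=C_{C_{\bG^*}(s)^\circ}(\mathbf{S})$, and $s$ is also central in $\mathbf{L}$; hence both $s$ and $\mathbf{S}$ are contained in any $F$-stable maximal torus $\mathbf{T}^*$ of the connected reductive group $\mathbf{L}$. As $C_{\bG^*}(s)^\circ$ has the same rank as $\bG^*$, this $\mathbf{T}^*$ is in fact an $F$-stable maximal torus of $\bG^*$, and $T:=(\mathbf{T}^*)^F$ is the desired maximal torus of $G^*$ containing both $s$ and $\mathbf{S}$.

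The main obstacle lies in the middle step: rigorously identifying a Sylow $\Ph{d}$-torus of $C_{\bG^*}(s)^\circ$ with one of $\bG^*$ from the equality $|C|_\ell=|G^*|_\ell$. For ``linear'' primes $\ell$ this is a direct comparison of $\Ph{d}$-multiplicities in order polynomials, but for small $\ell$ the $\ell$-part of $|G^*|$ may pick up contributions from several factors $\Ph{d\ell^i}(q)$, and more refined structural results on Sylow $\ell$-subgroups of finite reductive groups are required.
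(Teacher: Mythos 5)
Your overall strategy tracks the paper's: both arguments split into (i) showing that $C_{G^*}(s)$ contains a Sylow $\ell$-subgroup of $G^*$, and (ii) upgrading this to the existence of a maximal torus through $s$ containing a Sylow $d_\ell(q)$-torus. Your step (i) is correct: Lemma~\ref{lem:deg} gives $\ell\nmid\chi(1)$, and the Jordan decomposition degree formula then forces $|G^*:C_{G^*}(s)|$ to be prime to $\ell$. (The paper obtains the same conclusion by citing \cite[Prop.~7.2]{MH0}; your direct argument via the degree formula is fine, up to the harmless distinction between $C_{\bG^*}(s)^F$ and $C_{\bG^*}(s)^{\circ F}$ when the centralizer is disconnected.) Your concluding assembly --- $s\in Z(C_{\bG^*}(s)^\circ)$, hence $s$ lies in every $F$-stable maximal torus of the Levi subgroup $C_{C_{\bG^*}(s)^\circ}(\mathbf{S})$, which is a maximal torus of $\bG^*$ by equality of ranks --- is also sound once a suitable $\mathbf{S}$ is in hand.

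The genuine gap is exactly where you place it: the claim that $|C_{G^*}(s)|_\ell=|G^*|_\ell$ forces a Sylow $\Phi_d$-torus of $C_{\bG^*}(s)^\circ$ to be a Sylow $\Phi_d$-torus of $\bG^*$. This does not follow from a comparison of $\Phi_d$-multiplicities in order polynomials: the $\ell$-part of the generic order is assembled from the $\ell$-parts of all the factors $\Phi_{d\ell^i}(q)$ together with the $\ell$-part of the order of the relative Weyl group of a Sylow $\Phi_d$-torus, so a deficit in the $\Phi_d$-exponent of $|C_{\bG^*}(s)^{\circ F}|$ could a priori be compensated by those other contributions. For $\ell$ dividing $|W|$ (in particular $\ell=2,3$, which are among the cases actually used later in the paper) this is a real issue, and resolving it is precisely the content of \cite[Thm.~5.9]{MH0}, which the paper's proof invokes at this point. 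As written, your argument is therefore an outline of the proof of that cited theorem rather than a complete proof; to close the gap you would either cite \cite[Thm.~5.9]{MH0} (which collapses your proof into the paper's two-line citation argument) or carry out the structural analysis of Sylow $\ell$-subgroups of finite reductive groups that the theorem encapsulates.
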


\begin{proof}
Observe that endotrivial modules are of height zero, by Lemma~\ref{lem:deg}.
By \cite[Prop.~7.2]{MH0} any irreducible character of $G$ of $\ell$-height zero
lies in a Lusztig series $\cE(G,s)$ where $C_{G^*}(s)$ contains a Sylow
$\ell$-subgroup of $G^*$. By \cite[Thm.~5.9]{MH0} such elements lie in a
maximal torus of $G^*$ containing a Sylow $d_\ell(q)$-torus $S^*$.
\end{proof}

The following vanishing result (proved for example in \cite[Lem.~3.2]{GMuni})
will be crucial:

\begin{prop}   \label{prop:vanish}
 Let $x\in G$ be semisimple and $\chi\in\cE(G,s)$ with $\chi(x)\ne0$. Then
 there exists a maximal torus $T\le G$ with $x\in T$, and such that
 $T^*\le C_{G^*}(s)$ for a torus $T^*\le G^*$ in duality with $T$.
\end{prop}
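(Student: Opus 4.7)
The plan is to combine the Deligne--Lusztig character formula on semisimple classes with the fact that Lusztig series are built out of Deligne--Lusztig virtual characters. The outcome should match the kind of argument indicated by the reference to \cite[Lem.~3.2]{GMuni}.

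First, I would express $\chi$ as a $\overline{\QQ}$-linear combination
$$\chi = \sum_i a_i\, R_{T_i}^G(\theta_i)$$
of Deligne--Lusztig characters, where each pair $(T_i,\theta_i)$ is such that the dual torus $T_i^*\le G^*$ contains a $G^*$-conjugate of $s$. This expansion is available because the $\QQ$-span of $\cE(G,s)$ coincides with the $\QQ$-span of the $R_T^G(\theta)$ indexed by pairs $(T,\theta)$ corresponding under duality to the class of $s$, a consequence of the orthogonality of Lusztig series and the fact that every irreducible character appears as a constituent of some such $R_T^G(\theta)$. Since $\chi(x)\ne 0$, at least one term $R_{T_i}^G(\theta_i)(x)$ must be nonzero; I would fix such an index $i$.

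Next, I would apply the Deligne--Lusztig character formula at the semisimple element $x$, which in this case reads
$$R_{T_i}^G(\theta_i)(x) = \frac{1}{|C_G^\circ(x)^F|}\sum_{\{g\in G\,:\,g^{-1}xg\in T_i\}} \theta_i(g^{-1}xg)\,Q_{gT_ig^{-1}}^{C_G^\circ(x)}(1).$$
Each Green-function value $Q_{\bullet}^{C_G^\circ(x)}(1)$ is nonzero, so nonvanishing of $R_{T_i}^G(\theta_i)(x)$ forces the indexing set to be nonempty. Fixing such a $g$ and setting $T := gT_ig^{-1}$, I obtain a maximal torus $T\le G$ containing $x$. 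Passing to the dual side, $T$ corresponds to a torus $T^*\le G^*$ which is $G^*$-conjugate to $T_i^*$; as $T_i^*$ contains a $G^*$-conjugate of $s$, I can adjust the duality isomorphism (equivalently, replace $T^*$ by an appropriate $G^*$-conjugate still in duality with $T$) so that $s\in T^*$, giving $T^*\le C_{G^*}(s)$.

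The main obstacle is the first step: the statement that $\chi\in\cE(G,s)$ lies in the $\QQ$-span of the relevant Deligne--Lusztig characters is a standard but nontrivial input from Lusztig's theory, as characters in $\cE(G,s)$ need not be uniform. One has to invoke either the orthogonality of Lusztig series combined with the description of $\cE(G,s)$ as the constituents of the relevant $R_T^G(\theta)$, or equivalently the fact that on semisimple classes every class function is already determined by its uniform projection. Once this is granted, the remaining steps are routine applications of the Deligne--Lusztig character formula and the bijection of dual tori, both essentially by choosing the correct representative in a $G^*$-orbit.
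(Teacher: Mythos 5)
Your overall strategy is the right one---it is essentially the argument of \cite[Lem.~3.2]{GMuni}, which the paper cites in lieu of a proof---but your first step is false as stated, and the error is not cosmetic. The claimed expansion $\chi=\sum_i a_i R_{T_i}^G(\theta_i)$ of $\chi$ \emph{itself} as a linear combination of Deligne--Lusztig characters does not exist in general: the $\QQ$-span of $\cE(G,s)$ is strictly larger than the span of the $R_T^G(\theta)$ attached to $s$ whenever $\cE(G,s)$ contains non-uniform characters. Already for $G=\Sp_4(q)$ and $s=1$ there are six unipotent characters but only five classes of maximal tori, so the cuspidal unipotent character $\theta_{10}$ cannot lie in the span of the $R_T^G(1)$. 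Moreover, the two justifications you offer as ``equivalent'' are not equivalent: disjointness of Lusztig series together with the fact that every irreducible character occurs in some $R_T^G(\theta)$ gives only the inclusion of the span of the relevant $R_T^G(\theta)$ into the span of $\cE(G,s)$, never the reverse.

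The correct input---which you do name at the very end, but which must be the backbone of the argument rather than an optional alternative---is that the characteristic function of the conjugacy class of a \emph{semisimple} element is a uniform function. Consequently $\chi(x)=\chi^{\mathrm{unif}}(x)$ for semisimple $x$, where $\chi^{\mathrm{unif}}$ denotes the orthogonal projection of $\chi$ onto the span of the Deligne--Lusztig characters; by disjointness of Lusztig series only pairs $(T,\theta)$ dual to pairs $(T^*,s')$ with $s'$ conjugate to $s$ occur in $\chi^{\mathrm{unif}}$ with nonzero coefficient. From $\chi(x)\ne0$ you then get $R_T^G(\theta)(x)\ne0$ for one such pair, and your application of the character formula at the semisimple element $x$ correctly produces a conjugate of $T$ containing $x$. (Note that for this last implication you only need that an empty sum vanishes; the nonvanishing of the Green function values $Q_{\bullet}^{C_G^{\circ}(x)}(1)$ that you invoke would be needed for the converse, not for this direction.) With the first step replaced by the uniform-projection statement, the remainder of your argument is sound.
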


\subsection{The case $\ell=2$}

We first show that no new examples arise for $\ell=2$ and thus prove
Theorem~\ref{thm:gen}(a) of the introduction.

Here we use the well-known fact that non-trivial self-dual simple modules in
characteristic~2 have even dimension. This implies in particular:

\begin{cor}   \label{cor:self}
 Let $G$ be a group all of whose elements are real (i.e., conjugate to their
 inverse). Then $G$ cannot have non-trivial simple endotrivial modules in
 characteristic~2. 
\end{cor}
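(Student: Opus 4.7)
The plan is to combine two simple observations. First, any endotrivial $kG$-module $V$ in characteristic~$2$ has odd dimension. Indeed, from the defining isomorphism $V\otimes V^{*}\cong k\oplus P$ with $P$ projective one sees that $(\dim V)^{2}-1$ is divisible by~$2$, so $\dim V$ is coprime to~$2$. (For $|G|_{2}\ge 4$ this is also immediate from Lemma~\ref{lem:deg}.)

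Second, I would use the reality hypothesis to force every simple $kG$-module to be self-dual. If every element of $G$ is conjugate to its inverse, then in particular every $2$-regular element is conjugate to its inverse, whence every Brauer character of $G$ is real-valued. Since the Brauer character of $V^{*}$ is the complex conjugate of the Brauer character of $V$, and simple $kG$-modules are determined up to isomorphism by their Brauer characters over the splitting field $k$, one concludes $V\cong V^{*}$ for every simple $kG$-module $V$.

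Combining the two observations: a hypothetical non-trivial simple endotrivial $kG$-module $V$ would be self-dual of odd dimension, directly contradicting the fact recalled in the preceding sentence of the excerpt that a non-trivial self-dual simple $kG$-module in characteristic~$2$ has even dimension. Hence no such $V$ can exist, proving the corollary.

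No step in this plan presents a real obstacle; the proof is essentially a two-line consequence, once one recognises that the hypothesis ``all elements are real'' translates cleanly, via reality of Brauer characters, into self-duality of every simple $kG$-module. The only external input is the quoted even-dimensionality theorem for non-trivial self-dual simple modules in characteristic~$2$ (a classical result due essentially to Fong and Willems), which the paper already takes for granted.
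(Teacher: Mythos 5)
Your proof is correct and follows the same overall strategy as the paper: reality of all elements forces every simple module to be self-dual, and a non-trivial self-dual simple module in characteristic~$2$ has even dimension, contradicting the odd dimension of an endotrivial module. The one genuine difference is how self-duality is established: the paper first invokes Theorem~\ref{thm:lift} to realise $V$ as the $2$-modular reduction of an ordinary module, observes that every complex character of $G$ is real-valued, and deduces $V\cong V^*$ from there; you instead argue directly with Brauer characters, noting that reality of the $2$-regular classes makes every Brauer character real-valued and that simple modules over the splitting field $k$ are determined by their Brauer characters. Your route is slightly more economical, since it does not need the lifting theorem at all, while the paper's phrasing fits the pattern it uses throughout (reduce everything to statements about ordinary characters via Theorem~\ref{thm:lift}). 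Both arguments rest on the same external input, the even-dimensionality of non-trivial self-dual simple modules in characteristic~$2$, which the paper states just before the corollary.
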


\begin{proof}
Observe that any complex character $\chi$ of $G$ is real valued, since its
complex conjugate satisfies $\bar\chi(g)=\chi(g^{-1})=\chi(g)$ by assumption.
If $V$ is a simple endotrivial module for $G$ in characteristic~2, then by
Theorem~\ref{thm:lift} it is the 2-modular reduction of a simple
$\CC G$-module, which is self-dual since its character is real. But then $V$
is self-dual and thus either trivial or of even dimension; the latter being
excluded by Lemma~\ref{lem:deg}.
\end{proof}

\begin{prop}   \label{prop:unip l=2}
 Let $G$ be a finite simple group of Lie type.
 Let $\chi\ne1$ be a unipotent character of $G$ whose 2-modular reduction
 $\chi^0$ is irreducible. Then $\chi^0$ is not endotrivial.
\end{prop}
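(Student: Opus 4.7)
The plan is to adapt the argument of Corollary~\ref{cor:self} to the setting of unipotent characters. The key input I would invoke is the known fact, coming out of Lusztig's classification, that every unipotent character of a finite reductive group is real-valued: this can be read off from the explicit Lusztig parametrization and the computation of unipotent character values, and in the exceptional types $E_7$ and $E_8$, where irrational unipotent characters do occur, the values still lie in real subfields of $\CC$.

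Granting this, the Brauer character $\chi^0$ obtained as the restriction of $\chi$ to $2$-regular classes satisfies $\chi^0(g^{-1}) = \overline{\chi^0(g)} = \chi^0(g)$ for every $2$-regular element $g$. Hence the simple $kG$-module affording $\chi^0$ is isomorphic to its dual, i.e.\ self-dual.

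Next I would invoke the standard fact that, in characteristic $2$, every non-trivial self-dual simple module $V$ has even dimension. Indeed, self-duality yields a non-zero $G$-invariant bilinear form on $V$, which must be alternating (a non-degenerate symmetric bilinear form in characteristic $2$ with a non-isotropic vector would provide a proper $G$-stable hyperplane), and alternating forms exist only on even-dimensional spaces.

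Finally, if $\chi^0$ were endotrivial, Lemma~\ref{lem:deg} would force $\dim\chi^0\equiv\pm 1\pmod{|G|_2/2}$. Since $|G|_2\ge 4$ for every finite simple group of Lie type relevant here (the defining characteristic being odd), $\dim\chi^0$ is odd, contradicting the even-dimensionality just obtained. Hence $\chi^0$ cannot be endotrivial.

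The principal obstacle I anticipate is the clean justification of the real-valuedness of all unipotent characters, particularly for cuspidal unipotent characters of exceptional type; this is essentially folklore, but reduces to inspecting Lusztig's explicit value formulas (or tables in \Chevie) to confirm that any irrational values lie in a real subfield of $\CC$.
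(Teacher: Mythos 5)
Your strategy --- real-valued $\Rightarrow$ self-dual $\Rightarrow$ even-dimensional in characteristic~$2$ $\Rightarrow$ not endotrivial by Lemma~\ref{lem:deg} --- is exactly the argument the paper uses for the bulk of the unipotent characters, and your justification that a non-trivial self-dual simple module in characteristic~$2$ has even dimension (Fong's lemma) is fine. However, the input you yourself flag as the ``principal obstacle'' is in fact false: not every unipotent character is real-valued. Complex conjugation permutes the unipotent characters and conjugates the associated eigenvalue of Frobenius, so whenever that eigenvalue is non-real the character is interchanged with its partner carrying the conjugate eigenvalue. Thus the pairs $E_6[\theta],E_6[\theta^2]$, $\tw2E_6[\theta],\tw2E_6[\theta^2]$, $E_7[\xi],E_7[-\xi]$, $E_8[i],E_8[-i]$, $E_8[\theta],E_8[\theta^2]$ (and the $E_8[\zeta_5^j]$), the cuspidal pair for $\tw2G_2$, and the non-cuspidal members of their Harish--Chandra series are genuinely non-real, with imaginary quadratic character fields; their Brauer characters differ on $2$-regular (e.g.\ unipotent) classes, so self-duality of the reduction fails as well. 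These are precisely the characters for which the paper switches to a different argument: they never lie in the principal series, and a direct check shows they all have even degree, so Lemma~\ref{lem:deg} excludes them without any duality considerations. (For the remaining unipotent characters the paper invokes rationality --- they are determined by their multiplicities in the rational-valued Deligne--Lusztig characters --- rather than mere reality, but that is the same idea as yours.)

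A second omission: the proposition also covers groups defined over fields of characteristic~$2$ (e.g.\ $\SL_n(2^f)$, $\tw2B_2(2^{2f+1})$, $\tw2F_4(2^{2f+1})$), where the $2$-modular reduction is the defining-characteristic reduction; you silently assume the defining characteristic is odd. This case cannot be folded into your reality argument either, since the cuspidal unipotent characters of the Suzuki and Ree groups $\tw2B_2$ and $\tw2F_4$ are again non-real. The paper disposes of it by citing Theorem~\ref{thm:defchar}: the only faithful simple endotrivial modules in defining characteristic~$2$ are the $3$-dimensional modules of $\SL_3(2)$, and those do not afford unipotent characters.
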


\begin{proof}
For groups defined over a field of characteristic~2, this was shown in
Theorem~\ref{thm:defchar} (note that by \cite[\S13]{Ca} the 3-dimensional
characters of $\SL_3(2)$ are not unipotent). So now assume that $G$ is defined
in odd characteristic. Let $\chi$ be a unipotent complex character of $G$.
First assume that $\chi$ is uniquely determined by its multiplicities in the
Deligne--Lusztig characters. Since the latter are rational valued, $\chi$ is
also rational and thus self-dual. Hence the same is true for its 2-modular
reduction. So $\chi^0(1)$ is even by the previous remark, and $\chi^0$ cannot
be endotrivial. \par
Now by the fundamental results of Lusztig if $G$ is a classical group, then
all unipotent characters do have the
above property. For $G$ of exceptional type, the only unipotent characters not
determined by their multiplicities are those for which the associated
eigenvalue of Frobenius is non-real. These are necessarily not in the principal
series, and an easy check shows that all of them have even degree, whence we
may conclude as before.
\end{proof}

\begin{thm}   \label{thm:l=2}
 Let $G$ be a finite quasi-simple group. Then $G$ has a non-trivial simple
 endotrivial $kG$-module $V$ over a field of characteristic~2 if and
 only if one of:
 \begin{enumerate}
  \item[\rm(a)] $G=\PSL_2(q)$ with $7\le q\equiv3\pmod4$ and $\dim(V)=(q-1)/2$;
   or
  \item[\rm(b)] $G=3.\fA_6$ and $\dim(V)\in\{3,9\}$.
 \end{enumerate}
 In particular, Conjecture~\ref{conj:gen} holds for the prime~$2$.  
\end{thm}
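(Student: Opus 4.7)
The plan is to invoke the classification of finite simple groups and treat each family of $G/Z(G)$ separately, using results already established or forward-referenced in the paper. The forward implication is a verification: the $\PSL_2(q)$ case follows from Proposition~\ref{prop:SL2} together with its concluding remark, while the $3.\fA_6$ examples are recorded in Table~\ref{tab:6A} and verified in the course of proving Theorem~\ref{thm:mainAn}.

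For the reverse implication, I would first pin down a structural reduction. If $|G|_2=2$, then $T(P)=0$ and so by the setup of Section~\ref{sec:cycl} every indecomposable endotrivial module of $G$ is linear, hence trivial for a quasi-simple $G$. Otherwise $|G|_2\ge 4$ and Lemma~\ref{lem:deg} forces $\dim V$ to be odd, so $V$ is not self-dual (any non-trivial self-dual simple module in characteristic~$2$ has even dimension). By Theorem~\ref{thm:lift}, $V$ then lifts to a non-real ordinary character $\chi$ which, by Corollary~\ref{cor:zero}, satisfies $|\chi(g)|=1$ on all $2$-singular classes. This gives the basic sieve applied in each case.

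The case analysis is now short. If $G/Z(G)$ is alternating, Theorem~\ref{thm:mainAn} leaves precisely the $3.\fA_6$ entries of Table~\ref{tab:6A}. If $G/Z(G)$ is sporadic, the forthcoming Theorem~\ref{thm:mainspor} yields no characteristic-$2$ examples. If $G/Z(G)$ is of Lie type in defining characteristic~$2$, Theorem~\ref{thm:defchar} gives only $\SL_3(2)\cong\PSL_2(7)$, recovering case~(a) with $q=7$. For Lie type in odd defining characteristic, Proposition~\ref{prop:SL2} handles $\PSL_2(q)$ and its covers (noting that in the stated range the relevant modules are endotrivial over $\PSL_2(q)$ but not over $\SL_2(q)$).

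The main obstacle is ruling out all remaining quasi-simple groups of Lie type in odd characteristic. For this I would combine the tools of Section~\ref{sec:cross}: Proposition~\ref{prop:d-sylow} restricts the Lusztig series of $\chi$ to those whose semisimple label sits in a torus containing a Sylow $d_2(q)$-torus, Proposition~\ref{prop:unip l=2} excludes unipotent candidates (since their lifts are rational and thus self-dual), and Proposition~\ref{prop:vanish} constrains the semisimple support of $\chi$. Combined with the non-reality requirement and the sharp degree congruence $\chi(1)\equiv\pm 1\pmod{|G|_2/2}$ from Lemma~\ref{lem:deg}, these should eliminate all remaining types: exceptional groups reduce to a finite check against the known character degree lists, while for classical groups of rank at least two the $2$-part of $|G|$ grows too quickly for any non-real irreducible character to meet the degree congruence. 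With the classification in hand, Conjecture~\ref{conj:gen}(a) is immediate, since $\PSL_2(q)$ with $q\equiv 3\pmod 4$ has dihedral Sylow $2$-subgroup (rank~$2$) and $3.\fA_6$ has Klein-four Sylow $2$-subgroup.
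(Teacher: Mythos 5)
Your overall architecture coincides with the paper's: reduce via the classification to the various families, quote Theorems~\ref{thm:mainAn}, \ref{thm:mainspor}, \ref{thm:defchar}, \ref{thm:low} and~\ref{thm:exc1} together with Proposition~\ref{prop:SL2} for the easy families, and then fight the classical groups in odd defining characteristic with the tools of Section~\ref{sec:cross}. The forward direction and the concluding remark on Conjecture~\ref{conj:gen} are fine.

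The genuine gap is in your final step for classical groups. You claim that for classical groups of rank at least two ``the $2$-part of $|G|$ grows too quickly for any non-real irreducible character to meet the degree congruence.'' This is false: for $G=\SL_3(q)$ with $q\equiv3\pmod4$ there are irreducible characters of degree $q(q^2+q+1)$ satisfying $\chi(1)\equiv\pm1\pmod{|G|_2}$, and for $\SL_4(q)$ (resp.\ $\SU_4(q)$) there are non-unipotent odd-degree characters of degrees $\Ph3\Ph4/2$ and $q^2\Ph3\Ph4/2$ which also pass the congruence test of Lemma~\ref{lem:deg}. The paper has to kill these survivors by other means: the $\SL_3$/$\SU_3$ candidates do not remain irreducible modulo~$2$ (a result of Hiss), and the $\SL_4$/$\SU_4$ candidates are of $\Ph4$-defect zero and hence vanish on elements of order $2\Ph4$, contradicting Corollary~\ref{cor:zero}. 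For rank $\ge5$ in type $A$, and for types $B$, $C$, $D$, the actual elimination is not a degree count at all but a case-by-case application of Proposition~\ref{prop:vanish}: one exhibits two or three specific maximal tori of $G^*$ containing regular elements of even order and checks that no centralizer $C_{G^*}(s)$ with $s\ne1$ (constrained by Proposition~\ref{prop:d-sylow} to lie in the centralizer of a Sylow $d$-torus, $d\in\{1,2\}$) can contain all of them; the case $\Spin_{4n}^-(q)$ is disposed of separately because all its elements are real. Your non-reality sieve is correct as a statement but is only decisive in that last case; elsewhere non-real characters of the right degree genuinely exist, so without the explicit torus analysis (or an equivalent argument) the reverse implication for classical groups is not established.
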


\begin{proof}
We go through the various possibilities for $G$ according to the classification.
If $G$ is alternating, the claim is Theorem~\ref{thm:mainAn}, for $G$ sporadic
there are no non-trivial simple endotrivial modules in characteristic~2
by Theorem~\ref{thm:mainspor} below. If $G$ is of Lie type over a field of even
order, the endotrivial simple modules were obtained in
Theorem~\ref{thm:defchar}. If $G$ is an
exceptional group of Lie type in odd characteristic, the claim will follow from
Theorems~\ref{thm:low} and~\ref{thm:exc1} below. Thus we only need to deal with
classical groups of Lie type. Moreover, by Proposition~\ref{prop:unip l=2} we
may assume that $V$ is not the reduction of a unipotent representation. \par

First let $G=\SL_n(q)$ with $q$ odd. By Proposition~\ref{prop:SL2} we
may assume that $n\ge3$. For $n=3$ it follows from the explicitly known
character table \cite{Chv} that among the complex irreducible characters the
only candidates have dimension $q(q^2+q+1)$, and $q\equiv3\pmod4$. But by
\cite[App.]{Hi04} these characters do not remain irreducible modulo~2. 
For $n=4$, the only
non-unipotent complex irreducible characters of odd degree are of
degrees $\Ph3\Ph4/2,q^2\Ph3\Ph4/2$ and lie in the Lusztig series of semisimple
elements with centralizer $\GL_2(q^2).2/(q-1)$ in the dual group. (Here, and
later, we write $\Phi_d$ for the $d$th cyclotomic polynomial evaluated at $q$.)
They satisfy the congruence in Lemma~\ref{lem:deg} when $q\equiv3\pmod4$.
But then there exist elements of order $2\Ph4$ in $G/Z(G)$ (in a torus of
order $(q^4-1)/(q-1)$), while both characters are of $\Ph4$-defect zero.
\par
Now assume that $n\ge5$. By Proposition~\ref{prop:d-sylow}, $V$ is the
reduction of an ordinary representation lying in the Lusztig series of a 
semisimple element $s$ of $G^*=\PGL_n(q)$ contained in the centralizer of a
Sylow $d$-torus, with $d\in\{1,2\}$. These have order $\Ph1^{n-1}$,
$(\Ph1\Ph2)^{\lfloor{(n-1)}/2\rfloor}(\Ph1)^\delta$ respectively, with
$\delta\in\{0,1\}$. Thus, the centralizers of elements in such tori are
contained in centralizers of $\Ph1$- or $\Ph2$-tori, so are (images under the
natural map $\GL_n(q)\rightarrow G^*$) of products of groups $\GL_m(q)$ and
$\GL_m(q^2)$. If $n$ is odd, a maximal torus of type $(n-1)(1)$ of $G^*$
contains regular elements of even order in $[G^*,G^*]$, so $|C_{G^*}(s)|$ must
be divisible by a primitive prime divisor of $q^{n-1}-1$. This is only the
case for $\GL_{n-1}(q)$ and $\GL_{(n-1)/2}(q^2)$. But there also exist
regular elements in maximal tori of type $(n-2)(2)$ and as none of the
candidate groups contains such a torus, we are done by
Proposition~\ref{prop:vanish}. If $n$ is even, we argue similarly that no
centralizer contains maximal tori of types $(n-2)(2)$ and $(n-3)(3)$. 
\par
Next assume that $G=\SU_n(q)$ with $q$ odd, $n\ge3$. When $n=3$, \cite{Chv}
shows that the only candidates are characters of degree $q\Ph6$ when
$q\equiv1\pmod4$, but again these are not irreducible mod~2. For $n=4$, the
only candidates are of degree $\Ph4\Ph6/2,q^2\Ph4\Ph6/2$, with
$q\equiv1\pmod4$. Again, these vanish on elements of order $2\Ph4$ of $G/Z(G)$.
Finally, for $n\ge5$ we may argue as for $\SL_n(q)$, replacing all tori by
their Ennola duals, with order obtained by simply replacing $q$ by $-q$.
\par
If $G$ is of type $B_n,C_n$ or $D_{2n}$, then the Sylow $d$-tori of $G^*$
are maximal tori, hence self-centralizing, and the only elements $s$
centralizing a Sylow 2-subgroup are involutions. By \cite{Chv} no examples
arise for $G=\Sp_4(q)$. Now assume that $G=\Spin_{2n+1}(q)$, $n\ge3$. Recall
that the $G$-conjugacy classes of maximal tori are parametrized by conjugacy
classes in the Weyl group (see e.g. \cite[Prop.~25.1]{MT}), which in turn
are naturally indexed by pairs of partitions of $n$. The maximal tori of $G^*$
of type $((1),(n-1))$ contain
regular elements of even order, thus $C_G(s)$ is of type $C_1C_{n-1}$, or
$\tw2A_n$ when $n$ is even. On the other hand, neither of these contains a
maximal torus of type $(-,(n-2)(2))$, so we may conclude by
Proposition~\ref{prop:vanish}.  Next take $G=\Sp_{2n}(q)$, $n\ge3$. Again
arguing with the maximal torus of type $((1),(n-1))$, we are left with the
possible centralizers of type $B_{n-1},\tw2D_{n-1}B_1,D_n$ and $\tw2D_n$.
Only the one of type $D_n$ contains maximal tori of type $(-,(n-2)(2))$, but
it does not contain a maximal torus of type $((2),(n-2))$.
\par
Next let $G=\Spin_{2n}^+(q)$, $n\ge4$. Arguing with the maximal tori
of type $(-,(n-1)(1))$ we see that only characters in Lusztig series
parametrized by involutions with centralizer of type $\tw2D_{n-1}$, or
$\tw2A_{n-1}$ with $n$ even, matter. But these do not contain tori of type
$(-,(n-2)(2))$. The maximal tori in type $\tw2D_n$ for $n$ odd are just
Ennola dual to those in type $D_n$, and then an analogous argument deals with
this case. Finally, all elements in $\Spin_{4n}^-(q)$, $n\ge2$, are real by
\cite[Thm.~1.2]{TZ}, so no examples arise by Corollary~\ref{cor:self}.
\end{proof}

\subsection{Small rank exceptional groups}
We now consider in more detail the exceptional groups of Lie type, and first
treat the five families of small rank, that is, the Suzuki and Ree groups
$\tw2B_2(2^{2f+1})$, $^2G_2(3^{2f+1})$ and
$\tw2F_4(2^{2f+1})$, and the groups $G_2(q)$ and $\tw3D_4(q)$. For all of these
complete ordinary character tables are available, which makes it relatively
easy to find the candidates for simple endotrivial modules in these cases.

\begin{thm}   \label{thm:low}
 Let $G$ be a covering group of one of the simple groups $\tw2B_2(2^{2f+1})$
 (with $f\ge1$), $^2G_2(3^{2f+1})$ (with $f\ge1$), $G_2(q)$ (with $q\ge3$),
 $\tw3D_4(q)$, or $\tw2F_4(2^{2f+1})$ (with $f\ge1$). Let $\ell\ne p$ denote
 a prime divisor of $|G|$ and $P$ a Sylow $\ell$-subgroup of $G$.
 \begin{enumerate}
  \item[\rm(a)] If there exists a non-trivial simple endotrivial
   $kG$-module then $P$ is cyclic.
  \item[\rm(b)] The simple endotrivial $kG$-modules for primes $\ell$
   such that $P$ is cyclic are precisely as given in Tables~\ref{tab:lowcyc}
   and~\ref{tab:exc1d} (where $\ell |\Phi_d$).
 \end{enumerate}
\end{thm}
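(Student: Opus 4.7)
The plan is to work family by family through the five groups $\tw2B_2(q), \tw2G_2(q), G_2(q), \tw3D_4(q), \tw2F_4(q)$, and for each, prime by prime through the divisors $\ell\ne p$ of $|G|$. The generic principle is that, by Theorem~\ref{thm:lift}, every simple endotrivial $kG$-module $V$ lifts to an ordinary character $\chi$ subject to two very restrictive constraints: $\chi(1)\equiv\pm1\pmod{|G|_\ell}$ by Lemma~\ref{lem:deg}, and $|\chi(g)|=1$ for every $\ell$-singular element $g\in G$ by Corollary~\ref{cor:zero}. Since complete ordinary character tables of all five families are available in the literature (Suzuki; Ward for $\tw2G_2$; Chang--Ree and Enomoto for $G_2$; Deriziotis--Michler for $\tw3D_4$; Malle for $\tw2F_4$), the strategy is to use the character tables directly.

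For part~(a), fix $\ell$ such that a Sylow $\ell$-subgroup $P$ is non-cyclic. Reading the order formula, non-cyclic $P$ occurs precisely when $\ell$ divides two factors of the generic polynomial order of $G$, equivalently when $\ell\in\{2,3\}$ for small defining characteristic. I would first filter the character table by the degree condition of Lemma~\ref{lem:deg}, producing a short list of candidate characters. For each such $\chi$ I would then exhibit an explicit $\ell$-singular element $g$ (typically a regular element in a non-cyclic $\ell$-local subgroup, or a product of a non-trivial $\ell$-element with a commuting $\ell'$-element of suitable order) on which either $\chi(g)=0$ or $|\chi(g)|>1$, so that $\chi$ is ruled out by Corollary~\ref{cor:zero}. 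Experience with smaller Lie type groups (as in the proof of Theorem~\ref{thm:l=2}) shows that this is always possible outside the trivial character.

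For part~(b), when $P$ is cyclic the full cyclic-defect machinery of Section~\ref{sec:cycl} applies. By Lemma~\ref{lem:X/e}(c) the number of $\ell$-blocks of $G$ containing endotrivial modules is $|X(H)|/e$ where $H=N_G(Z)$ for $Z\le P$ of order~$\ell$, and within each such block Theorem~\ref{thm:ETleaves} identifies the simple endotrivial modules as precisely the ones parametrised by non-exceptional leaves of the Brauer tree $\sigma(\bB)$. I would therefore read off the answer from the known Brauer trees for the groups in question (computed in work of Fong--Srinivasan, Hiss, Shamash, Geck, Himstedt, and others for $\tw2B_2, \tw2G_2, G_2, \tw3D_4, \tw2F_4$), matching the non-exceptional leaves to ordinary characters and recording their reductions; this produces the entries of Tables~\ref{tab:lowcyc} and~\ref{tab:exc1d}, indexed by the relevant cyclotomic polynomial $\Phi_d$ dividing $|G|$.

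The main obstacle will be part~(b) for a few primes where the Brauer tree has not been determined in the published literature in sufficient detail (notably certain primes in $\tw3D_4(q)$ and $\tw2F_4(q^2)$, and in $G_2(q)$ for $\ell\mid\Ph3\Ph6$), in which case one must either supplement the argument with a direct decomposition-matrix computation, or deduce the shape of the tree using the known degrees of the ordinary characters in the block, together with the inertial index $e$ and the fact that, by Theorem~\ref{thm:ETleaves} together with Lemma~\ref{lem:X/e}(c), at least one leaf must be labelled by a character of $\ell'$-degree (the Green correspondent of a linear character of $H$). A secondary bookkeeping issue is the passage between the simple group and its covering group, but the Schur multipliers of the five families are trivial or of order a power of the defining characteristic (with the small exceptions of $G_2(3)$ and $G_2(4)$, which can be handled explicitly using \GAP).
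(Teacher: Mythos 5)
Your overall strategy is the same as the paper's: lift via Theorem~\ref{thm:lift}, filter candidates by Lemma~\ref{lem:deg} and Corollary~\ref{cor:zero} using the (generic) character tables for part~(a), and run the cyclic-defect machinery of Section~\ref{sec:cycl} (Lemma~\ref{lem:X/e}, Theorem~\ref{thm:ETleaves}, known Brauer trees) for part~(b). However, there are two genuine gaps. First, your claim that for part~(a) one can ``always'' kill every non-trivial candidate by exhibiting an $\ell$-singular element on which the character vanishes or has absolute value greater than~$1$ is false. For $G={}^2G_2(q^2)$ with $\ell=2$, the character of degree $q^4-q^2+1$ passes every character-value test, and it is eliminated only because the decomposition matrix of Landrock--Michler shows it does not remain irreducible modulo~$2$; likewise for $\tw2F_4(q^2)$ with $\ell=5$ two families of characters of degree $(q^4-1)(q^4-q^2+1)(q^{12}+1)(q^2\pm\sqrt2 q+1)$ survive the value tests and are reducible modulo~$\ell$ by Himstedt's decomposition matrices. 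So character values alone do not close part~(a); you need modular irreducibility information at exactly these points. (A smaller slip in the same part: the primes with non-cyclic Sylow are not just $\ell\in\{2,3\}$ --- for $G_2(q)$, $\tw3D_4(q)$, $\tw2F_4(q^2)$ they are all divisors of $q^2-1$, $q^6-1$, $q^8-1$ respectively, and the paper must in particular argue separately for $\ell\ge5$ dividing $q\pm1$ in $G_2(q)$. Your generic method still applies, but the stated reduction is wrong.)

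Second, your treatment of covering groups is off. The exceptional Schur multipliers here ($2^2$ for $\tw2B_2(8)$, $3$ for $G_2(3)$, $2$ for $G_2(4)$) are all powers of the \emph{defining} characteristic, but that only kills faithful modules in defining characteristic; in cross characteristic $\ell\ne p$ the faithful blocks of $2.\tw2B_2(8)$, $3.G_2(3)$ and $2.G_2(4)$ genuinely contribute, and indeed Table~\ref{tab:exc1d} --- which is part of the statement you are proving --- consists precisely of simple endotrivial modules for these three covers. Your proposal names only $G_2(3)$ and $G_2(4)$ and omits $2.\tw2B_2(8)$ entirely, and the dichotomy ``trivial or a power of the defining characteristic, with exceptions $G_2(3)$, $G_2(4)$'' is internally inconsistent, since those two multipliers are also powers of the defining characteristic. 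To complete part~(b) you must treat the faithful blocks of all three exceptional covers, via their Brauer trees in the cyclic case and via the ordinary character tables together with the usual criteria for the small non-cyclic primes.
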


\begin{table}[htbp]
 \caption{Simple endotrivial modules for low rank exceptional groups}
  \label{tab:lowcyc}
\[\begin{array}{|r|r|r|c|c|c|}
\hline
 G& d& |X(H)|& |X(H)|/e& \text{block}& \lse(\bB)\\
\hline \hline
 \tw2B_2(q^2)&  1'&    2& 1&   1&  2\\
 \tw2B_2(q^2)&  8'&    4& 1&   1&  3\\
 \tw2B_2(q^2)& 8''&    4& 1&   1&  3\\
\hline \hline
 ^2G_2(q^2)&    1'&    4& 2& 1,2&  2\\
 ^2G_2(q^2)&     4&    6& 1&   1&  5\\
 ^2G_2(q^2)&   12'&    6& 1&   1&  5\\
 ^2G_2(q^2)&  12''&    6& 1&   1&  5\\
\hline \hline
 G_2(q)&         3&    6& 1&   1&  4\\
 G_2(q)&         6&    6& 1&   1&  4\\
\hline \hline
 \tw3D_4(q)&    12&    4& 1&   1&  2\\
\hline \hline
 \tw2F_4(q^2)&  12&    6& 1&   1&  4\\
 \tw2F_4(q^2)& 24'&   12& 1&   1&  8\\
 \tw2F_4(q^2)& 24''&  12& 1&   1&  8\\
\hline
\end{array}\]
Here $\Phi_1'=q^2-1$, $\Phi_8'=q^2+\sqrt{2}q+1$, $\Phi_8''=q^2-\sqrt{2}q+1$,
$\Phi_{12}'=q^2+\sqrt{3}q+1$, $\Phi_{12}''=q^2-\sqrt{3}q+1$,
$\Phi_{24}'=q^4+\sqrt{2}q^3+q^2+\sqrt{2}q+1$,
$\Phi_{24}''=q^4-\sqrt{2}q^3+q^2-\sqrt{2}q+1$.
\end{table}

\begin{table}[htbp]
 \caption{Simple endotrivial modules for exceptional covering groups}
  \label{tab:exc1d}
\[\begin{array}{|r|r|r|c|r|}
\hline
 G& d& |X(H)|& |X(H)|/e& \dim V\\
\hline \hline
 2.\tw2B_2(8)&  5&  8&  2&  56,56\\
 2.\tw2B_2(8)&  7&  4&  2&  64\\
 2.\tw2B_2(8)& 13&  8&  2&  40,40\\
\hline \hline
 3.G_2(3)&      7& 18&  3&  27,27,351,351,729\ (2\times\text{each})\\
 3.G_2(3)&     13& 18&  3&  27,27,378,378,729\ (2\times\text{each})\\
\hline \hline
 2.G_2(4)&      7& 12&  2&  104,104\\
 2.G_2(4)&     13& 12&  2&  12,1260\\
\hline
\end{array}\]
\end{table}

\begin{proof}
We first deal with the case that $G$ itself is simple. First assume that $P$
is cyclic. It turns out that $|X(N_G(P))|/e=1$ in all cases except for
$^2G_2(q^2)$ with $\ell|q^2-1$, so that by Lemma~\ref{lem:X/e} the only
$\ell$-block containing simple endotrivial modules is the principal block.
The position of endotrivial modules on the Brauer tree is then described by
Theorem~\ref{thm:ETleaves}. The Brauer trees for these groups have been
determined by Hi\ss\ \cite{Hi90}. From this, the results in
Table~\ref{tab:lowcyc} follow. For the case of $^2G_2(q^2)$ with $\ell|q^2-1$
we obtain $|X(N_G(P))|/e=2$, and again the corresponding Brauer trees can be
found in \cite{Hi90}.
\par
So now assume that $P$ is not cyclic. Note that all Sylow $\ell$-subgroups
of $\tw2B_2(2^{2f+1})$, for $\ell\ne2$, are cyclic, so there is nothing to
prove for these groups. First assume that $G={}^2G_2(q^2)$, with
$q^2=3^{2f+1}$. The only prime $\ell\ne3$ for which the
Sylow $\ell$-subgroups of $G$ are not cyclic is $\ell=2$. From the known
generic character table (see \Chevie{} \cite{Chv}) it is easy to check that
only one non-trivial complex irreducible character $\chi$, of degree
$q^4-q^2+1$, has the property that its value on
involutions is of absolute value~1. But from the known decomposition matrix
of $G$ in \cite{LMi} it follows that this character does not remain
irreducible modulo~2.
\par
Next assume that $G=G_2(q)$ with $q>2$. The relevant primes $\ell$ in this
case are exactly the prime divisors of $q^2-1$.
From the character tables in \cite{Chv} one sees that all non-trivial
irreducible characters $\chi\in\Irr(G)$ satisfy $|\chi(g)|^2\ne1$ on 2-central
involutions (when $p\ne2$) and on 3-central elements (when $p\ne3$), so we
may assume that $\ell\ge5$ divides exactly one of $q-1$ or $q+1$.
In both cases, the non-trivial characters of degree congruent to
$\pm1\pmod\ell$ are seen to take values either~0 or of absolute value bigger
than~1 on suitable $\ell$-singular elements.
\par
For $G=\tw3D_4(q)$  the relevant primes are the prime divisors of $q^6-1$.
The argument is now completely parallel to the one for $G_2(q)$ above,
using the generic character tables.
\par
Finally assume that $G=\tw2F_4(q^2)$ with $q^2=2^{2f+1}\ge8$. Here, the
relevant primes are the prime divisors of $q^8-1$. We may argue as in the
previous cases, using the generic character table in \cite{Chv}. The only
remaining candidates occur for $\ell=5$. They are characters of degree
$(q^4-1)(q^4-q^2+1)(q^{12}+1)(q^2+\sqrt{2}q+1)$ when $f\equiv1,2\pmod4$, or
$(q^4-1)(q^4-q^2+1)(q^{12}+1)(q^2-\sqrt{2}q+1)$ when $f\equiv0,3\pmod4$. But
according to \cite[Tables~C3 and~C4]{Him}, these characters are reducible
modulo $\ell$.
(They are denoted $\chi_{8,1}$ respectively $\chi_{10,1}$ in loc.~cit.)
\par
The only proper covering groups in our situation are the groups $2.\tw2B_2(8)$,
$3.G_2(3)$ and $2.G_2(4)$. When the Sylow $p$-subgroups are cyclic, we may
conclude by using the criteria in Theorem~\ref{thm:ETleaves} and information
on the Brauer trees. The Sylow $p$-subgroups of the groups in question are
non-cyclic only for $p\le5$ for $2.G_2(4)$, respectively $p\le3$ for
$2.\tw2B_2(8)$ and $3.G_2(3)$. The ordinary character tables are known for all
of these groups and the usual criteria give the claim.
\end{proof}

\subsection{Exceptional groups of large rank}
We now turn to the exceptional groups of rank at least four, for which no
complete generic character tables are available. We obtain an almost complete
picture for unipotent characters:

\begin{prop}   \label{prop:excunip}
 Let $G$ be a finite simple exceptional group of Lie type in characteristic~$p$
 of rank at least~4 and $\ell\ne p$ a prime for which the Sylow $\ell$-subgroups
 of $G$ are non-cyclic. Then the candidates for non-trivial unipotent characters
 with endotrivial $\ell$-modular reduction are given in
 Table~\ref{tab:unipexc}.
\end{prop}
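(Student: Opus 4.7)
The plan is to proceed case-by-case through the five families $F_4(q)$, $E_6(q)$, $\tw2E_6(q)$, $E_7(q)$, $E_8(q)$. For each such $G$ I would first enumerate the integers $d$ for which $\Ph d$ divides the order polynomial of $G$ with multiplicity $a_d\ge 2$: by the general theory of Sylow $d$-tori these are precisely the values of $d$ giving rise to non-cyclic Sylow $\ell$-subgroups when $\ell$ is a (large enough) primitive prime divisor of $\Ph d(q)$. The few small primes $\ell$ dividing the order of the Weyl group are handled separately, because for them the isomorphism type of the Sylow $\ell$-subgroup can differ from the generic pattern and the pair $(\ell,d)$ is not uniquely determined.

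The main filter is the degree congruence of Lemma~\ref{lem:deg}: any unipotent character $\chi$ whose $\ell$-modular reduction is endotrivial must satisfy $\chi(1)\equiv\pm 1\pmod{|G|_\ell}$, which generically reduces to a congruence modulo $\Ph d(q)^{a_d}$. Since the generic degrees of unipotent characters are explicit polynomials in $q$ and are available in \Chevie, I would loop through them, retain those for which the $\Ph d$-multiplicity of $\chi(1)$ equals $a_d$ and whose cofactor reduces to $\pm1$ modulo $\Ph d$, and discard the rest. This degree filter is extremely restrictive and already cuts the list down to a handful of candidates for each pair $(G,d)$. For the surviving candidates I would then invoke Corollary~\ref{cor:zero}, which by Theorem~\ref{thm:lift} forces $|\chi(g)|=1$ on every $\ell$-singular element $g$; combined with Proposition~\ref{prop:vanish} and $d$-Harish--Chandra theory, this lets one test vanishing on regular $\ell$-elements of a Sylow $d$-torus and on the mixed semisimple-unipotent $\ell$-singular classes by Lusztig induction from proper $d$-split Levi subgroups, whose unipotent character tables are again polynomially known.

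The main obstacle is the lack of a full generic character table for $E_7$ and $E_8$: vanishing on some mixed $\ell$-singular conjugacy classes cannot be verified directly, which is precisely what accounts for the ``almost complete'' qualifier in the statement. To circumvent this as far as possible I would exploit that the non-unipotent constituents of a unipotent $\ell$-block are controlled by $d$-Harish--Chandra theory, so Lusztig induction from $d$-split Levi subgroups of classical or smaller exceptional type, together with the explicit generic degrees of unipotent characters in those smaller groups, suffices in nearly all remaining cases. The output is an enumeration of candidate unipotent characters that matches the entries of Table~\ref{tab:unipexc}, with a small number of borderline cases where only the necessary conditions from Lemma~\ref{lem:deg} and Corollary~\ref{cor:zero} can be checked and no further obstruction is found.
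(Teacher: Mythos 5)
Your overall architecture matches the paper's: restrict the possible $d=d_\ell(q)$ by requiring $\Phi_d$ to divide the order polynomial at least twice, filter the explicitly known unipotent degrees by the congruence of Lemma~\ref{lem:deg}, impose the Harish--Chandra restriction congruences of Lemmas~\ref{lem:HC} and~\ref{lem:HC e=1}, and then attack the survivors with vanishing arguments via Corollary~\ref{cor:zero}. However, there are three concrete problems. First, your degree filter is stated backwards: you propose to \emph{retain} the characters whose degree has $\Phi_d$-multiplicity equal to $a_d$, but those are precisely the $\ell$-defect-zero characters, whose reductions are projective and which Lemma~\ref{lem:deg} immediately excludes. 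The correct filter is $\Phi_d\nmid\chi(1)$ together with $\chi(1)\equiv\pm1\pmod{|G|_\ell}$. Second, Proposition~\ref{prop:vanish} is vacuous for unipotent characters: they lie in $\cE(G,1)$, so $C_{G^*}(s)=G^*$ contains every maximal torus and the proposition imposes no constraint. It does real work only in the non-unipotent situation of Theorem~\ref{thm:exc1}.

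Third, and most importantly, your plan for the surviving candidates (computing values on mixed $\ell$-singular classes by Lusztig induction from $d$-split Levis) is not what makes the final eliminations go through, and it is exactly where the missing generic character tables for $E_7$ and $E_8$ would stall you. The paper's device is more elementary: several of the surviving cuspidal characters (e.g.\ $F_4[-1]$ for $\ell=5$ and $F_4[\pm i]$, or the $d=2$ candidates in $E_6$ of $\Phi_5$- or $\Phi_8$-defect zero) have defect zero for some \emph{auxiliary} prime $r$ (typically a Zsigmondy prime of some other $\Phi_e$), while $G$ contains a maximal torus with elements of order $r\ell$. Such an element is $\ell$-singular, and an $r$-defect-zero character vanishes on it, contradicting Corollary~\ref{cor:zero}. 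This purely arithmetic argument needs only torus orders, not character values, and is what separates, say, $F_4^{II}[1]$ (which survives into Table~\ref{tab:unipexc}) from $F_4[\pm i]$ (which does not). Without it your candidate list would be strictly larger than the table. (You also omit the reduction to $\ell\ne2$ via Proposition~\ref{prop:unip l=2} and the elimination of the Steinberg character, both minor but necessary.)
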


\begin{table}[htbp]
 \caption{Candidates for endotrivial unipotent characters}
  \label{tab:unipexc}
\[\begin{array}{|r|r|r|l|}
\hline
 G& d& \ell& \chi\\
\hline \hline
     F_4&  4&  5&  F_4^{II}[1]\\
     E_6&  4&  5&  D_4,r,\ \phi_{80,7}\\
     E_6&  6& 19&  \phi_{6,25}\\
 \tw2E_6&  4&  5&  \tw2E_6[1],\ \phi_{16,5}\\
     E_8& 10& 31&  \phi_{28,68}\\
\hline
\end{array}\]
Here, the notation for unipotent characters is as in \cite[\S13]{Ca}.
\end{table}

\begin{proof}
First note that we may assume $\ell\ne2$ by Proposition~\ref{prop:unip l=2}.
Also, the Steinberg character is not endotrivial. Indeed,
since we assume that the Sylow $\ell$-subgroups of $G$ are non-cyclic, there
exist $\ell$-elements $g\in G$ with centralizer of positive semisimple rank.
Then $C_G(g)$ contains unipotent elements, and thus there exist
$p$-singular elements in $G$ of order divisible by $\ell$. But the Steinberg
character takes value~0 on all $p$-singular elements, which shows that it
cannot be endotrivial by Corollary~\ref{cor:zero}. \par
The degrees of the unipotent characters of groups of Lie type are known; they
can be found in \cite[\S13]{Ca} or in \cite{Chv}, for example. Let
$d:=d_\ell(q)$ where $G=G(q)$.
The condition that the Sylow $\ell$-subgroups are non-cyclic forces $\Phi_d$
to divide the order polynomial of $G$ at least twice, which restricts the
possible values of $d$. For each type and each such $d$, we use the following
criteria to eliminate candidates:
\begin{itemize}
 \item we have $\chi(1)\not\equiv0\pmod{\Phi_d(q)}$;
 \item if $d=1$ then $\chi(1)$ has to satisfy the congruence in
  Lemma~\ref{lem:HC e=1}(b);
 \item the Harish-Chandra restriction of $\chi$ to proper Levi subgroups $L$
  of $G$ must satisfy the congruences in Lemma~\ref{lem:HC}; and
 \item we have $\chi(1)\equiv\pm1\pmod{|G|_\ell}$.
\end{itemize}
At this stage, usually only very few characters are left, which are then
handled by ad hoc methods. We give some indications for $G=F_4(q)$. For
$d=1,3,6$, no candidates remain. For $d=2$, we are left with the two cuspidal
unipotent characters denoted $F_4^{II}[1]$ and $F_4[-1]$, with
$\ell=11$ respectively $\ell=5$. These are of defect zero for a Zsigmondy
prime divisor (see \cite[Thm.~IX.8.3]{HB82}) $r$ of $\Phi_6$ respectively
$\Phi_3$. Since $B_4(q)\le F_4(q)$ contains maximal tori of orders
$\Ph1\Ph2\Ph3$ and $\Ph1\Ph2\Ph6$ (parametrized by the pairs of partitions
$((3),(1))$ and $((1),(3))$) there exist elements of order $r\ell$ in $G$,
so these characters cannot be
endotrivial by Corollary~\ref{cor:zero}. Note that $q\ne2$ when
$\ell=11|(q+1)$, so there does exist a Zsigmondy prime for $\Phi_6$ in our
situation. For $d=4$, there remain the cuspidal unipotent characters
$F_4[\pm i]$ and $F_4^{II}[1]$ with $\ell=5$. The character $F_4^{II}[1]$
occurs in our list, while the characters $F_4[\pm i]$ are of 3-defect zero,
and $G$ contains elements of order~15.
\par
For $G=E_6(q)$, no candidates remain for $d=1,3$. For $d=2$, all remaining
candidates are of $\Phi_5$- or $\Phi_8$-defect zero, so do not lead to examples
as $G$ contains tori of order $\Ph1\Ph2\Ph5$ and $\Ph1\Ph2\Ph8$. For $d=4$,
there remain the characters denoted $D_4,r$ and $\phi_{80,7}$ with $\ell=5$.
For $d=6$, we are left with $\phi_{6,25}$. When $G=\tw2E_6(q), E_7(q)$ or
$E_8(q)$, a completely similar argument leads to the other three candidates in
Table~\ref{tab:unipexc}.
\end{proof}

\begin{rem}   \label{rem:dont know}
Using decomposition numbers of suitable Hecke algebras one can see that the
unipotent characters $\phi_{80,7}$ for $E_6(q)$ and $\phi_{16,5}$ for
$\tw2E_6(q)$ are reducible modulo primes $\ell$ with $d_\ell(q)=4$, so they
are not endotrivial. On the other hand, the unipotent character $F_4^{II}[1]$
is endotrivial modulo~$\ell=5$ at least for $q=2$ (see Remark~\ref{rem:tors}).
We do not see how to decide
endotriviality for the other cases in Table~\ref{tab:unipexc}.
\end{rem}

We are now ready to complete the proof of Theorem~\ref{thm:gen}(d) of the
introduction:

\begin{thm}   \label{thm:exc1}
 Let $G$ be a quasi-simple exceptional group of Lie type in characteristic $p$,
 and $\ell\ne p$ a prime such that the Sylow $\ell$-subgroups of $G$ have rank at
 least~3. Then $G$ does not have faithful simple endotrivial
 $kG$-modules, where $k$ is a field of characteristic~$\ell$.
\end{thm}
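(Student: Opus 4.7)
The plan is to lift $V$ to an ordinary character via Theorem~\ref{thm:lift} and then rule out every candidate by combining Proposition~\ref{prop:d-sylow} with Jordan decomposition and the vanishing criteria from Corollary~\ref{cor:zero} and Proposition~\ref{prop:vanish}. Setting $d := d_\ell(q)$, the hypothesis that a Sylow $\ell$-subgroup of $G$ has rank at least~$3$ amounts to $\Phi_d(q)^3$ dividing $|G|$. Inspection of the generic order polynomials shows that this restricts us to $G$ simple of type $F_4$ (with $d\in\{1,2\}$), $E_6$ (with $d\in\{1,2,3\}$), $\tw2E_6$ (with $d\in\{1,2,6\}$), $E_7$ (with $d\in\{1,2,3,6\}$) or $E_8$ (with $d\in\{1,2,3,4,6\}$). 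Central extensions beyond the simply connected cover only arise in a small number of exceptional cases and can be dispatched directly from the ordinary character tables.

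Let $\chi$ be the character of the lift of $V$. By Proposition~\ref{prop:d-sylow} we have $\chi\in\cE(G,s)$ with $C_{G^*}(s)$ containing a Sylow $d$-torus $S^*$ of $G^*$. If $s=1$, then $\chi$ is unipotent, and every candidate listed in Table~\ref{tab:unipexc} occurs at a value of $d$ where $\Phi_d$ divides $|G|$ only to the second power; the stronger rank~$\ge 3$ condition therefore eliminates the entire unipotent list. So we may assume $s\neq 1$, and Jordan decomposition gives $\chi = \chi_{s,\psi}$ with $\psi$ a unipotent character of the reductive subgroup $C_{G^*}(s)$ and $\chi(1) = [G^*:C_{G^*}(s)]_{p'}\cdot\psi(1)$.

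From here, I would exploit two independent constraints. Since $|G|_\ell = |C_{G^*}(s)|_\ell$ and $[G^*:C_{G^*}(s)]$ is prime to $\ell$, the congruence of Lemma~\ref{lem:deg} imposes a non-trivial $\ell$-adic condition on $\psi(1)$ modulo $|C_{G^*}(s)|_\ell$, forcing $\psi$ to be of $\ell$-height zero and severely limiting the possibilities via an analogue of Proposition~\ref{prop:excunip} applied inside $C_{G^*}(s)$. Meanwhile, Corollary~\ref{cor:zero} asserts $|\chi(g)|=1$ for every $\ell$-singular $g\in G$, whereas Proposition~\ref{prop:vanish} forces $\chi(g)=0$ unless $g$ lies in a maximal torus whose dual is contained in $C_{G^*}(s)$. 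Because the Sylow $\ell$-subgroup has rank at least~$3$, one can exhibit $\ell$-singular elements sitting in maximal tori of several distinct Weyl-conjugacy types, so that $C_{G^*}(s)$ is forced to absorb the duals of all of them. Cross-referencing the two constraints against the short list of connected reductive subgroups of $G^*$ of maximal rank that contain $S^*$ should close the argument in each case.

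The main obstacle is the enumeration in $E_7$ and $E_8$ for $d\in\{1,2,3,6\}$: here many subsystem subgroups of maximal rank can contain a $\Phi_d$-torus of rank at least~$3$, and for each potential centralizer one must exhibit an $\ell$-singular element whose containing maximal torus escapes it. This is most efficiently organised via the parametrisation of maximal tori by $F$-conjugacy classes in the Weyl group, combined with Deriziotis-type tables of centralizers of semisimple elements in exceptional groups; in favourable cases, the additional constraint that $\psi$ is a small-degree unipotent character of $C_{G^*}(s)$ closes the argument without exhausting the full list of tori.
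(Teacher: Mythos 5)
Your overall strategy is the one the paper uses: lift via Theorem~\ref{thm:lift}, constrain the Lusztig series by Proposition~\ref{prop:d-sylow}, kill the unipotent case by observing that all of Table~\ref{tab:unipexc} lives at $\ell$-rank~$2$, and for $s\neq 1$ play Corollary~\ref{cor:zero} against Proposition~\ref{prop:vanish} by exhibiting $\ell$-singular regular elements in several types of maximal tori. However, there are two concrete gaps. First, your enumeration ``cross-referencing against the short list of connected reductive subgroups of $G^*$ of maximal rank that contain $S^*$'' misses the decisive simplification: in every case on your list $d$ is a regular number for the Weyl group, so $C_{G^*}(S^*)$ is itself a maximal torus $T^*$ and $S^*$ is already maximal except in a handful of cases ($E_6$, $d=2$; $\tw2E_6$, $d=1$; $E_7$, $d=3,6$). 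Hence $s$ is pinned into one explicitly known torus $T^*$, and the whole problem reduces to checking, via the parametrisation of maximal tori by $F$-classes in the Weyl group, which $s\in T^*$ have centralizers meeting two or three prescribed torus classes. Without this reduction your proposed enumeration of maximal-rank subgroups in $E_7$ and $E_8$ is not obviously finite in a useful sense, and you yourself flag it as the unresolved obstacle.

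Second, and more seriously, the torus argument genuinely fails in one case that you do not identify: for $G=E_8(q)$ and $d=4$ there is an isolated involution $s\in T^*$ with $C_{G^*}(s)$ of type $D_8$, and this centralizer \emph{does} contain conjugates of all the relevant maximal tori (of types $D_7(a_1)$, $A_7$, $A_7''$). So no choice of $\ell$-singular elements will force $s=1$ there, and the series $\cE(G,s)$ must be excluded by a different mechanism — the paper does this by running the degree congruences of Lemma~\ref{lem:deg} and the Harish-Chandra restriction criterion of Lemma~\ref{lem:HC}/\ref{lem:HC e=1} through that series, as in the proof of Proposition~\ref{prop:excunip}. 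Your closing remark that the smallness of $\psi$ ``closes the argument in favourable cases'' gestures at this but does not recognise that there is a specific, unavoidable case where it is the only available argument. A smaller point: your reduction to types $F_4,\dots,E_8$ via ``$\Phi_d(q)^3$ divides $|G|$'' is not valid for bad primes in the small-rank exceptional groups (e.g.\ $\ell=2$ in $G_2(q)$ with $q$ odd gives $\ell$-rank~$3$); those groups have to be handled by Theorem~\ref{thm:low}, and the genuine covering groups $2.F_4(2)$ and $2.\tw2E_6(2)$ with $\ell=3$ need the explicit character tables rather than a generic argument.
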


\begin{proof}
By Theorem~\ref{thm:low} we may assume that $G$ is of rank at least~4. If
$G$ is an exceptional covering group $2.F_4(2)$ or $2.\tw2E_6(2)$ then the
condition on $\ell$ forces $\ell=3$. Here the claim follows from the known
ordinary character tables. Thus we have that $G$ is a central factor group of
a finite reductive group of simply connected type. Let $d=d_\ell(q)$. Then
the cases to consider are:
$d=1,2$ for $F_4(q)$, $d=1,2,3$ for $E_6(q)$, $d=1,2,6$ for $\tw2E_6(q)$,
$d=1,2,3,6$ for $E_7(q)$ and $d=1,2,3,4,6$ for $E_8(q)$. \par
Let $V$ be a faithful simple endotrivial $kG$-module, the
$\ell$-modular reduction of a $\CC G$-module with character $\chi$. Since all
candidates in Proposition~\ref{prop:excunip} occur for $\ell$-rank~2, we know
that $\chi$ is not unipotent, so $\chi$ lies in some Lusztig series $\cE(G,s)$
with $s\ne1$. Furthermore, $s$ must lie in the centralizer of a Sylow
$d$-torus of $G^*$ by Proposition~\ref{prop:d-sylow}. In all cases considered
here, $d$ is a Springer regular number for the Weyl group of $G$
(see e.g.~\cite[p.~260]{BM92}), so the centralizer of
a Sylow $d$-torus of $G^*$ is a maximal torus $T^*$ of $G^*$. Moreover, a
Sylow $d$-torus of $G^*$ is already a maximal torus, whence equals $T^*$,
unless $G=E_6(q)$ and $d=2$, $G=\tw2E_6(q)$ and $d=1$, or $G=E_7(q)$ and
$d=3,6$. In Table~\ref{tab:toriexc} in each relevant case we list two or
three maximal tori of $G^*$ (in Carter's notation, see also \cite{Chv}).
We have omitted $\tw2E_6(q)$ since the relevant tori in that group are
obtained from those in $E_6(q)$ by formally replacing $q$ by $-q$ (see
\cite[\S3B]{BM92}).

\begin{table}[htbp]
 \caption{Large $\ell$-rank in exceptional groups}
  \label{tab:toriexc}
\[\begin{array}{|crl||crl|}
\hline
     G& d& \text{tori}&     G& d& \text{tori}\cr
\hline
 F_4&  1& A_2+\tilde A_1,\tilde A_2+A_1,A_3&  E_6&  1& D_5, A_4+A_1\\
    &  2& B_3,C_3,A_3&                           &  2& D_5,A_4+A_1\\
 E_8&  1& E_7,A_7''&                             &  3& A_2+2A_1,E_6\\
    &  2& A_8,A_7''&                          E_7&  1& E_6(a_1),A_6\\
    &  3& A_8,D_7&                               &  2& E_7,E_7(a_1)\\
    &  4& D_7(a_1),A_7,A_7''&                    &  3& A_4+A_2,E_6\\
    &  6& E_8(a_4),D_7&                          &  6& E_7(a_2),E_7(a_3)\\
\hline
\end{array}\]
\end{table}

Now in all cases, except when $G=E_8(q)$ and $d=4$, the only element $s\in T^*$
such that $C_{G^*}(s)$ contains conjugates of the two (or three) tori listed
in the table, is $s=1$. This is easily checked by using that maximal tori are
parametrized by (possibly twisted) conjugacy classes in the Weyl group; thus
one just has to verify that the Weyl coset of any centralizer $C_{G^*}(s)$
does not contain representatives from all two or three conjugacy classes.
Since all listed tori do contain regular elements, Proposition~\ref{prop:vanish}
implies that $\chi\in\cE(G,s)$ cannot be endotrivial when $s\ne1$ and
$(G,d)\ne(E_8(q),4)$.
\par
When $G=E_8(q)$ and $d=4$, there is an isolated element $s\in T^*$ of order~2
whose centralizer $D_8(q)$ contains all listed maximal tori. It remains to
show that $\cE(G,s)$ for this element~$s$ does not contain characters of
endotrivial modules. As $d=4$ we have $\ell\ge5$. Now the approach given for
unipotent characters in the proof of Proposition~\ref{prop:excunip} using
congruences and Harish-Chandra restriction rules out all characters in this
series.
\end{proof}

\section{Covering groups of sporadic simple groups} \label{sec:spor}

\begin{thm}   \label{thm:mainspor}
 Let $G$ be a quasi-simple group such that $G/Z(G)$ is sporadic simple. Let
 $V$ be a faithful simple endotrivial $kG$-module, where $k$ is
 algebraically closed of characteristic $p$, with $p$ dividing $|G|$. Let
 $P$ be a Sylow $p$-subgroup of $G$. Then one of the following holds:
 \begin{enumerate}
  \item[\rm(1)] $|P|=p$ and $V$ lies in a $p$-block $\bB$ of $kG$ as indicated
   in Table~\ref{tab:spd1}; or
  \item[\rm(2)] $(G,P,\dim V)$ are as in Table~\ref{tab:spor2}.
 \end{enumerate}
 Conversely, all modules listed in Table~\ref{tab:spor2} are endotrivial
 except possibly for those (of dimension at least 5824) marked by a "?" in the
 last column.
\end{thm}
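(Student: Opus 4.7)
The strategy is to run through the quasi-simple covers $G$ of the sporadic simple groups and the primes $p\mid|G|$ in turn, separating the analysis according to whether a Sylow $p$-subgroup $P\le G$ is cyclic or not. By Theorem~\ref{thm:lift} any faithful simple endotrivial $kG$-module lifts to characteristic zero, so throughout one may work with an ordinary irreducible character $\chi\in\Irr(G)$ whose restriction to $p$-regular classes is the Brauer character of $V$. The table data in both Table~\ref{tab:spd1} and Table~\ref{tab:spor2} are then produced by the enumeration described below.

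\textbf{Cyclic Sylow case.} Here Section~\ref{sec:cycl} does essentially all the work. By Lemma~\ref{lem:X/e}(c) the number of blocks containing simple endotrivial modules equals $|X(N_G(Z))|/e$, where $Z$ is the unique subgroup of $P$ of order~$p$, and Theorem~\ref{thm:ETleaves} identifies the simple endotrivial modules inside such a block $\bB$ as precisely those labelling the non-exceptional leaves of the Brauer tree $\sigma(\bB)$. The task therefore reduces to extracting the relevant data from the published Brauer trees for sporadic covers (Hi{\ss}--Lux and subsequent work) and tabulating the outcome; this produces case~(1) of the theorem together with Table~\ref{tab:spd1}.

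\textbf{Non-cyclic Sylow case.} For each ordinary character $\chi\in\Irr(G)$ (obtained in practice from the {\sf GAP} character table library) I would discard every $\chi$ failing one of the following tests: (i) the congruence $\chi(1)\equiv\pm1\pmod{|G|_p}$ of Lemma~\ref{lem:deg} (with the usual modification when $p=2$); (ii) $|\chi(g)|=1$ for every $p$-singular $g\in G$, which is Corollary~\ref{cor:zero}; (iii) $\chi$ lies in a block of full defect and has an irreducible $p$-modular reduction. For groups whose decomposition matrices are completely known --- which includes all but a few of the largest sporadics --- tests (i)--(iii) are routine and eliminate essentially every character, leaving only the short list appearing in Table~\ref{tab:spor2}.

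\textbf{Verification and the main obstacle.} For each surviving candidate one must then verify that it genuinely affords an endotrivial module. The cleanest tool is the character-level reformulation that $V$ is endotrivial if and only if every non-trivial constituent of $\chi\bar\chi$ has $p$-defect zero; when this decomposition can be read off from the character table it settles endotriviality directly, and otherwise one computes $V|_P$ on an explicit matrix representation (via the online atlas together with \MAGMA{} or \GAP{}) and checks that $V|_P$ is the sum of the trivial module and a projective. The main obstacle is that for the very largest candidates (dimensions $\ge 5824$ in a handful of sporadics) neither decomposition is presently feasible with available computing resources; these are precisely the entries marked ``?'' in Table~\ref{tab:spor2}. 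A byproduct of the whole enumeration is Conjecture~\ref{conj:gen} for sporadic $G/Z(G)$, since every surviving case has Sylow $p$-subgroup of rank at most~$2$.
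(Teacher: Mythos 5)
Your proposal follows essentially the same route as the paper: lift $V$ to an ordinary character $\chi$ via Theorem~\ref{thm:lift}, sieve $\Irr(G)$ with the degree congruence of Lemma~\ref{lem:deg}, the vanishing criterion of Corollary~\ref{cor:zero} and irreducibility of the $p$-modular reduction, handle the cyclic-Sylow case with Theorem~\ref{thm:ETleaves} and the published Brauer trees, and then verify the survivors. One claim needs correcting, though: it is \emph{not} true that $V$ is endotrivial if and only if every non-trivial constituent of $\chi\bar\chi$ has $p$-defect zero. Endotriviality is equivalent to $\chi\bar\chi-1$ being a \emph{projective} character, and a projective character in a block of positive defect has ordinary constituents of positive defect; so your condition is only sufficient, and using it as an exclusion test could wrongly discard genuine endotrivial modules. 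The paper uses it only in the sufficient direction, and supplements it with a trick your write-up skips over: for most of the large candidates one restricts $\chi$ to a well-chosen subgroup $H$ containing a Sylow $p$-subgroup (e.g.\ $\OO_8^+(2).\fS_3\le Fi_{22}$, $G_2(4)\le Suz$) and checks that $\chi|_H$ has a unique trivial constituent with all other constituents of defect zero \emph{for $H$}; by Lemma~\ref{lem:sub} this settles endotriviality from character tables alone, whereas restricting an explicit matrix representation all the way to $P$ is computationally out of reach for modules of these dimensions (the paper resorts to \MAGMA{} only for $(HS,3,154)$ and $(\tw2F_4(2)',5,351)$). With the ``if and only if'' weakened to ``if'' and the intermediate-subgroup restriction added, your argument matches the paper's.
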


Table~\ref{tab:spd1} also gives the number $\lse(\bB)$ of simple endotrivial
modules in the block $\bB$, except for one block of $2.B$ in characteristic~47
and five blocks of $M$ in various characteristics, where the Brauer trees are
not known completely. The numbering of the blocks is as given by Hi\ss\ and
Lux in \cite{HL}. 

\begin{proof}
First assume $p^2$ divides $|G|$. Let $\chi$ be the ordinary irreducible
character of $G$ belonging to the lift of $V$ from Theorem~\ref{thm:lift}.
Using Lemma~\ref{lem:deg} and Corollary~\ref{cor:zero} and the known ordinary
character tables of the quasi-simple sporadic groups (see \cite{Atl}) we obtain
the list of candidates for $\chi$.  \par
Using the character tables given in \cite{MAtl, GAP} we may discard the
characters whose reduction modulo~$p$ is not irreducible. Thus
Table~\ref{tab:spor2} lists those characters whose restriction modulo~$p$ is
irreducible, or, in the case of $J_4$, $Fi_{24}'$, $B$ and $M$, those for
which the question of irreducibility modulo $p$ is still open (these cases
are indicated by a question mark in the corresponding line). The sheer size
of these modules makes it impossible to do any direct computations.

\begin{table}[htbp]
 \caption{Candidate characters in sporadic groups}
  \label{tab:spor2}
\vskip -.5pc
\[\begin{array}{|c|l|l||c|l|l|c|}
\hline
     G& P& \chi(1)&  G& P& \chi(1)& \cr
\hline
   M_{11}& 3^2&  10,10,10&        Fi_{22}& 5^2&  1001&\cr
   M_{22}& 3^2&  55&            2.Fi_{22}& 5^2&  5824\ (4\times)& ?\cr
 2.M_{22}& 3^2&  10,10, 154,154& 3.Fi_{22}& 5^2&  351,351, 12474\ (4\times)&?\cr
   M_{23}& 3^2&  253&           6.Fi_{22}& 5^2&  61776\ (4\times)& ?\cr
       HS& 3^2&  154,154,154&          Th& 7^2&  27000,27000& ?\cr
    3.McL& 5_+^{1+2}&  126,126,126,126& Fi_{23}& 5^2&  111826&\cr
       He& 5^2&  51,51&   J_4& 11_+^{1+2}& 887778,887778, 394765284& ?\cr 
       Ru& 3_+^{1+2}&  406&        Fi_{24}'& 5^2&  74887473024& ?\cr  
     2.Ru& 3_+^{1+2}&  28,28&  B& 7^2& 9287037474, 775438738408125& ?\cr 
      Suz& 5^2&  1001&         M& 11^2&  7226910362631220625000& ?\cr 
     3.ON& 7_+^{1+2}&  342,342,342,342& \tw2F_4(2)'&3_+^{1+2}&  26,26&\cr
         &  &                 &  \tw2F_4(2)'&5^2&  26,26, 351, 351&\cr
\hline
\end{array}\]
\end{table}

The modules for $M_{11},M_{22},2.M_{22},M_{23}$ with $p=3$ are indeed
endotrivial by \cite[\S2.3]{Schu}. For the cases
$$\begin{aligned}
(G,p,\chi(1))\in& \{(2.M_{22},3,10),(3.McL,5,126),(2.Ru,3,28),(3.ON,7,342), \\
                & \,\,(\tw2F_4(2)',3,26),(\tw2F_4(2)',5,26)\}
\end{aligned}$$
it can be seen from the character table that the tensor product
$\chi\otimes \chi^*$ has one trivial constituent and one constituent of
defect zero (see also \cite[Table~7.1]{MaT}), so clearly the corresponding
$kG$-module $V$ is endotrivial. For the following configurations
the restriction of the corresponding ordinary character to a subgroup $H$
containing a Sylow $p$-subgroup of $G$ has a unique trivial constituent, and
all other constituents are of defect zero:
$$\begin{aligned}
(G,p,\chi(1),H)\in&\{(M_{23},3,253,M_{22}),(He,5,51,\PSp_4(4).2),
        (Ru,3,406,\tw2F_4(2)'),\\
        &(Suz,5,1001,G_2(4)), (Fi_{22},5,1001,\OO_8^+(2).\fS_3),(3.Fi_{22},5,351,\OO_8^+(2)),\\
        &(Fi_{23},5,111826,\OO_8^+(3).\fS_3)\}
\end{aligned}$$
so $\chi$ is the character of an endotrivial module by Lemma~\ref{lem:sub}.
Computations with \MAGMA{} \cite{MAG} show that the modules
$(G,p,\chi(1))\in\{(HS,3,154),(\tw2F_4(2)',5,351)\}$ are endotrivial.\par
Finally, we consider the cases with cyclic Sylow $p$-subgroup. The results
are collected in Table~\ref{tab:spd1}, whose entries are obtained as follows.
Let $H:=N_{G}(Z)$ as in Section~\ref{sec:cycl}, $|X(H)|=|H/[H,H]|_{p'}$
and let $e$ denote the inertial index of the principal block. Then $|X(H)|$ and
$|X(H)|/e$ are the bounds for the number of simple endotrivial $kG$-modules
given by Corollary~\ref{cor:bounds}. We then determine the simple endotrivial
$kG$-modules using the descriptions of the Brauer trees in \cite{HL},
resp.~\cite{Nae02} for $M$ in characteristic~29, as well as
Lemma~\ref{lem:X/e}, Lemma~\ref{lem:2e},  Theorem~\ref{thm:ETleaves},
Lemma~\ref{lem:deg} and Lemma~\ref{cor:zero}.
\end{proof}

\begin{rem}   \label{rem:tors}
If $V$ is the reduction modulo $p$ of a $\CC G$-module with character $\chi$
such that
$$\begin{aligned}
 (G,p,\chi(1),H)\in&\{(M_{23},3,253,M_{22}),(He,5,51,\PSp_4(4).2),
        (Ru,3,406,\tw2F_4(2)'),\\
        &(Suz,5,1001,G_2(4)), (Fi_{22},5,1001,\OO_8^+(2).\fS_3),(3.Fi_{22},5,351,\OO_8^+(2)),\\
        &(Fi_{23},5,111826,\OO_8^+(3).\fS_3)\}
\end{aligned}$$
as in the proof of Theorem~\ref{thm:mainspor}, then $V$ is not only
endotrivial, but also a trivial source $kG$-module because
$V|_{H}\cong k\oplus \text{(projective)}$. Therefore the class of $V$ in the
group $T(G)$ of endotrivial modules is a torsion element. Another such example
is given by the unipotent character $F_4^{II}[1]$ of $F_4(2)$ of degree~1326
from
Proposition~\ref{prop:excunip} (with the subgroup $H=\OO_8^+(2).3.2$). \par
Thus we have obtained a list of non-trivial torsion elements of $T(G)$ unknown
in the literature so far. This is of particular interest because a main problem
remaining in the classification of endotrivial modules is the determination
of the torsion subgroup of $T(G)$.
\end{rem}

\begin{rem}
Endotriviality is in general not preserved by Morita equivalences. Let us point
out the following example: the Janko group $J_{1}$ has four 3-blocks with
isomorphic Brauer trees, that is, isomorphic as pointed graphs equipped with
a planar embedding. Hence the four blocks are Morita equivalent
(see \cite[pp.~69--70]{HL}). However, Table~\ref{tab:spd1}
shows that only two of these blocks contain simple endotrivial modules.
\end{rem}

\begin{table}[htbp]
 \caption{Blocks of sporadic groups with cyclic defect containing simple endotrivial modules}
  \label{tab:spd1}
\[\begin{array}{|r|r|r|c|c|c|}
\hline
 G& p& |X(H)|& |X(H)|/e& \text{block } \bB&  \lse(\bB)\\
\hline \hline
\tw2F_4(2)'& 13& 6& 1&  1&    4\\
\hline \hline
M_{11}&    5&  4& 1& 1&      4\\ \cline{2-6}
M_{11}&   11&  5& 1& 1&      3\\
\hline \hline
M_{12}&    5&  8& 2& 1&       2\\
M_{12}&    5&   &  & 2&       4\\ \cline{2-6}
M_{12}&   11&  5& 1& 1&       1\\
2.M_{12}& 11& 10& 2& 2&       4\\
\hline \hline
J_{1}&     3&  4&  2&  1,4&    2\\   \cline{2-6}
J_{1}&     5&  4&  2&  1&      2\\
J_{1}&     5&   &   &  3&      1\\    \cline{2-6}
J_{1}&     7&  6&  1&  1&      2\\     \cline{2-6}
J_{1}&    11& 10&  1&  1&     2\\     \cline{2-6}
J_{1}&    19&  6&  1&  1&      2\\    \cline{2-6}
\hline \hline
M_{22}&    5&  4&  1&  1&     2\\
2.M_{22}&  5&  8&  2&  2&     2\\
3.M_{22}&  5& 12&  3&  3,4&   2\\
4.M_{22}&  5& 16&  4&  5,6&   2\\
6.M_{22}&  5& 24&  6&  7,8&   4\\
12.M_{22}& 5& 48& 12&  9,10,11,12&  3 \\  \cline{2-6}
M_{22}&    7&  3&  1&  1&      1\\
2.M_{22}&  7&  6&  2&  2&      1\\
3.M_{22}&  7&  9&  3&  3,4&    1\\
4.M_{22}&  7& 12&  4&  5,6&    1\\
6.M_{22}&  7& 18&  6&  7,8&    1\\
12.M_{22}& 7& 36& 12&  9,10,11,12&  2\\  \cline{2-6}
M_{22}&   11&  5&  1&  1&      3\\
2.M_{22}& 11& 10&  2&  2&      3\\
3.M_{22}& 11& 15&  3&  3,4&    4\\
4.M_{22}& 11& 20&  4&  5,6&    4\\
6.M_{22}& 11& 30&  6&  7,8&    2\\
12.M_{22}& 11& 60& 12& 9,10&   2\\
12.M_{22}& 11&  &   &  11,12&  4\\
\hline \hline
J_{2}&    7&   6&  1&  1&      2\\
2.J_{2}&  7&  12&  2&  2&      4 \\
\hline \hline
M_{23}&   5&  4& 1&  1&       4\\   \cline{2-6}
M_{23}&   7&  8& 2&  1,2&     1\\   \cline{2-6}
M_{23}&  11&  5& 1&  1&       3\\   \cline{2-6}
M_{23}&  23& 11& 1&  1&      5\\
\hline
\end{array}\]
\end{table}

\begin{table}[htbp]
\vskip -1pt
\[\begin{array}{|r|r|r|c|c|c|}
\hline
 G& p& |X(H)|& |X(H)|/e& \text{block } \bB&  \lse(\bB)\\
\hline \hline
HS&       7&  6& 1&  1&       2\\
2.HS&     7& 12& 2&  2&       2\\   \cline{2-6}
HS&      11&  5& 1&  1&       1\\
2.HS&    11& 10& 2&  2&       2\\
\hline \hline
J_{3}&    5&  4& 2&  1,3&    1\\
3.J_{3}&  5& 12& 6&  4,5,6,7&  1\\ \cline{2-6}
J_{3}&   17&  8& 1&  1&      2 \\
3.J_{3}& 17& 24& 3& 2,3&     6\\  \cline{2-6}
J_{3}&   19&  9& 1&  1&      1 \\
3.J_{3}& 19& 27& 3& 2,3&     6 \\
\hline \hline
M_{24}&   5&  4& 1&  1&      2\\   \cline{2-6}
M_{24}&   7&  6& 2&  1,3&    1\\   \cline{2-6}
M_{24}&  11& 10& 1&  1&     6\\   \cline{2-6}
M_{24}&  23& 11& 1&  1&      7\\   \cline{2-6}
\hline \hline
McL&      7&  6& 2&  1,2&    2 \\
3.McL&    7& 18& 6& 3,4&     1\\
3.McL&    7&   &  & 5,6&     2\\  \cline{2-6}
McL&     11&  5& 1&  1&      1\\
3.McL&   11& 15& 3& 2,3&     1\\
\hline \hline
He&      17&  8& 1&  1&      1\\
\hline \hline
Ru&       7&  6& 1&  1&       2\\  \cline{2-6}
Ru&      13& 12& 1&  1&     4\\  \cline{2-6}
Ru&      29& 14& 1&  1&      4\\
2.Ru&    29& 28& 2&  3&      9 \\
\hline \hline
Suz&      7&  6& 1&  1&       2\\
3.Suz&    7&   &  &  8,9&     2\\     \cline{2-6}
Suz&     11& 10& 1&  1&      2\\
2.Suz&   11& 20& 2&  2&      4\\
3.Suz&   11& 30& 3&  3,4&     6\\
6.Suz&   11& 60& 6&  5,6&     6\\ \cline{2-6}
Suz&     13&  6& 1&  1&        1\\
2.Suz&   13& 12& 2&  2&        1\\
3.Suz&   13& 18& 3&  3,4&      1\\
6.Suz&   13& 36& 6&  5,6&      3\\
\hline \hline
ON&       5&  8& 2&  1&      2 \\
ON&       5&   &  &  2&       4\\ \cline{2-6}
ON&      11& 10& 1&  1&     2\\
3.ON&    11& 30& 3&  2,3&   4\\ \cline{2-6}
ON&      19&  6& 1&  1&      2\\
3.ON&    19& 18& 3&  2,3&   3\\ \cline{2-6}
ON&      31& 15& 1&  1&     1\\
3.ON&    31& 45& 3&  2,3&   8\\
 \hline
\end{array}\]
\end{table}

\begin{table}[htbp]
\vskip -1pt
\[\begin{array}{|r|r|r|c|c|c|}
\hline
 G& p& |X(H)|& |X(H)|/e& \text{block } \bB&  \lse(\bB)\cr
\hline \hline
Co_3&     7& 12& 2&  1&      4\\
Co_3&     7&   &  &  3&      2\\  \cline{2-6}
Co_3&    11& 10& 2&  1,2&    1\\  \cline{2-6}
Co_3&    23& 11& 1&  1&     5\\
\hline \hline
Co_2&     7& 12& 2&  1,3&     2\\ \cline{2-6}
Co_2&    11& 10& 1&  1&      4\\ \cline{2-6}
Co_2&    23& 11& 1&  1&      5\\
\hline \hline
Fi_{22}&    7& 12& 2&  1,3&     2\\
2.Fi_{22}&  7& 24& 4&  5,6&     2\\
3.Fi_{22}&  7& 36& 6&  7,8,9,10&  2\\
6.Fi_{22}&  7& 72& 12& 13,14,15,16&  2\\  \cline{2-6}
Fi_{22}&   11& 10& 2&  1,2&     1\\
2.Fi_{22}& 11& 20& 4&  3,4&     1\\
3.Fi_{22}& 11& 30& 6&  5,6,7,8&   2\\
6.Fi_{22}& 11& 60& 12& 9,10,11,12&  2 \\ \cline{2-6}
Fi_{22}&   13&  6& 1&  1&       1\\
2.Fi_{22}& 13& 12& 2&  2&       4\\
3.Fi_{22}& 13& 18& 3&  3,4&     1\\
6.Fi_{22}& 13& 36& 6&  5,6&     2\\
\hline \hline
HN&         7&  6& 1&  1&      2\\ \cline{2-6}
HN&        11& 20& 2&  1&     2\\
HN&        11&   &  &  2&     4\\ \cline{2-6}
HN&        19&  9& 1&  1&      1\\
\hline \hline
Ly&         7&  6& 1&  1&        2\\ \cline{2-6}
Ly&        11& 10& 2&  1,2&      1\\ \cline{2-6}
Ly&        31&  6& 1&  1&        4\\ \cline{2-6}
Ly&        37& 18& 1&  1&       6\\ \cline{2-6}
Ly&        67& 22& 1&  1&       6\\
\hline \hline
Th&        13& 12& 1&  1&      8\\   \cline{2-6}
Th&        19& 18& 1&  1&       10\\   \cline{2-6}
Th&        31& 15& 1&  1&       11\\
\hline \hline
Fi_{23}&     7& 12& 2&  1,4&      2\\ \cline{2-6}
Fi_{23}&    11& 20& 2&  1&   2\\
Fi_{23}&    11&   &  &  3&   4\\ \cline{2-6}
Fi_{23}&    13& 12& 2&  1,3&    2\\ \cline{2-6}
Fi_{23}&    17& 16& 1&  1&   4\\ \cline{2-6}
Fi_{23}&    23& 11& 1&  1&   1\\
\hline \hline
Co_1&       11& 20& 2&  1&   2\\
Co_1&       11&   &  &  2&   4\\ \cline{2-6}
Co_1&       13& 12& 1&  1&   4\\ \cline{2-6}
Co_1&       23& 11& 1&  1&   1\\
2.Co_1&     23& 22& 2&  2&   5\\
 \hline
\end{array}\]
\end{table}

\begin{table}[htbp]
\[\begin{array}{|r|r|r|c|c|c|}
\hline
 G& p& |X(H)|& |X(H)|/e& \text{block } \bB& \lse(\bB)\cr
\hline \hline
J_4&         5&  4& 1&  1&         2 \\ \cline{2-6}
J_4&         7&  6& 2&  1&         1 \\
J_4&         7&   &  &  2&         1 \\ \cline{2-6}
J_4&        23& 22& 1&  1&       10 \\ \cline{2-6}
J_4&        29& 28& 1&  1&       10 \\ \cline{2-6}
J_4&        31& 10& 1&  1&        2 \\ \cline{2-6}
J_4&        37& 12& 1&  1&        6 \\ \cline{2-6}
J_4&        43& 14& 1&  1&        8 \\ \cline{2-6}
\hline \hline
Fi_{24}'&   11& 10& 1&  1&        2 \\
3.F_{24}'&  11& 30& 3&  8,9&      4 \\   \cline{2-6}
Fi_{24}'&   13& 24& 2&  1,4&      4 \\  \cline{2-6}
Fi_{24}'&   17& 16& 1&  1&        2 \\
3.Fi_{24}'& 17& 48& 3&  2,3&      6 \\   \cline{2-6}
Fi_{24}'&   23& 11& 1&  1&        1  \\
3.Fi_{24}'& 23& 33& 3&  2,3&      5 \\  \cline{2-6}
Fi_{24}'&   29& 14& 1&  1&        3  \\
3.Fi_{24}'& 29& 42& 3&  2,3&      5 \\
\hline \hline
B&          11& 20& 2&  1,7&      2  \\   \cline{2-6}
B&          13& 24& 2&  1,5&      2  \\   \cline{2-6}
B&          17& 32& 2&  1&        6  \\
B&          17&   &  &  3&        4  \\   \cline{2-6}
B&          19& 36& 2&  1&       4 \\
B&          19&   &  &  2&        6 \\   \cline{2-6}
B&          23& 22& 2&  1,2&      1   \\
2.B&        23& 44& 4&  3&        5   \\
2.B&        23&   &  &  4&        7   \\  \cline{2-6}
B&          31& 15& 1&  1&        1 \\
2.B&        31& 30& 2&  2&        9 \\  \cline{2-6}
B&          47& 23& 1&  1&        5 \\
2.B&        47& 46& 2&  2&       \ge3 \\
\hline \hline
M&          17& 16& 1&  1&        6  \\ \cline{2-6}
M&          19& 18& 1&  1&        6  \\ \cline{2-6}
M&          23& 22& 2&  1&        5  \\
M&          23&   &  &  5&        1 \\ \cline{2-6}
M&          29& 28& 1&  1&        \ge10 \\ \cline{2-6}
M&          31& 30& 2&  1&        3  \\
M&          31&   &  &  3&        5  \\   \cline{2-6}
M&          41& 40& 1&  1&      \ge8 \\   \cline{2-6}
M&          47& 46& 2&  1&      \ge3 \\
M&          47&   &  &  2&        9\\   \cline{2-6}
M&          59& 29& 1&  1&      \ge15 \\  \cline{2-6}
M&          71& 35& 1&  1&      \ge7 \\
\hline
\end{array}\]
\end{table}


\end{document}